\DeclareRobustCommand{\rchi}{{\mathpalette\irchi\relax}}
\newcommand{\irchi}[2]{\raisebox{\depth}{$#1\chi$}} % inner command, used by \rchi
\renewcommand{\Set}[1]{\left\{#1\right\}}
\definecolor{antiquewhite}{rgb}{0.98, 0.92, 0.84}
\title[Real Spectral Triples on Crossed Products]{Real Spectral Triples on Crossed Products}
\author{Alessandro Rubin and Ludwik D\k{a}browski}
\date{}
\email{alrubin@sissa.it, dabrowski@sissa.it }
\address{Scuola Internazionale Superiore di Studi Avanzati (SISSA), via Bonomea 265, I-34136 Trieste}
\keywords{}
\thanks{}
\subjclass[]{}
\numberwithin{equation}{section} %% needs `amsmath' package
\theoremstyle{plain} %% needs `amsmath' package
\newtheorem{thm}{Theorem}[section]
\newtheorem{lemma}[thm]{Lemma}
\newtheorem{prop}[thm]{Proposition}
\newtheorem{ass}{Assumptions}
\theoremstyle{definition} %% needs `amsmath' package
\newtheorem{defn}[thm]{Definition}
\newtheorem{exa}[thm]{Example}
\newtheorem*{conv}{Conventions}
\newtheorem*{ack}{Acknowledgments}
\theoremstyle{remark} %% needs `amsmath' package
\newtheorem{rmk}[thm]{Remark}
\DeclareMathOperator{\Dom}{Dom}   %% domain of an operator
\DeclareMathOperator{\linspan}{span} %% linear span
\newcommand{\C}{\mathbb{C}}   %% complex numbers
\newcommand{\norm}[1]{\left\lVert#1\right\rVert}
\newcommand\xqed[1]{%
	\leavevmode\unskip\penalty9999 \hbox{}\nobreak\hfill
	\quad\hbox{#1}}
\newcommand\demo{\xqed{$\Box$}}
\definecolor{darkgreen}{cmyk}{1,0,1,.2}
\definecolor{m}{rgb}{1,0.1,1}
\definecolor{green}{cmyk}{1,0,1,0}
\definecolor{darkred}{rgb}{0.55, 0.0, 0.0}
\definecolor{test}{rgb}{1,0,0}
\definecolor{cmyk}{cmyk}{0,1,1,0}
\begin{document}

\begin{abstract}
Given a spectral triple on a unital $C^{*}$-algebra $A$ and an equicontinuous action of a discrete group $G$ on $A$, a spectral triple on the reduced crossed product $C^{*}$-algebra $A\rtimes_{r} G$ was constructed by Hawkins, Skalski, White and Zacharias in \cite{hawkins2013spectral},  extending the 
construction by Belissard, Marcolli and Reihani in \cite{bellissard2010dynamical}, by using the Kasparov product to make an ansatz for the Dirac operator. Supposing that the triple on $A$ is equivariant for an action of $G$, we show that the triple on $A\rtimes_rG$ is equivariant for the dual coaction of $G$. If moreover an equivariant real structure $J$ is given for the triple on $A$, we give constructions for two inequivalent real structures on the triple $A\rtimes_rG$. We compute the KO-dimension with respect to each real structure in terms of the KO-dimension of $J$ and show that the first and the second order conditions are preserved. Lastly, we characterise an equivariant orientation cycle on the triple on $A\rtimes_rG$ coming from an equivariant orientation cycle on the triple on $A$. We show, along the paper, that our constructions generalize the respective constructions of the equivariant spectral triple on the noncommutative $2$-torus.

\end{abstract}

\maketitle

\tableofcontents
%
%
%\parindent=0pt
%\parskip=6pt
%\parindent=0pt

%%%%%%%%%%%%%%%%%%%%%%%%%%%%%%%%%%%%%%%%%%%%%%%%%%%%%%%%%%%%%%%%%%%%%%%%%%%%%%%%%
\section{Introduction}
The notion of a ``spectral triple'' (also known as unbounded Fredholm module, or unbounded K-cycle) was introduced by Alain Connes in the course of studying a noncommutative generalization of the Atiyah-Singer index theorem. Its prototype is given by the commutative ${*}$-algebra $C^\infty(M)$ of smooth  
functions on a compact spin manifold $M$ and the Dirac operator on the Hilbert space of square-integrable spinors. Under a few additional assumptions, any commutative spectral triple must be of this form and so it is possible to recover the original manifold from these data \cite{connes1996gravity,connes2013spectral}. 
So far, most research has focused on investigating the properties of particular known spectral triples. However, despite their importance and extensive  study, it is not yet fully understood under what conditions it is possible to define a spectral triple on a given $C^{*}$-algebra; various examples have been constructed only for some specific classes of $C^{*}$-algebras, for instance matrix algebras \cite{paschke1998discrete,krajewski1998classification}, group $C^{*}$-algebras of discrete  hyperbolic  groups, $AF$-algebras, algebras arising as $q$-deformations of the function algebra of Lie groups (see references in \cite{hawkins2013spectral}) and algebras with the ergodic action of a compact Lie group \cite{gabriel2013ergodic}. Spectral triples with nontrivial K-homological content have also been constructed on Cuntz-Krieger algebras \cite{goffeng15Cuntz-Krieger}, boundary crossed products by hyperbolic isometry groups \cite{mesland20Hecke} and groupoid $C^{*}$-algebras \cite{Meslandgroupoid}.

In this respect, spectral triples on crossed products $A\rtimes_{r} G$ proffer new noncommutative  examples (even when $A$ and $G$ are abelian) as they can be regarded as noncommutative quotient spaces 
(see e.g., \cite[Chapter 2]{khalkhali2009basic}).

A spectral triple on the crossed product was first introduced in \cite{bellissard2010dynamical} starting from the equicontinuous action of $G=\mathbb{Z}$ on a spectral triple $(\mathcal{A},H,D)$ on a $C^{*}$-algebra
$A$. Therein 
the question studied was how the Dirac operator $D$ encodes the metric properties of a noncommutative space, in order to define  in particular a noncommutative analogue of a (compact) complete metric space (see e.g., \cite{connes1989compact},\cite{rieffel2004compact}).
This  work was generalised in \cite{hawkins2013spectral}, using as building blocks the spectral triples \cite{connes1989compact} on the reduced group $C^{*}$-algebra $C^{*}_{r}(G)$, where the group $G$ is discrete and endowed with a length-type function, and which is then assumed to act smoothly and equicontinuously on the spectral triple $(\mathcal{A},H,D)$. 
As pointed out in \cite{hawkins2013spectral}, the key idea is to use (a representative for the) external unbounded Kasparov product  to produce a spectral triple on the tensor product $A\otimes C^{*}_{r}(G)$ and then check under which conditions the same formula still defines a triple $(C_{c}(G,\mathcal{A}),\widehat{H},\widehat{D})$ on the crossed product. However, the rationale why this construction should bear any relation to the (external) Kasparov product is so far unclear.  Besides the structure of a quantum metric space, 
	in \cite{hawkins2013spectral} it is proved  that the summability of spectral triples is preserved (under some additional assumptions)  under the passage from 
	$(\mathcal{A},H,D)$ to $(C_{c}(G,\mathcal{A}), \widehat{H}, \widehat{D})$,
	and that the {\em dimension} is additive. It is also shown that the non-degeneracy of the triple is maintained (see Definition \ref{90}). \\
	
 In this paper we consider the construction given in  \cite{hawkins2013spectral} and address a few other so-called {\em axioms} of noncommutative manifolds formulated by A. Connes \cite{connes1996gravity}: our main new contributions concern compatibility with real structures, first and second order conditions and orientation cycles.
They are essential in the reconstruction of the geometric data of commutative spectral triples \cite{connes2013spectral}, and for the theory of gauge transformations and gauge fields in case of almost-commutative spectral triples, 
including, e.g., the spectral version of the Standard Model of fundamental particles and their interactions in physics, cf. \cite{connes2019noncommutative}. It is also worth to mention that a spectral triple equipped with the reality structure defines a real unbounded KO-cycle, which couples to the real KO-theory with a finer periodicity modulo 8, rather than modulo 2 in the usual (complex) case.
Our guide example is the spectral triple on the noncommutative $2$-torus \cite{rieffel1981c,gracia2013elements}, regarded as the crossed product $C(S^{1})\rtimes \mathbb{Z}$. We show that our constructions for the real structures and the orientation cycles generalize the usual ones on the noncommutative torus. 

We work under the assumption that
the starting triple $(\mathcal{A},H,D)$ is equivariant with respect to a unitary representation 
$G\ni g\mapsto u_g\in\mathcal{L}(H)$. In this case, Remark\,29 in \cite{hawkins2013spectral},
to which we add a few details,
brings forth another unitarily equivalent spectral triple 
on $A\rtimes_{\alpha,r}G$, which thus has the same KK-class. 
We also assume that $G$ acts by isometries, i.e., the operators $u_g$ commute with the Dirac operator $D$,
and thus the two spectral triples have the same Dirac operator 
$\widehat{D}$ and only different representations of the algebra $A\rtimes_{r}G$ on the Hilbert space $\widehat{H}$.\\

We start in Section \ref{sec1} with some preliminary material on spectral triples, their products and relation to K-homology, real structures  and provide some examples on discrete groups.

In Section \ref{48} we mostly recall the construction developed in \cite{hawkins2013spectral}, both in the so called equivariant and non-equivariant settings, and relate it to Kasparov's bivariant K-theory \cite{kasparov1980operator, gg1988equivariant}. In particular, we prove that the unsatisfactory relation with the external Kasparov product can be better explained through the internal Kasparov product under suitable isomorphisms. 
At the end, we show that $(C_{c}(G,\mathcal{A}),\widehat{H},\widehat{D})$ 
is equivariant under the dual coaction of $G$ (or, more precisely, the coaction of the group algebra $\C G$ which is in fact a Hopf algebra). 

Sections \ref{J} and \ref{c} contain our main results.
In Section \ref{J} we first construct a real structure $\widehat{J}$ on $(C_{c}(G,\mathcal{A}),\widehat{H}, \widehat{D})$ provided $(\mathcal{A},H,D)$ admits a real structure $J$ such that $Ju_{g}=u_{g}J$ for any $g\in G$. If $G$ is abelian, we construct a second inequivalent real structure $\widetilde{J}$ provided $Ju_{g}=u_{g}^{*}J$ for any $g\in G$. 
We show that in both cases we can consider the relationship between $J$ and $u$ as the equivariance of $J$ with respect to the action of $\mathbb{C}G$ endowed with a suitable $*$-structure, and find that both $\widehat{J}$ and $\widetilde{J}$ are equivariant for the dual coaction of $G$. 
Furthermore, we compute the KO-dimension of $\widehat{J}$ and 
$\widetilde{J}$ in terms of the KO-dimension of $J$, 
and show  that the first and the second order conditions are preserved by the crossed product for both of them,  under suitable conditions. 
Lastly, in Section \ref{c}, using a suitably twisted shuffle product between Hochschild cycles, we induce an equivariant orientation cycle on $(C_{c}(G,\mathcal{A}),\widehat{H}, \widehat{D})$  from an equivariant orientation cycle on $(\mathcal{A},H,D)$.
	
	\begin{conv}
		All Hilbert spaces and $C^{*}$-algebras in this paper are assumed to be separable  and algebras are assumed to be complex and unital. Furthermore, every  group $G$ is assumed to be topological with a locally compact second-countable topology. 
	\end{conv}
	
	\begin{ack} The authors would  like to thank the referees for their  helpful and detailed comments
and suggestions for improvements.
They are also very grateful to P. Antonini for 
several useful comments and to A. Magee for reading the manuscript and helpful remarks.  A.R. thanks A. Magee also for numerous discussions.
	\end{ack}

%%%%%%%%%%%%%%%%%%%%%%%%%%%%%%%%%%%%%%%%%%%%%%%%%%%%%%%%%%%%%%%%%%%%%%%%%%%%%%%%%
\section{Preliminaries: Spectral Triples and Real Structures}\label{sec1}

In this section we recall some basic well known definitions, facts and examples about spectral triples and real structures. For more details we refer to \cite{connes1994noncommutative,gracia2013elements,connes1995noncommutative,connes1996gravity,connes2013spectral,connes2019noncommutative}.

\begin{defn}
An \textit{odd spectral triple} $(\mathcal{A}, H, D)$ on a unital $C^{*}$-algebra $A$ consists of a dense $*$-subalgebra $\mathcal{A}\subseteq A$ represented  by $\pi\colon A\rightarrow \mathcal{L}(H)$ on a Hilbert space $H$ and a self-adjoint operator $D$ (called a \emph{Dirac operator})
densely defined on $\Dom{D}\subset H$  such that 
$(1+D^2)^{-\frac{1}{2}}$ is compact, 
$\pi(a)(\Dom{D})\subseteq \Dom{D}$ and the commutator
$[D,\pi(a)]$ extends to a bounded operator on $H$ for every  
$a\in \mathcal{A}$.
\end{defn}

If the representation $\pi$ is not clear from the context, we will use the notation  $(A, H, D, \pi)$. 
Note that the largest algebra $\mathcal{A}$ with the properties as above is the \emph{Lipschitz algebra} 
$C^{Lip}(A)$, which is a Banach $*$-algebra complete in the norm $\norm{a}_{1} = \norm{a} + \norm{[D,\pi(a)]}$
\cite[Lemma 1]{bellissard2010dynamical}. 
\begin{defn}\label{90}
A spectral triple $(\mathcal{A}, H, D)$ on a unital $C^{*}$-algebra $A$ is called  \emph{non-degenerate} when it is the case that the representation $\pi$ is faithful and $[D, \pi(a)]=0$ for $a\in \mathcal{A}$,
if and only if $a\in \mathbb{C}1_{A}$. 
\end{defn} 

An \emph{even} spectral triple on $A$ is given by the same data with the addition of a $\mathbb{Z}_{2}$-grading, namely a self-adjoint operator $\rchi \colon H\rightarrow H$ called a \emph{grading operator} such that
$\chi^{2}=1_{H}$,
$\pi(a)\rchi = \rchi \pi(a)$ for all $a\in A$, 
$\rchi(\Dom{D})\subseteq \Dom{D}$ and $D\rchi = -\rchi D$.

In the case of an even spectral triple, it is always possible to fix a basis of $H$ in such a way that $H= H_{0}\oplus H_{1}$  and
\begin{displaymath}
\rchi=\left(\begin{matrix}
1 & 0 \\
0 & -1
\end{matrix}\right),\qquad 
\pi =  \left(\begin{matrix}
\pi_{0} & 0 \\
0 & \pi_{1}
\end{matrix}\right), \qquad D= \left(\begin{matrix}
0 & D_{0} \\
D_{0}^{*} & 0
\end{matrix}\right).
\end{displaymath}
Sometimes it is useful to think of odd spectral triples as even triples with the grading $\rchi = \textup{id}_{H}$. In
	this way, it is possible to consider the two situations at the same time.

Generalizing the charge conjugation operator on Dirac  spinors on $\textup{spin}_{c}$ manifolds, A.\,Connes  introduced the following notion:
\begin{defn}[cf.\cite{connes2019noncommutative,connes1996gravity}]\label{67}
	A \emph{real structure} for an (even or odd) spectral triple $(A,H,D, \pi,\rchi)$ is an anti-linear isometry $J\colon H \rightarrow H$ such that 
	\begin{enumerate}
		\item $	\left[\pi(a), J\pi(b)J^{-1}\right] =0$ for all $a,b\in A$ (\emph{zeroth order condition})  
		\item there are signs $\varepsilon, \varepsilon^{\prime} ,\varepsilon^{\prime\prime}=\pm 1$ for which
		\begin{displaymath}
			J^{2} = \varepsilon  \qquad DJ = \varepsilon^{\prime} JD \qquad 		\rchi J = \varepsilon^{\prime\prime} J\rchi .
		\end{displaymath}
	\end{enumerate}
	In this case $(A,H,D,J)$ will be called a \emph{real spectral triple}. 
\end{defn}
There are eight possible triples of signs 
$(\varepsilon, \varepsilon^{\prime}, \varepsilon^{\prime\prime})$ and they determine the so called \emph{KO-dimension} $n\in \mathbb{Z}_{8}$ of the real spectral triple according to the following table\footnote{If $n$ is even,  $J'=J\rchi$ can be used as another real structure with 
the new signs $(\varepsilon\varepsilon^{\prime\prime}, 
-\varepsilon^{\prime}, \varepsilon^{\prime\prime})$
\cite{dkabrowski2011product}.}:
\begin{table}[h]
	\centering
	\begin{tabular}{c|cccccccc}
		\toprule
		$ n $                 &$0$   & $1$   & $2$   & $3$   & $4$ & $5$ &$6$  &$7$    \\ 
		\midrule
		$\varepsilon$                  & $+1$  & $+1$ &  $-1$ & $-1$  &$-1$ &$-1$ &$+1$ &$+1$  \\ 
		$\varepsilon^{\prime}$         &  $+1$ & $-1$  & $+1$  & $+1$  &$+1$ & $-1$& $+1$& $+1$ \\ 
		$\varepsilon^{\prime\prime}$   &$+1$   &     & $-1$  &       & $+1$&      &$-1$ &     \\
		\bottomrule
	\end{tabular}
	%\caption{Dependence of $(\varepsilon, \varepsilon^{\prime}, \varepsilon^{\prime\prime})$ on the KO-dimension.}	
	%\label{145}
\end{table}

Accordingly, even triples have even KO-dimension and odd triples have odd KO-dimension. The  \emph{zeroth order condition} transforms the Hilbert space $H$ into a bimodule over $A$ thanks to the right action 
\begin{displaymath}
	\psi\cdot b = J\pi(b^{*})J^{-1}\psi,  \qquad \psi\in H, b\in A.
\end{displaymath}
Furthermore, the following property is the noncommuative analogue of requiring D to be a first order differential operator.

\begin{defn}[cf.\cite{connes1995noncommutative}]
A real spectral triple $(\mathcal{A},H,D,J)$ on a unital $C^{*}$-algebra $A$ satisfies  the \emph{first order condition} if
	\begin{displaymath}
		\left[\left[D,\pi(a)\right], J\pi(b)J^{-1}\right] = 0.
	\end{displaymath}
for every $a,b\in A$.
	 \end{defn} 
Motivated by the properties of the real structure operator on the spectral triple of the noncommutative Standard Model of particle physics, we consider also the following property.
\begin{defn}[cf.\cite{boyle2014non}]
A real spectral triple $(\mathcal{A},H,D,J)$ on a unital $C^{*}$-algebra $A$ satisfies  the \emph{second order condition} if 
	\begin{displaymath}
	\left[\left[D,\pi(a)\right], J[D,\pi(b)]J^{-1}\right] = 0.
\end{displaymath} 
for every $a,b\in A$.
\end{defn} 

\subsection{Products of Spectral Triples}\indent 

An even or odd spectral triple $(\mathcal{A},H,D)$ over a unital $C^{*}$-algebra $A$ canonically defines a Fredholm module $(A,H,\mathfrak{b}(D))$, where 
\begin{displaymath}
	\mathfrak{b}(D) = D(1+D^{2})^{-1}
\end{displaymath}
is a bounded operator on $H$ called the \emph{bounded transform} of $D$ \cite{connes1994noncommutative}. In particular, the bounded transform induces a map from the space of spectral triples on $A$ to the K-homology group $K^{\bullet}(A)$; this map turns out to be surjective \cite{baaj1983theorie} and so any K-homology class admits an unbounded representative.  The external Kasparov product of two Fredholm modules can be described in terms of their unbounded representatives as follows.

 Given two odd spectral triples $(\mathcal{A}, H_{A}, D_{A})$ and $(\mathcal{B},H_{B}, D_{B})$ on $C^{*}$-algebras $A$ and $B$, let $\mathcal{A}\odot \mathcal{B}$ denote the algebraic tensor product of $\mathcal{A}$ and $\mathcal{B}$. 
Set $H=H_{A}\otimes H_{B}$ and define the operator $D$ on $H\oplus H$ by
\begin{equation}
D=\left(\begin{matrix}
0 & D_{A}\otimes 1 - i\otimes D_{B} \\
 D_{A}\otimes 1 + i\otimes D_{B} & 0
\end{matrix}\right).
\end{equation}
\begin{prop}[cf.\cite{connes1994noncommutative}, Chapter 4.A]\label{prop1}
The triple	$(\mathcal{A}\odot \mathcal{B}, H\oplus H, D)$ is an even spectral triple on $A\otimes_{min} B$ and represents the external Kasparov product of $[(\mathcal{A}, H_{A}, D_{A})]\in KK_{1}(A,\mathbb{C})$ with  $[(\mathcal{B}, H_{B}, D_{B})]\in KK_{1}(B,\mathbb{C})$.  
\end{prop}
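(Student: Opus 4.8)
The plan is to verify directly the three defining conditions of an even spectral triple for $(\mathcal{A}\odot\mathcal{B}, H\oplus H, D)$ and then identify the resulting $K$-homology class with the external Kasparov product. First I would record the grading: on $H\oplus H$ set $\rchi = \left(\begin{smallmatrix} 1 & 0 \\ 0 & -1 \end{smallmatrix}\right)$, and represent $a\odot b \in \mathcal{A}\odot\mathcal{B}$ diagonally by $\pi(a)\otimes\pi(b)$ on each copy of $H$; this manifestly commutes with $\rchi$ and extends to a representation of $A\otimes_{\min}B$ since the algebraic tensor product is dense. I would check self-adjointness of $D$ by writing $D_+ := D_A\otimes 1 + i\otimes D_B$ and $D_- := D_A\otimes 1 - i\otimes D_B$, noting $D_-=D_+^*$ on the natural common core $\Dom(D_A)\odot\Dom(D_B)$ (the two summands $D_A\otimes 1$ and $1\otimes D_B$ are each essentially self-adjoint and strongly anticommute after the $i$-twist, which is exactly the point of the off-diagonal form), so that $D$ is self-adjoint and anticommutes with $\rchi$ by construction.

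Next I would establish compact resolvent. Since $D_A\otimes 1$ and $i\otimes D_B$ anticommute in the relevant sense, $D_\pm^* D_\pm = D_A^2\otimes 1 + 1\otimes D_B^2$, so
\begin{displaymath}
(1+D^2)^{-1/2} = \begin{pmatrix} (1 + D_A^2\otimes 1 + 1\otimes D_B^2)^{-1/2} & 0 \\ 0 & (1 + D_A^2\otimes 1 + 1\otimes D_B^2)^{-1/2} \end{pmatrix}.
\end{displaymath}
The key step here is to deduce compactness of $(1 + D_A^2\otimes 1 + 1\otimes D_B^2)^{-1/2}$ from compactness of $(1+D_A^2)^{-1/2}$ and $(1+D_B^2)^{-1/2}$. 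I would argue this spectrally: each factor has a diagonalizing decomposition with eigenvalues tending to infinity, so the tensor-sum operator $D_A^2\otimes 1 + 1\otimes D_B^2$ has eigenvalues $\lambda_m + \mu_n \to \infty$ with finite multiplicities, whence its resolvent is compact. Then I would verify that $[D,\pi(a\odot b)]$ is bounded: the commutator is built from $[D_A,\pi(a)]\otimes\pi(b)$ and $\pi(a)\otimes[D_B,\pi(b)]$ (with the appropriate $\pm i$ signs in the off-diagonal entries), each bounded by the spectral-triple hypotheses on $A$ and $B$, and one checks the domain-invariance $\pi(a\odot b)(\Dom D)\subseteq\Dom D$ on the core.

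The main obstacle is the identification with the Kasparov product rather than the verification of the axioms. For this I would invoke the standard description of the unbounded external product: on the interior tensor product Hilbert module, the product of the classes $[(\mathcal{A},H_A,D_A)]$ and $[(\mathcal{B},H_B,D_B)]$ is represented by an operator of precisely the off-diagonal Dirac form $D_A\hatox 1 + \rchi_A\hatox D_B$ in the graded sense, and the passage from the odd–odd case to the even product is exactly the doubling $H\oplus H$ with the $2\times2$ matrix above, the two copies of $H$ playing the role of the two homogeneous components. I would reference the construction in \cite{connes1995noncommutative}, Chapter 4.A, and verify that Kucerovsky's or Baaj–Julg's criterion is met—namely that $D$ is a connection for $D_B$ and that the graded commutator $[D, D_A\otimes 1]$ is bounded below modulo bounded operators on $\Dom(D_A)\odot\Dom(D_B)$—so that $D$ genuinely represents the product class in $KK_0(A\otimes_{\min}B,\mathbb{C}) = K^0(A\otimes_{\min}B)$. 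Since both input triples are odd, the product lands in even $KK$-degree, consistent with the even triple we have constructed.
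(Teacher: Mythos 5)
The paper does not actually prove this proposition: it is imported from Connes' book (Chapter 4.A of the cited reference) and used as a black box, so there is no in-paper argument to compare against. Your verification is the standard one and is correct in outline: the diagonal algebra representation, the grading, the anticommutation $D\rchi=-\rchi D$, the cancellation of cross terms giving $D^{2}=(D_{A}^{2}\otimes 1+1\otimes D_{B}^{2})\oplus(D_{A}^{2}\otimes 1+1\otimes D_{B}^{2})$ (which, as you indicate, is exactly the anticommutation of $D_{A}\otimes 1\otimes\sigma_{1}$ with $1\otimes D_{B}\otimes\sigma_{2}$ forced by $\sigma_{1}\sigma_{2}=-\sigma_{2}\sigma_{1}$, even though $D_{A}\otimes 1$ and $1\otimes D_{B}$ themselves commute), the spectral argument for compactness of the resolvent of the tensor-sum, and the boundedness of $[D,\pi_{A}(a)\otimes\pi_{B}(b)]$ are all as they should be. Two points would need fleshing out in a complete write-up. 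First, essential self-adjointness of $D$ on the core $\Dom(D_{A})\odot\Dom(D_{B})$ is asserted rather than proved; the clean route is to diagonalise the strongly commuting pair $D_{A}\otimes 1$, $1\otimes D_{B}$ and work fibrewise, or to invoke the general result on sums of anticommuting self-adjoint operators. Second, for the identification of the $KK$-class, it is worth noting that the product here is \emph{external} (over $\mathbb{C}$), so the connection condition in Kucerovsky's criterion is vacuous and only the positivity condition needs checking; it holds on the nose because the graded anticommutator of $D$ with $D_{A}\otimes 1\otimes\sigma_{1}$ equals $2D_{A}^{2}\otimes 1\otimes 1\geq 0$. Neither point is a gap in the mathematics, only in the level of detail.
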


Analogously, 
given an even spectral triple $\left(\mathcal{A}, H_{A,0}\oplus H_{A,1}, D_{A}= \left(\begin{matrix}
0 & D_{0} \\
D_{0}^{*} & 0
\end{matrix}\right)\right)$ on $A$ and an odd spectral triple $\left(\mathcal{B},H_{B}, D_{B}\right)$ on $B$, set $H=(H_{A,0}\otimes H_{B})\oplus (H_{A,1}\otimes H_{B})$ and define the operator  $D$ on $H$ by
\begin{equation}
D=\left(\begin{matrix}
1\otimes D_{B} & D_{0}\otimes 1  \\
D_{0}^{*}\otimes 1  & -1\otimes D_{B}
\end{matrix}\right).
\end{equation}
\begin{prop}[cf.\cite{higson2000analytic}, Chapter 9]\label{prop}
The triple	$(\mathcal{A}\odot \mathcal{B}, H, D)$ is an odd spectral triple on $A\otimes_{min} B$ and represents the external Kasparov product of $[(\mathcal{A}, H_{A}, D_{A})]\in KK_{0}(A,\mathbb{C})$ with  $[(\mathcal{B}, H_{B}, D_{B})]\in KK_{1}(B,\mathbb{C})$.   
\end{prop}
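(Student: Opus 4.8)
The plan is to verify the three analytic axioms of an odd spectral triple and then to identify the resulting cycle with the standard representative of the external Kasparov product. First I would rewrite the Dirac operator in the coordinate-free form $D = D_A\otimes 1 + \gamma\otimes D_B$, where $\gamma = \diag(1,-1)$ is the grading of $H_A = H_{A,0}\oplus H_{A,1}$; expanding the matrix product recovers the displayed formula. The two summands anticommute: since $D_A$ is odd it anticommutes with $\gamma$, so $(D_A\otimes 1)(\gamma\otimes D_B) = D_A\gamma\otimes D_B = -\gamma D_A\otimes D_B = -(\gamma\otimes D_B)(D_A\otimes 1)$. Consequently $D^2 = (D_A\otimes 1)^2 + (\gamma\otimes D_B)^2 = D_A^2\otimes 1 + 1\otimes D_B^2$, which is the key identity driving the rest.

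For the spectral triple axioms, self-adjointness of $D$ follows from the fact that $D_A\otimes 1$ and $\gamma\otimes D_B$ are self-adjoint, anticommuting operators admitting a common core (the algebraic tensor product of the smooth domains), so that their sum is essentially self-adjoint; this is the standard analytic input of the external product and I would simply invoke it. The resolvent estimate reduces, via the identity for $D^2$, to the elementary lemma that if $T_1, T_2 \geq 0$ have compact resolvent then so does $T_1\otimes 1 + 1\otimes T_2$ (its spectrum is the set of sums of eigenvalues, each of finite multiplicity and tending to infinity); applied to $T_1 = D_A^2$, $T_2 = D_B^2$ this gives compactness of $(1+D^2)^{-1/2}$. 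For the bounded-commutator condition I would compute, for $a\in\mathcal{A}$ and $b\in\mathcal{B}$, $[D, \pi_A(a)\otimes\pi_B(b)] = [D_A,\pi_A(a)]\otimes\pi_B(b) + (\pi_A(a)\gamma)\otimes[D_B,\pi_B(b)]$, where the crucial cancellation uses $[\gamma,\pi_A(a)]=0$ (the triple on $A$ is even); both terms are bounded by hypothesis, and by linearity the commutator extends boundedly to all of $\mathcal{A}\odot\mathcal{B}$. Invariance of $\Dom D$ under the representation is then routine, and the triple is odd as no grading is compatible with $D$.

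It remains to show that $\mathfrak{b}(D)$ represents the external Kasparov product. Here I would appeal to the fact that $(H, D)$ is, up to unitary equivalence, exactly the graded external tensor product of the two cycles: writing the product on the graded Hilbert space $H_A\hatox H_B$ with operator $D_A\hatox 1 + 1\hatox D_B$, the Koszul sign rule forces the odd operator $1\hatox D_B$ to act as $\gamma\otimes D_B$ once the $\Z_2$-grading is resolved into the ungraded matrix form, reproducing precisely our $D$. Since the external product of an even $KK_0(A,\C)$-cycle with an odd $KK_1(B,\C)$-cycle lands in $KK_1(A\otimes_{min}B,\C)$ and is computed by this graded sum, the claim follows; alternatively one can verify Kucerovsky's connection and positivity conditions directly, which for an external product over $\C$ are immediate from the anticommutation established above.

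The main obstacle I anticipate is not the algebraic computation of $D^2$ and of the commutators, which is routine, but the analytic justification that $D$ is essentially self-adjoint and that the bounded transform genuinely represents the product class at the level of $KK$-theory; these points require the machinery of the unbounded Kasparov product (graded tensor products of unbounded operators, connections, Kucerovsky's criterion), and I would lean on the references cited, Higson--Roe Chapter 9, rather than reprove them from scratch.
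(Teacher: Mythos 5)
The paper itself offers no proof of this proposition --- it is quoted from Higson--Roe, Chapter 9 --- and your outline is precisely the standard argument behind that citation: the decomposition $D = D_A\otimes 1 + \gamma\otimes D_B$ into anticommuting self-adjoint pieces, the resulting identity $D^2 = D_A^2\otimes 1 + 1\otimes D_B^2$ driving the compact-resolvent estimate, the commutator computation hinging on $[\gamma,\pi_A(a)]=0$, and the identification of $(H,D)$ with the graded external tensor product of the two cycles. Your proposal is correct, and it appropriately defers exactly the two genuinely analytic points (essential self-adjointness of the sum of anticommuting operators, and the $KK$-theoretic identification of the bounded transform via connections or Kucerovsky's criterion) to the same reference on which the paper relies.
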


\subsection{Spectral Triples over Discrete Groups}\label{Appendix 2}\indent

As we will see in the following, a crucial ingredient in our construction of a triple on a crossed product $A\rtimes_{\alpha,r}G$ is a spectral triple on the reduced group algebra $C^{*}_{r}(G)$. It is known that unbounded Kasparov modules on groups and groupoids exist when they are endowed with a weight type function, see e.g. \cite{connes1989compact, bertozzini2006category,austad21modulation,Meslandgroupoid }. In this subsection we recall some basic facts and definitions specializing the discussion for our purposes. 

\begin{defn}
	A \emph{weight} on a group $G$ is a function $l\colon G\rightarrow \mathbb{R}$. A weight is \emph{proper} if the level sets $\Set{g\in G \mid  -n \leq l(g) \leq n}$ are finite for each $n\in \mathbb{N}$. We say that a weight is \emph{non-degenerate} when $l(g)=0$ if and only if $g=e$. 
\end{defn}

	Proper weights can exist only on countable groups for obvious reasons. Since we assume that all topological groups $G$ are  second countable, every discrete group is in particular countable and may admit proper weights.

\begin{exa}\label{exa}Any group homomorphism  $l\colon G\rightarrow \mathbb{R}$ is clearly an example of a weight on $G$. In this case $l$ must be cyclic in the sense that
	\begin{displaymath}
	l(gh) = l(hg) \qquad \forall g,h\in G
	\end{displaymath}
	 as $\mathbb{R}$ is abelian. Note that if the homomorphism $l$ is non-degenerate as a weight, then $G$ must be abelian. \demo
	\end{exa}

\begin{defn}
	A weight $l\colon G\rightarrow \mathbb{R}$ is said to be a \emph{Dirac weight} if for every $g\in G$ the (left) translation function $l_g\colon G\rightarrow \mathbb{R}$ given by 
	\begin{displaymath}
	l_g(x)=l(x)-l(g^{-1}x) \qquad x\in G
	\end{displaymath} is bounded. A Dirac weight $l$ is said to be of \emph{first-order type} if for every $g\in G$ the (left) translation function $l_g$ is constant.   
\end{defn}
	A special case of a weight on a  group $G$ is given by the notion of a \emph{length function}, namely a non-degenerate weight $l\colon G\rightarrow \mathbb{R}$ such that $l(xy)\leq l(x) + l(y)$ for every $x,y\in G$ and $l(x) = l(x^{-1})$ for every $x\in G$.
	Note that any length function is a positive Dirac weight as 
	\begin{displaymath}
		0 = l(e) = l(xx^{-1})\leq l(x) + l(x^{-1}) = 2l(x) \qquad \forall x\in G
	\end{displaymath}
and 
\begin{displaymath}
		l_{g}(x) = l(x) - l(g^{-1}x)\leq l(g) \qquad \forall x,g\in G.
\end{displaymath}
 The prototypical length function is the \emph{word metric} on a finitely generated group $G$ which associates  to any element $g\in G$ the minimum number of generators needed to write $g$ (for an \emph{a priori} fixed generating set).   

\begin{lemma}[cf.\cite{bertozzini2006category}, Lemma 3.13]\label{68}
	Let $l\colon G\rightarrow \mathbb{R}$ be a weight on a group $G$. The following facts are equivalent:
	\begin{enumerate}
		\item $l$ is a first-order type Dirac weight
		\item $l=\alpha + \varphi$ where $\alpha$ is constant and $\varphi\colon G\rightarrow \mathbb{R}$ is a homomorphism
		\item $l(xzy^{-1}) - l(zy^{-1}) = l(xz) - l(z)$ for every $x,y,z\in G$. 
	\end{enumerate}
\end{lemma}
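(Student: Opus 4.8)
The plan is to treat the homomorphism description $(2)$ as the pivot and to prove $(2)\Rightarrow(1)$, $(2)\Rightarrow(3)$, $(1)\Rightarrow(2)$ and $(3)\Rightarrow(2)$, since every implication \emph{out of} $(2)$ reduces to a one-line substitution. Assuming $l=\alpha+\varphi$ with $\alpha$ constant and $\varphi$ a homomorphism, I would compute $l_g(x)=l(x)-l(g^{-1}x)=\varphi(x)-\varphi(g^{-1}x)=-\varphi(g^{-1})=\varphi(g)$, which is independent of $x$; this gives $(1)$, and incidentally shows that constancy of $l_g$ already forces boundedness, so the Dirac-weight property needs no separate verification. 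Likewise $l(xzy^{-1})-l(zy^{-1})=\varphi(x)$ and $l(xz)-l(z)=\varphi(x)$, so both sides of $(3)$ equal $\varphi(x)$.

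For $(1)\Rightarrow(2)$ I would set $\alpha:=l(e)$ and $\varphi(g):=l(g)-l(e)$. Since $l_g$ is constant in $x$, evaluating it at $x=g$ gives $l_g\equiv l(g)-l(e)=\varphi(g)$, i.e. $\varphi(x)-\varphi(g^{-1}x)=\varphi(g)$ for all $g,x\in G$. Substituting $x=gy$ rewrites this as $\varphi(gy)=\varphi(g)+\varphi(y)$, which is exactly the homomorphism property, so $l=\alpha+\varphi$ has the desired form.

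The heart of the argument is $(3)\Rightarrow(2)$. Here I would first relabel by setting $a=x$, $b=z$, $c=y^{-1}$, so that $(3)$ becomes $l(abc)-l(bc)=l(ab)-l(b)$ for all $a,b,c\in G$. Introducing the left difference $\Delta_a(b):=l(ab)-l(b)$, this says $\Delta_a(bc)=\Delta_a(b)$ for all $b,c$; since $bc$ ranges over all of $G$ as $c$ does, $\Delta_a$ must be constant, equal to $\Delta_a(e)=l(a)-l(e)$. Thus $l(ab)=l(b)+\bigl(l(a)-l(e)\bigr)$, and setting $\varphi(a):=l(a)-l(e)$, $\alpha:=l(e)$ yields $\varphi(ab)=l(ab)-l(e)=\varphi(a)+\varphi(b)$, giving $(2)$.

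I expect the only real obstacle to be spotting the reformulation in $(3)\Rightarrow(2)$: once $(3)$ is rewritten as $\Delta_a(bc)=\Delta_a(b)$, the crucial point is that invariance of the left difference $\Delta_a$ under right translation, together with transitivity of right translation on $G$, forces $\Delta_a$ to be constant. Everything else is routine bookkeeping, and the four implications close the chain of equivalences.
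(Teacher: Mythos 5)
Your proof is correct in all four implications: the computations out of $(2)$ are right, the evaluation of the constant $l_g$ at $x=g$ followed by the substitution $x=gy$ correctly yields the homomorphism property in $(1)\Rightarrow(2)$, and the relabelling $c=y^{-1}$ together with the right-translation-invariance of $\Delta_a$ settles $(3)\Rightarrow(2)$. The paper itself gives no proof of this lemma --- it simply cites Lemma 3.13 of the reference --- so there is nothing to compare against; your argument fills that gap with the standard elementary verification.
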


Till the end of this subsection we shall consider a discrete group $G$ endowed with a proper Dirac weight  $l\colon G\rightarrow \mathbb{R}$. 
Let $M_{l}$ be the multiplication operator by $l$ on the domain of finitely supported elements of $\ell^2 (G)$ and let us denote  also by $M_{l}$ its self adjoint extension to $\ell^{2}(G)$. The group algebra $\mathbb{C}G$ acts on $\ell^{2}(G)$ via the left regular representation $\lambda_{g}\delta_{h}=\delta_{gh}$ for $g,h\in G$, and the data 
\begin{equation}\label{28}
(\mathbb{C}G, \ell^2 (G), M_l,\lambda)
\end{equation} form an odd spectral triple  on the reduced group $C^{*}$-algebra $C^*_r(G)$. Indeed, $M_{l}$ is self-adjoint  as $l$ takes values in $\mathbb{R}$, the properness of $l$ implies that the resolvent of $M_{l}$ is compact and the fact that $l$ is a Dirac weight guarantees that the commutators $[M_{l}, \lambda_{g}]= M_{l_{g}}\lambda_{g}$ are bounded for every $g\in G$. Note further that if the weight $l$ is non-degenerate, then the triple \eqref{28} is also non degenerate. 

\begin{exa}Consider the discrete group $G=\mathbb{Z}$ endowed with the non-degenerate proper Dirac weight $\imath\colon \mathbb{Z}\rightarrow \mathbb{R}$ given by the inclusion. It is well known (see e.g. \cite[pag. 240]{Meslandgroupoid}) that the spectral triple as defined in \eqref{28} agrees with the usual spectral triple on $C(S^{1})$ arising from the Dirac operator $-i\frac{d}{d\theta}$ under the Fourier transform $C^{*}_{r}(\mathbb{Z})\simeq C(S^{1})$. Note in particular that, at the level of K-homology, it is a generator of the cyclic group $K^{1}(C^{*}_{r}(\mathbb{Z}))\simeq \mathbb{Z}$. 
\end{exa}

 \begin{rmk}\label{72bis}We thank the referee for this remark. 	The properness condition for a Dirac weight on a group is \emph{extremely restrictive} as it forces the weight to grow (for example, there is not any constant proper weight  on a group unless the group is finite). Furthermore, the space of proper weights on a group may be a priori empty. To avoid this condition, one is somewhat forced to fall into Kasparov's KK-theory (e.g. \cite[Section 3.7]{Meslandgroupoid}). However, avoiding this condition is not always convenient: for example, any positive weight will yield a spectral triple whose	K-homology class is trivial. 
 \end{rmk}

 We now pass to the study of real structures on the spectral triple \eqref{28}. 
\begin{prop}[cf. \cite{rieffel2002group,bertozzini2006category}]\label{30}
	Let $G$ be a discrete group and $l\colon G\rightarrow \mathbb{R}$ a proper Dirac weight. The anti-unitary involutive map $J_{G}\colon \ell^{2}(G)\rightarrow \ell^{2}(G)$
	given by the anti-linear extension of
	\begin{equation}\label{29}
	J_{G}\delta_{g} = \delta_{g^{-1}} \qquad \forall g\in G
	\end{equation} 
	is a real structure on the odd spectral triple \eqref{28} if and only if for every $g\in G$, $l(g^{-1}) = \varepsilon^{\prime}l(g)$, where $\varepsilon^{\prime}=\pm 1$. 	In this case the KO-dimension of the real structure is given by the pair $(+1,\varepsilon^{\prime})$ and can be either $1$ or $7$. 
 
\end{prop}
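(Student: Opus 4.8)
The plan is to verify directly the defining conditions of a real structure from Definition \ref{67}, specialised to the odd triple \eqref{28}. Since the triple is odd there is no grading and hence no sign $\varepsilon''$, so only the zeroth order condition together with the relations $J_G^2 = \varepsilon$ and $M_l J_G = \varepsilon' J_G M_l$ need be examined. Applying the definition of $J_G$ twice gives $J_G^2\delta_g = \delta_{(g^{-1})^{-1}} = \delta_g$, so $J_G^2 = \id$ and thus $\varepsilon = +1$; anti-unitarity follows from $\langle J_G\delta_g, J_G\delta_h\rangle = \langle\delta_{g^{-1}},\delta_{h^{-1}}\rangle = \delta_{g,h} = \overline{\langle\delta_g,\delta_h\rangle}$ extended anti-linearly.

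I would next observe that the zeroth order condition holds for \emph{any} proper Dirac weight, independently of the symmetry hypothesis on $l$. Computing on the basis, $J_G\lambda_h J_G^{-1}\delta_k = J_G\lambda_h\delta_{k^{-1}} = \delta_{kh^{-1}}$, which identifies $J_G\lambda_h J_G^{-1}$ with a right translation; the standard commutation of the left and right regular representations then gives $\lambda_g(J_G\lambda_h J_G^{-1})\delta_k = \delta_{gkh^{-1}} = (J_G\lambda_h J_G^{-1})\lambda_g\delta_k$, so that $[\lambda_g, J_G\lambda_h J_G^{-1}] = 0$ on the basis and hence on all of $\ell^2(G)$.

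The only place where the hypothesis enters is the relation $M_lJ_G = \varepsilon' J_G M_l$, which is the heart of the argument. Keeping track of the anti-linearity and using $l(g)\in\R$, on $\delta_g$ one finds $M_lJ_G\delta_g = l(g^{-1})\delta_{g^{-1}}$ and $J_GM_l\delta_g = \overline{l(g)}\,\delta_{g^{-1}} = l(g)\,\delta_{g^{-1}}$. Therefore $M_lJ_G = \varepsilon' J_GM_l$ holds for a fixed sign $\varepsilon' = \pm1$ precisely when $l(g^{-1}) = \varepsilon' l(g)$ for every $g\in G$, which is the stated condition; the same identity, giving $|l(g^{-1})| = |l(g)|$, ensures that $J_G$ preserves $\Dom(M_l)$, so the operator relation is genuine and not merely formal. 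This establishes the equivalence.

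Finally, to determine the KO-dimension I would read off the table following Definition \ref{67}. For an odd triple only odd values of $n$ occur, and the constraint $\varepsilon = +1$ leaves exactly $n = 7$ (with $\varepsilon' = +1$) and $n = 1$ (with $\varepsilon' = -1$); hence the dimension is fixed by the pair $(+1,\varepsilon')$ and equals $7$ or $1$ accordingly. No serious obstacle is expected: the two points demanding attention are the anti-linearity of $J_G$ when commuting it past $M_l$, and the observation that a single sign $\varepsilon'$ must work uniformly in $g$, which forces $l$ to be either symmetric or anti-symmetric under inversion.
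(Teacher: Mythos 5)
Your proof is correct and follows essentially the same direct-verification route as the paper, which simply notes that $J_G^2=1$ and that $M_lJ_G=\varepsilon'J_GM_l$ holds iff $l(g^{-1})=\varepsilon'l(g)$; your version merely fills in the details the paper leaves implicit (the zeroth order condition via the right regular representation, the anti-linearity bookkeeping, and reading the KO-dimension off the sign table).
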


\begin{proof}
Clearly $J_{G}^{2}=1$. Then just note that the equation $M_{l}J_{G} = \varepsilon^{\prime}J_{G}M_{l}$ is fulfilled if and only if $l(g^{-1}) = \varepsilon^{\prime}l(g)$ for every $g\in G$. 
\end{proof}

\begin{prop}[cf.\cite{rieffel2002group}]\label{72}
	Let $G$ be a discrete group and $l\colon G\rightarrow \mathbb{R}$ a proper Dirac weight. Suppose that the map $J_{G}$ given in \eqref{29} is a real structure for the spectral triple \eqref{28}. Then $(\mathbb{C}G,\ell^{2}(G),M_{l},J_{G})$ satisfies the first order condition if and only if $l$ is either a constant or a homomorphism. 
\end{prop}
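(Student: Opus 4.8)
The plan is to reduce the first order condition to condition (3) of Lemma \ref{68} by a direct computation on the orthonormal basis $(\delta_h)_{h\in G}$ of $\ell^2(G)$, and then to intersect the resulting characterization of $l$ with the constraint imposed by the requirement that $J_G$ be a real structure, recorded in Proposition \ref{30}.

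First I would compute the conjugated right action. Since $J_G$ is involutive we have $J_G^{-1}=J_G$, and a short computation on basis vectors gives $J_G\lambda_k J_G^{-1}\delta_h = \delta_{hk^{-1}}$ for all $h,k\in G$; that is, $J_G\lambda_k J_G^{-1}$ is the right regular representation. Combining this with the formula $[M_l,\lambda_g]=M_{l_g}\lambda_g$ recalled above, whose action reads $[M_l,\lambda_g]\delta_h = (l(gh)-l(h))\delta_{gh}$, I would evaluate the double commutator $[[M_l,\lambda_g],J_G\lambda_k J_G^{-1}]$ on each $\delta_h$. Both terms land on $\delta_{ghk^{-1}}$, so the first order condition reduces to the scalar identity
\begin{displaymath}
l(ghk^{-1}) - l(hk^{-1}) = l(gh) - l(h) \qquad \forall\, g,h,k\in G.
\end{displaymath}
Because $\mathbb{C}G$ is spanned by the $\lambda_g$ and both entries of the bracket depend (anti-)linearly on $a$ and $b$, it suffices to test the condition on group elements. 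Renaming $g,h,k$ as $x,z,y$, this identity is exactly condition (3) of Lemma \ref{68}; hence by that lemma the first order condition is equivalent to $l$ being a first-order type Dirac weight, i.e. $l=\alpha+\varphi$ with $\alpha$ constant and $\varphi\colon G\to\mathbb{R}$ a homomorphism.

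The remaining and conceptually most interesting step is to combine this with the hypothesis that $J_G$ is a real structure, which by Proposition \ref{30} forces $l(g^{-1})=\varepsilon^{\prime}l(g)$ for some sign $\varepsilon^{\prime}=\pm1$. Writing $l=\alpha+\varphi$ and using $\varphi(g^{-1})=-\varphi(g)$, I would compare $l(g^{-1})=\alpha-\varphi(g)$ with $\varepsilon^{\prime}l(g)=\varepsilon^{\prime}(\alpha+\varphi(g))$. If $\varepsilon^{\prime}=+1$ this forces $\varphi\equiv 0$, so $l=\alpha$ is constant; if $\varepsilon^{\prime}=-1$ then evaluating at $g=e$ gives $\alpha=0$, so $l=\varphi$ is a homomorphism. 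Conversely, both a constant weight and a homomorphism are plainly first-order type Dirac weights, hence satisfy condition (3) and therefore the first order condition, which yields the stated equivalence. I do not anticipate a genuine obstacle here: the only care needed is in the sign bookkeeping and in observing that it is precisely the real-structure hypothesis that collapses the general affine form $\alpha+\varphi$ to the dichotomy ``constant or homomorphism''.
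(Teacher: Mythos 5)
Your proposal is correct and follows essentially the same route as the paper: reduce the first order condition to the identity $l(xzy^{-1})-l(zy^{-1})=l(xz)-l(z)$, invoke Lemma \ref{68} to get $l=\alpha+\varphi$, and then use the sign constraint $l(g^{-1})=\varepsilon'l(g)$ from Proposition \ref{30} to force $l$ to be constant (for $\varepsilon'=+1$) or a homomorphism (for $\varepsilon'=-1$). Your write-up simply makes explicit the computations the paper labels as ``easy,'' and the sign bookkeeping is right.
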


\begin{proof}An easy computation shows that the first order condition holds true if and only if we have that
	\begin{displaymath}
	l(xzy^{-1}) - l(zy^{-1}) = l(xz) - l(z)
	\end{displaymath}
for every $x,y,z\in G$. By Lemma \ref{68}, this means that $l$ must be the sum of a constant and a homomorphism. But from Proposition \ref{30} we know that $l(g^{-1}) = \varepsilon^{\prime}l(g)$ for every $g\in G$ and it is easy to see that if $\varepsilon^{\prime}=1$ then $l$ must be constant and that if $\varepsilon^{\prime}=-1$ then $l$ must be a homomorphism. 
\end{proof}

\color{black}
\begin{prop}
	Let $G$ be a discrete group and $l\colon G\rightarrow \mathbb{R}$ a proper Dirac weight. Suppose that the map $J_{G}$ given in \eqref{29} is a real structure for the spectral triple \eqref{28}. If $(\mathbb{C}G,\ell^{2}(G),M_{l},J_{G})$ satisfies the first order condition, then it satisfies the second order condition.  
 
\end{prop}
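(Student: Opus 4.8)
The plan is to reduce everything to the special form that the commutators $[M_l,\lambda_g]$ acquire once the first order condition is assumed. The proof of Proposition \ref{72} shows that the first order condition is equivalent to condition (3) of Lemma \ref{68}, i.e.\ to $l$ being a \emph{first-order type Dirac weight}. By definition this means precisely that for every $g\in G$ the translation function $l_g$ is constant; write $l_g\in\mathbb{R}$ for that constant value. Hence the multiplication operator $M_{l_g}=l_g\cdot\mathrm{Id}$ is scalar, and the commutator collapses to
\begin{displaymath}
[M_l,\lambda_g]=M_{l_g}\lambda_g = l_g\,\lambda_g .
\end{displaymath}
(Concretely, by Propositions \ref{30} and \ref{72} either $l$ is constant, in which case $l_g=0$, or $l=\varphi$ is a homomorphism, in which case $l_g=\varphi(g)$; in both cases $l_g$ is a real scalar.)

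Next I would compute the conjugated operator $J_G[M_l,\lambda_y]J_G^{-1}$. Since $l_y$ is real and $J_G$ is anti-linear, the scalar passes through unchanged, $J_G(l_y\lambda_y)J_G^{-1}=l_y\,J_G\lambda_y J_G^{-1}$. Recalling $J_G^{-1}=J_G$, a direct computation on the basis gives
\begin{displaymath}
J_G\lambda_y J_G^{-1}\delta_h = J_G\lambda_y\delta_{h^{-1}} = J_G\delta_{yh^{-1}} = \delta_{hy^{-1}}=:\rho_y\delta_h,
\end{displaymath}
so that $J_G\lambda_y J_G^{-1}=\rho_y$ is exactly the right regular representation. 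Therefore $J_G[M_l,\lambda_y]J_G^{-1}=l_y\,\rho_y$.

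Finally, by linearity in $a,b\in\mathbb{C}G$ it suffices to verify the second order condition on group elements, where it reduces to
\begin{displaymath}
\big[[M_l,\lambda_x],\,J_G[M_l,\lambda_y]J_G^{-1}\big]=\big[\,l_x\lambda_x,\,l_y\rho_y\,\big]=l_x l_y\,[\lambda_x,\rho_y].
\end{displaymath}
This vanishes because the left and right regular representations commute: $\lambda_x\rho_y\delta_h=\delta_{xhy^{-1}}=\rho_y\lambda_x\delta_h$. The only real content of the argument is the first step, namely recognising that the first order condition forces every $l_g$ to be constant and hence every $[M_l,\lambda_g]$ to be a scalar multiple of $\lambda_g$; once this reduction is secured, the conclusion is immediate from the commutativity of $\lambda$ and $\rho$, so I do not anticipate a genuine obstacle beyond correctly identifying $J_G\lambda_y J_G^{-1}$ with the right regular representation.
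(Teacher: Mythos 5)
Your proof is correct, and it is organized differently from the paper's. The paper verifies the identity by brute force: it expands the double commutator on a basis vector $\delta_x$, obtaining a coefficient of the form $\bigl(l(gx^{-1})-l(x^{-1})\bigr)\bigl[\bigl(l(hxg^{-1})-l(xg^{-1})\bigr)-\bigl(l(hx)-l(x)\bigr)\bigr]$, and then kills the bracket using the identity $l(xzy^{-1})-l(zy^{-1})=l(xz)-l(z)$ that characterises the first order condition via Lemma \ref{68}. You instead feed the first order condition in at the start, noting (correctly, via Proposition \ref{72} and Lemma \ref{68}) that it forces each translation function $l_g$ to be constant, so that $[M_l,\lambda_g]=l_g\lambda_g$ is a scalar multiple of a left translation; you then identify $J_G\lambda_y J_G^{-1}$ with the right regular representation $\rho_y$ (the scalar $l_y$ passing through the anti-linear conjugation because it is real) and conclude from $[\lambda_x,\rho_y]=0$. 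Both routes are elementary computations on the basis $\{\delta_h\}$, but yours isolates the two structural facts doing the work -- the zeroth-order mechanism $J_G\lambda J_G^{-1}=\rho$ and the commutation of the left and right regular representations -- which makes the vanishing conceptually transparent, whereas the paper's version keeps the hypothesis in reserve and applies it only to the final scalar coefficient. One cosmetic remark: the reduction to generators is by linearity in $a$ and \emph{conjugate}-linearity in $b$ (since $T\mapsto J_GTJ_G^{-1}$ is conjugate-linear on operators), but this does not affect the conclusion that vanishing on generators implies vanishing for all $a,b\in\mathbb{C}G$.
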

	\begin{proof}
		An easy computation shows that for any $g,h,x\in G$ we have
		\begin{displaymath}
		\begin{split}
		\left[[M_{l}, \lambda_{h}], J_{G}[M_{l}, \lambda_{g}]J_{G}^{-1}\right] \delta_{x}&= \left(l(gx^{-1}) - l(x^{-1})\right)\left(l(hxg^{-1}) - l(xg^{-1})\right)\delta_{hxg^{-1}} \\
		& \qquad \qquad  - \left(l(hx) - l(x)\right)\left(l(gx^{-1}) - l(x^{-1})\right)\delta_{hxg^{-1}}.
		\end{split}
		\end{displaymath}
		The thesis comes from Proposition \ref{72}. 
	\end{proof}

%%%%%%%%%%%%%%%%%%%%%%%%%%%%%%%%%%%%%%%%%%%%%%%%%%%%%%%%%%%%%%%%%%%%%%%%%%%%%%%%%
\section{Spectral Triples on Crossed Products}\label{48}
 
In this section we  recall (with some additions) the relevant results from \cite{hawkins2013spectral}.

\subsection{Spectral Triples Arising From Equicontinuous Actions}\label{47}	\indent

\begin{ass}\label{assumption1}
	Let $(\mathcal{A}, H, D,\pi)$ be an odd spectral triple  on a unital $C^*$-algebra $A$ with $\pi$ faithful and $(A,G,\alpha)$ a $C^{*}$-dynamical system, namely a discrete group $G$ acting on $A$ by automorphisms such that the map $x\mapsto \alpha_{x}(a)$ from $G$ to $A$ is continuous  for any $a\in A$. Let us suppose further that $G$ is equipped with a proper Dirac weight $l\colon G\rightarrow \mathbb{R}$ so that this determines a spectral triple $(\mathbb{C}G, \ell^{2}(G), M_{l}, \lambda)$ over $C^{*}_{r}(G)$ (see Subsection \ref{Appendix 2}).
\end{ass}

 We regard an element $f\in C_{c}(G,\mathcal{A})$ as a sum $f=\sum_{g\in G} a_{g}\delta_{g}$ where only finitely many elements $a_{g}$ are non-zero and $\delta_{g}$ is the function which is $1$ on $g\in G$ and zero otherwise. We define a product and  $*$-operation on $C_{c}(G,\mathcal{A})$ by
	\begin{displaymath}
		\delta_{h}\delta_{g} = \delta_{hg} \quad \delta_{g}^{*} = \delta_{g^{-1}} \quad \delta_{g}a\delta_{g}^{*} = \alpha_{g}(a),
	\end{displaymath}
	such that $C_{c}(G,\mathcal{A})$ becomes a $*$-algebra. The pair of maps 
	\begin{equation}\label{73}
		\begin{dcases}
			\hat{\pi}_{1}(a)(\xi\otimes \delta_{g}) = \pi\left(\alpha_{g^{-1}}(a)\right)\xi\otimes \delta_{g} \\
			\hat{\lambda}_{h}(\xi\otimes \delta_{g}) = \xi\otimes \delta_{hg},
		\end{dcases}
	\end{equation}
	where $a\in A$, $\xi\in H$ and $g,h\in G$, provide representations of $A$ and $G$ on $H\otimes \ell^{2}(G)$ which are \emph{covariant}, namely such that
	\begin{equation}\label{60}
		\hat{\lambda}_{h}\hat{\pi}_{1}(a)\hat{\lambda}_{h}^{*} = 	\hat{\pi}_{1}(\alpha_{g}(a)).
	\end{equation}
	Thanks to \eqref{60}, the \emph{integrated form} $\hat{\pi}_{1}\rtimes \hat{\lambda}$ of the pair $(\hat{\pi}_{1},\hat{\lambda})$ given by
	\begin{equation}\label{61}
		\hat{\pi}_{1}\rtimes \hat{\lambda}(a\delta_{g}) = \hat{\pi}_{1}(a) \hat{\lambda}_{g}
	\end{equation}
	is a $*$-representation of $C_{c}(G,\mathcal{A})$ on $H\otimes \ell^{2}(G)$. We define the reduced crossed product $A\rtimes_{\alpha,r}G$ as the completion of $C_{c}(G,\mathcal{A})$ under the norm given by regarding its elements as bounded operators on that Hilbert space. \\

We now return to the discussion of the spectral triple on $A\rtimes_{r,\alpha}G$. 
Following Proposition \ref{prop1}, we define a Dirac operator $\widehat{D}$ on $\widehat{H} = H\otimes \ell^{2}(G)\otimes \mathbb{C}^{2}$ by 
\begin{equation}\label{8}
\begin{split}
\widehat{D} &= D\otimes 1\otimes \sigma_1 + 1\otimes M_{l}\otimes \sigma_{2}\\
&= \left( \begin{matrix}
0 & D\otimes 1 - i \otimes M_{l} \\
D\otimes 1 + i \otimes M_{l} & 0 \\
\end{matrix}\right),
\end{split}
\end{equation}
where $\sigma_{1}$ and $\sigma_{2}$ are Pauli matrices and we consider $A\rtimes_{r,\alpha}G$ acting diagonally on $\widehat{H}$ by two copies of the integrated form \eqref{61}. The operator \eqref{8} is clearly densely defined and self-adjoint with compact resolvent. The only non-trivial fact is to check that the commutator of $\widehat{D}$ with the representation of $A\rtimes_{\alpha, r} G$ is bounded: on the one hand, we have that
\begin{equation}\label{41}
	\left[1\otimes M_{l}, \hat{\pi}_{1}(a)\hat{\lambda}_{h}\right] = (1\otimes M_{l_{h}})\hat{\pi}_{1}(a)\hat{\lambda}_{h}
\end{equation}
for any $h\in G$. As $l$ is a Dirac weight, the operator  $M_{l_{h}}$ of multiplication by the function $l_{h}(x) = l(x)  - l(h^{-1}x)$ on $\ell^{2}(G)$ is bounded. On the other hand, for $a\in \mathcal{A}$ and $g,h\in G$, we have
\begin{equation}\label{40}
	\left[D\otimes 1, \hat{\pi}_{1}(a)\hat{\lambda}_{h}\right](\xi\otimes \delta_{g}) = \left[D, \pi(\alpha_{g^{-1}h^{-1}}(a))\right]\xi\otimes \delta_{hg}.
\end{equation}
A priori this quantity is not well defined as $\pi(\alpha_{g}(a))$ might not belong to the domain of $D$; and even in that case the right hand side is not necessarily bounded. We overcome these difficulties in the following way.

\begin{defn}[cf.\cite{hawkins2013spectral}]
	We say that the (continuous) action $\alpha\colon G\rightarrow \textup{Aut}(A)$ is:
	\begin{itemize}
		\item \emph{smooth} if $\alpha_g(\mathcal{A})\subseteq \mathcal{A}$ for all $g\in G$
		\item \emph{equicontinuous} if for all $a\in \mathcal{A}$
		\begin{equation}\label{81}
			\sup_{g\in G}\norm{[D,\pi(\alpha_g(a))]}< \infty.	
		\end{equation}  
	\end{itemize}
\end{defn}

\begin{thm}[cf.\cite{hawkins2013spectral}]\label{thm}
		 Let $(\mathcal{A}, H, D)$ and $G$ as in Assumptions \ref{assumption1}. Suppose further that $G$ acts smoothly and equicontinuously on $A$. The triple $(C_{c}(G,\mathcal{A}), \widehat{H}, \widehat{D}, \hat{\pi}_{1}\rtimes \hat{\lambda})$ is an even spectral triple on $A\rtimes_{\alpha,r} G$. Furthermore, if the weight $l$ is non-degenerate and the triple  $(\mathcal{A}, H, D)$ is non-degenerate, then the triple 
$(C_{c}(G,\mathcal{A}), \widehat{H},\widehat{D},\hat{\pi}_{1}\rtimes \hat{\lambda})$ is also non-degenerate.
\end{thm}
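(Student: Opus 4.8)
The plan is to verify, one axiom at a time, that $(C_{c}(G,\mathcal{A}), \widehat{H}, \widehat{D})$ is an even spectral triple, and then to deduce non-degeneracy from the two non-degeneracy hypotheses. First I would fix the grading $\chi = 1\otimes 1\otimes\sigma_{3}$ on $\widehat{H}=H\otimes\ell^{2}(G)\otimes\mathbb{C}^{2}$, where $\sigma_{3}=\diag(1,-1)$. Since the algebra acts diagonally, i.e.\ as $\rho\otimes 1_{\mathbb{C}^{2}}$ with $\rho = \hat{\pi}_{1}\rtimes\hat{\lambda}(\cdot)$, it commutes with $\chi$, while $\sigma_{1}\sigma_{3}=-\sigma_{3}\sigma_{1}$ and $\sigma_{2}\sigma_{3}=-\sigma_{3}\sigma_{2}$ give $\widehat{D}\chi=-\chi\widehat{D}$ from \eqref{8}; this is what makes the triple even. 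For the compact resolvent I would square \eqref{8}: because $\sigma_{1},\sigma_{2}$ anticommute and square to $1$, the cross terms cancel and $\widehat{D}^{2}=(D^{2}\otimes 1 + 1\otimes M_{l}^{2})\otimes 1$. The two commuting summands $D^{2}\otimes 1$ and $1\otimes M_{l}^{2}$ are simultaneously diagonalised by the products of an eigenbasis of $D$ with the basis $\{\delta_{g}\}$ (on which $M_{l}$ acts by $l(g)$), and the eigenvalues $\lambda_{i}^{2}+l(g)^{2}$ of their sum accumulate only at $+\infty$ with finite multiplicities, since only finitely many $\lambda_{i}$ lie below any bound (compact resolvent of $D$) and only finitely many $g$ (properness of $l$). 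Hence $(1+\widehat{D}^{2})^{-1/2}$ is compact, and self-adjointness of $\widehat{D}$ is inherited from that of $D$ and $M_{l}$.

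The heart of the argument is the boundedness of the commutators, and this is precisely where smoothness and equicontinuity enter. It suffices to treat a generator $a\delta_{h}$, for which the diagonal action gives
\[
\bigl[\widehat{D},\,\hat{\pi}_{1}(a)\hat{\lambda}_{h}\bigr]
= \bigl[D\otimes 1,\,\hat{\pi}_{1}(a)\hat{\lambda}_{h}\bigr]\otimes\sigma_{1}
+ \bigl[1\otimes M_{l},\,\hat{\pi}_{1}(a)\hat{\lambda}_{h}\bigr]\otimes\sigma_{2}.
\]
Smoothness guarantees $\alpha_{g^{-1}h^{-1}}(a)\in\mathcal{A}$, so the right-hand side of \eqref{40} is genuinely a bounded operator and $\hat{\pi}_{1}(a)\hat{\lambda}_{h}$ preserves $\Dom{\widehat{D}}$. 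The operator \eqref{40} is a weighted shift in the $G$-index, sending the $\delta_{g}$-summand to the $\delta_{hg}$-summand by $[D,\pi(\alpha_{g^{-1}h^{-1}}(a))]$; since $g\mapsto hg$ is a bijection, its norm equals $\sup_{g}\norm{[D,\pi(\alpha_{g^{-1}h^{-1}}(a))]}$, which is finite by the equicontinuity bound \eqref{81}. The second term is bounded directly from \eqref{41}, because $M_{l_{h}}$ is bounded ($l$ being a Dirac weight) and $\hat{\pi}_{1}(a)\hat{\lambda}_{h}$ is bounded. Summing over the finite support of $f\in C_{c}(G,\mathcal{A})$ gives boundedness in general, and domain-preservation propagates to all of $C_{c}(G,\mathcal{A})$ by the standard bounded-commutator argument (using self-adjointness of $\widehat{D}$ and that the commutator extends boundedly on a core).

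For non-degeneracy I would assume $\pi$ and $l$ both non-degenerate. Faithfulness of $\hat{\pi}_{1}\rtimes\hat{\lambda}$ is built into the definition of the reduced crossed product once $\pi$ is faithful, so it remains to characterise the kernel of $[\widehat{D},\cdot]$ as in Definition \ref{90}. Writing $f=\sum_{h}a_{h}\delta_{h}$ and using that $\sigma_{1},\sigma_{2}$ are linearly independent, the off-diagonal block structure shows that $[\widehat{D},\hat{\pi}_{1}\rtimes\hat{\lambda}(f)]=0$ is equivalent to the simultaneous vanishing of the two commutators \eqref{40} and \eqref{41}. Applying the $M_{l}$-commutator to $\xi\otimes\delta_{g}$ via \eqref{41} produces in the $\delta_{hg}$-summand the factor $l(hg)-l(g)$ times $\pi(\alpha_{(hg)^{-1}}(a_{h}))$; as the summands for distinct $h$ are orthogonal, faithfulness of $\pi$ forces $l(hg)=l(g)$ for every $g$ whenever $a_{h}\neq 0$, and evaluating at $g=e$ together with non-degeneracy of $l$ forces $h=e$. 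Thus $f=a_{e}\delta_{e}$, and the vanishing of \eqref{40} at $g=e$ then reads $[D,\pi(a_{e})]=0$, so $a_{e}\in\mathbb{C}1$ by non-degeneracy of $(\mathcal{A},H,D)$; the converse is immediate. The main obstacle throughout is the well-definedness and uniform boundedness of the $D\otimes 1$ commutator, which the authors isolate as the only nontrivial point and which is resolved exactly by combining smoothness (to make sense of the expression and preserve the domain) with equicontinuity (to bound it uniformly over $G$).
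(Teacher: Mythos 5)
Your proposal is correct and follows essentially the same route as the paper, which only sketches this result (deferring to the cited reference): the decomposition of $[\widehat{D},\hat{\pi}_{1}(a)\hat{\lambda}_{h}]$ into the $\sigma_{1}$- and $\sigma_{2}$-components \eqref{40} and \eqref{41}, with smoothness making the $D\otimes 1$ commutator well defined and equicontinuity bounding it uniformly over the shift index, is exactly the argument the authors indicate. Your non-degeneracy argument (orthogonality of the $\delta_{hg}$-summands forcing $l(hg)=l(g)$, hence $h=e$, then $[D,\pi(a_{e})]=0$) is a correct filling-in of a step the paper merely asserts.
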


\begin{rmk}
	Even though the operator $\widehat{D}$ resembles (a representative for) the external Kasparov product of $D$ and $M_{l}$, the triple $(C_{c}(G,\mathcal{A}), \widehat{H}, \widehat{D})$ is \emph{not} the external Kasparov product of the triple $(A, H, D)$ with the triple $(C^{*}_{r}(G), \ell^{2}(G), M_{l})$ as the isomorphic vector spaces $A\rtimes_{\alpha,r}G$ and $A\otimes C^{*}_{r}(G)$ are in general not isomorphic as algebras.
\end{rmk}

It is well known that the construction in Theorem \ref{thm} can be thought as a generalization of the boundary map in the K-homology Pimsner-Voiculescu sequence (cf. \cite{pimsner1980exact}) in the following sense. Applying the six-term exact sequence in K-homology to the generalised Toeplitz extension 
\begin{equation}\label{23}
0\longrightarrow A\otimes \mathcal{K}(\ell^{2}(\mathbb{Z})) \longrightarrow T_{\alpha}\longrightarrow A\rtimes_{\alpha}\mathbb{Z}\longrightarrow 0
\end{equation}
and using $K^{i}(T_{\alpha})\simeq K^{i}(A)$, one obtains the Pimsner-Voiculescu exact sequence 
\begin{center}
	\begin{tikzpicture}[scale=2.5]
	\node (C) at (0,0) {$K^{0}(A)$};
	\node (B) at (1.5,0) {$K^{0}(A)$};
	\node (A) at (3,0) {$K^{0}(A\rtimes_{\alpha}\mathbb{Z})$};
	\node (F) at (3,-1) {$K^{1}(A)$.};
	\node (E) at (1.5,-1) {$K^{1}(A)$};
	\node (D) at (0,-1) {$K^{1}(A\rtimes_{\alpha}\mathbb{Z})$};
	\path[-latex,font=\scriptsize]
	(A) edge node[above]{$\varepsilon$} (B)
	(B) edge node[above]{$1-\alpha^{*}$} (C)
	(C) edge node[left]{$\partial^{0}$} (D)
	(D) edge node[above]{$\varepsilon $} (E)
	(E) edge node[above]{$1-\alpha^{*}$} (F)
	(F) edge node[right]{$\partial^{1}$} (A);
	\end{tikzpicture}
\end{center}
Here $\varepsilon$ is the pull-back map. 
Note that the boundary maps $\partial^{0}$ and $\partial^{1}$ are just the left Kasparov multiplication by the class $[\tau]\in KK_{1}(A\rtimes_{\alpha}\mathbb{Z}, A\otimes \mathcal{K})\simeq KK_{1}(A\rtimes_{\alpha}\mathbb{Z}, A)$ defined by the extension \eqref{23} (see \cite[$\S 19.5-6$]{blackadar1998k}). 

Consider now the discrete group $G=\mathbb{Z}$ and the Dirac weight $\imath\colon \mathbb{Z}\rightarrow \mathbb{R}$ given by the inclusion. It is clear that the triple $(A\rtimes_{\alpha}\mathbb{Z}, \widehat{H}, \widehat{D})$  is the Kasparov product of $(A, H,D)$ with the class of the generalised Toeplitz extension \eqref{23}. This proves that:
	\begin{equation}
	\partial^{1} \left[(A, H,D)\right] = \left[(A\rtimes_{\alpha}\mathbb{Z}, \widehat{H}, \widehat{D})\right].
\end{equation}
More refined methods to see this fact can be found in many places, for instance \cite{rennie17KMSstates}.

\begin{rmk}We thank the	 referee for this remark. By passing to higher order spectral triples and applying a logarithmic dampening of the Dirac operator, it is possible to handle also non-equicontinuous actions and get similar results. As an example of this strategy, consider a non-isometric diffeomorphism on the circle which give rise to a twisted spectral triple on $S^{1}$ \cite[Example 1.6]{mesland19untwisting} of which the logarithmic dampening is an ordinary spectral triple \cite[Example 1.9]{mesland19untwisting}. The pullback action of the diffeomorphism generates an action of $\mathbb{Z}$ on the circle which is not equicontinuous. Deforming the standard Dirac operator on the circle and following a prescription similar to that of Theorem \eqref{thm}, one can define a higher order spectral triple on $C(S^{1})\rtimes \mathbb{Z}$ whose K-homology class coincides with the class of its dampening \cite[Corollary 1.41]{mesland19untwisting} and represents the Pimsner-Voiculescu boundary map image of the aformentioned dampened class on $C(S^{1})$ \cite[Prop. 1.30 and the preceeding discussion]{mesland19untwisting}.

\end{rmk}

Let us now discuss the properties of the triple of Theorem \ref{thm}. 
Concerning the dimension axiom \cite{connes1995noncommutative}, recall that a spectral triple $(\mathcal{A}, H,D)$ over the unital $C^{*}$-algebra $A$ is \emph{$p$-summable} for $p>0$ if the operator $(1+D^{2})^{-\frac{p}{2}}$ is trace-class. 
	In \cite{hawkins2013spectral} the additivity of dimension with respect to the crossed product was demonstrated: 

\begin{prop} Let $(\mathcal{A}, H, D)$ and $G$ as in Assumptions \ref{assumption1}. Suppose further that $G$ acts smoothly and equicontinuously on $A$. If the triple $(\mathcal{A}, H, D)$ is $p$-summable and the triple $(\mathbb{C}G, \ell^{2}(G), M_{l})$ is $q$-summable, then
	the triple $(A\rtimes_{\alpha, r}G, \widehat{H}, \widehat{D}, \hat{\pi}_{1}\rtimes \hat{\lambda})$ is $(p+q)$-summable.  
\end{prop}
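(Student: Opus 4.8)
The plan is to verify the defining condition directly, namely that $(1+\widehat{D}^2)^{-(p+q)/2}$ is trace-class; this is morally the summability statement for the external Kasparov product, so the expected answer is additivity of the summability exponent. First I would square the Dirac operator \eqref{8}. Since $\sigma_1,\sigma_2$ are Pauli matrices we have $\sigma_1^2=\sigma_2^2=1$ and $\sigma_1\sigma_2+\sigma_2\sigma_1=0$, and since $D\otimes 1$ and $1\otimes M_l$ act on different tensor factors the two cross terms cancel, leaving
\[
\widehat{D}^2=\bigl(D^2\otimes 1+1\otimes M_l^2\bigr)\otimes 1_{\mathbb{C}^2},
\qquad\text{hence}\qquad
1+\widehat{D}^2=\bigl((1+D^2)\otimes 1+1\otimes M_l^2\bigr)\otimes 1_{\mathbb{C}^2}.
\]

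Next I would pass to eigenvalues. Both $1+D^2$ and $M_l^2$ have purely discrete spectrum: the former because $(1+D^2)^{-1/2}$ is compact, the latter because $M_l$ is the diagonal operator with eigenvalues $l(g)$, $g\in G$, and $l$ is proper. Writing $\{\mu_i\}$ for the eigenvalues of $1+D^2$ (so $\mu_i\ge 1$) and $\{\nu_j\}$ for those of $M_l^2$ (so $\nu_j=l(g)^2\ge 0$), the operator $1+\widehat{D}^2$ is diagonal in the tensor-product eigenbasis with eigenvalues $\mu_i+\nu_j$, each occurring with the harmless extra multiplicity $2$ from the $\mathbb{C}^2$ factor. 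Therefore
\[
\Tr\bigl((1+\widehat{D}^2)^{-(p+q)/2}\bigr)=2\sum_{i,j}(\mu_i+\nu_j)^{-(p+q)/2},
\]
while the two hypotheses read $\sum_i\mu_i^{-p/2}<\infty$ (p-summability of $D$) and $\sum_j(1+\nu_j)^{-q/2}<\infty$ (q-summability of $M_l$).

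The crux, and the only point that is not pure bookkeeping, is a clean pointwise bound separating the two exponents: for $s,t>0$, $a\ge 1$ and $b\ge 0$,
\[
(a+b)^{-(s+t)}\le 2^{t}\,a^{-s}(1+b)^{-t}.
\]
I would prove this by splitting into the cases $b\le a$ and $b>a$: when $b\le a$ one has $1+b\le 2a$, so $a^s(1+b)^t\le 2^t a^{s+t}\le 2^t(a+b)^{s+t}$; when $b>a\ge 1$ one has $1+b\le 2b$ and $a\le b$, so $a^s(1+b)^t\le 2^t b^{s+t}\le 2^t(a+b)^{s+t}$. Applying this with $s=p/2$, $t=q/2$, $a=\mu_i$, $b=\nu_j$ and factorizing the double sum gives
\[
\Tr\bigl((1+\widehat{D}^2)^{-(p+q)/2}\bigr)\le 2^{1+q/2}\Bigl(\sum_i\mu_i^{-p/2}\Bigr)\Bigl(\sum_j(1+\nu_j)^{-q/2}\Bigr)<\infty,
\]
which is exactly $(p+q)$-summability. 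I expect the whole difficulty to be concentrated in finding the right pointwise inequality; the cancellation in $\widehat{D}^2$ and the factorization of the eigenvalue sum are routine once that estimate is available.
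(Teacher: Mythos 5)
Your proof is correct and follows essentially the same route as the paper: diagonalize $1+\widehat{D}^2$ via the eigenvalues of $D$ and $M_l$, then factorize the double sum using a pointwise inequality separating the exponents $p$ and $q$. The only cosmetic difference is your choice of inequality $(a+b)^{-(s+t)}\le 2^{t}a^{-s}(1+b)^{-t}$, which picks up a harmless constant, whereas the paper uses $(x+y-1)^{\alpha+\beta}\ge x^{\alpha}y^{\beta}$ for $x,y>1$ (with $x=1+\lambda_n^2$, $y=1+\mu_m^2$) and needs no constant at all.
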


This follows as in the case of the external product of two spectral triples: 
if $\lambda_{n}$ and $\mu_{m}$ are respectively the sequences of the eigenvalues of $D$ and $M_{l}$, by assumption the sequences $((1+\lambda_{n}^{2})^{-p/2})$ and $((1+\mu_{m}^{2})^{-q/2})$ are convergent. Then, using the inequality
	\begin{displaymath}
(x+y-1)^{\alpha + \beta} \geq x^{\alpha}y^{\beta}, \qquad x,y>1, \, \alpha,\beta>0,
	\end{displaymath}
the double sequence $(1+\lambda_{n}^{2} + \mu_{m}^{2})^{-(p+q)/2}$  also proves convergent. 
	
We pass now to another property.

\begin{defn}
	A spectral triple $(\mathcal{A},H,D)$ is \emph{irreducible} if there is no closed subspace of $H$ invariant under the action of $A$ and $D$.  	
\end{defn}

\begin{prop}Let $(\mathcal{A}, H, D)$ and $G$ as in Assumptions \ref{assumption1}. Suppose further that $G$ acts smoothly and equicontinuously on $A$. If $(A, H, D)$ is irreducible then $(A\rtimes_{\alpha, r}G, \widehat{H}, \widehat{D}, \hat{\pi}_{1}\rtimes \hat{\lambda})$ is also irreducible. 	
\end{prop}

\begin{rmk}\label{34}
	Similarly to what described so far, we can construct odd spectral triples on crossed products starting from even spectral triples and an equicontinuous action in a suitable sense. Let
	\begin{displaymath}
		\left(\mathcal{A},H= H_{0}\oplus H_{1}, D=\left(\begin{matrix}
			0 & D_{0} \\
			D_{0}^{*} & 0
		\end{matrix}\right)\right)
	\end{displaymath} be an even spectral triple on a unital $C^{*}$-algebra $A$ with the $\mathbb{Z}^{2}$-grading  $H_{0}\oplus H_{1}$ and $\pi=\pi_{0}\oplus \pi_{1}$. Let $\alpha$ be an action of a discrete group $G$ on $A$ and let $l\colon G\rightarrow \mathbb{R}$ be a proper Dirac weight. We have a diagonal representation of the reduced crossed product $A\rtimes_{\alpha,r}G$ on $(H_{0}\otimes \ell^{2} (G))\oplus (H_{1}\otimes \ell^{2}(G))$. Provided $\alpha_{g}(\mathcal{A})\subseteq \mathcal{A}$ for all $g\in G$ and the equicontinuity condition
	\begin{displaymath}
		\sup_{g\in G}\norm{\pi_{0}(\alpha_{g}(x))D_{0} - D_{0}\pi_{1}(\alpha_{g}(x))}<+\infty, \qquad \forall x\in \mathcal{A}
	\end{displaymath}
	then the Dirac operator
	\begin{equation}\label{dual1}
		\check{D}=\left(\begin{matrix}
			1\otimes M_{l} & D_{0}\otimes 1 \\
			D_{0}^{*}\otimes 1 & -1\otimes M_{l}
		\end{matrix}\right)
	\end{equation}
	can be used to define an odd spectral triple $(C_{c}(G,\mathcal{A}), \check{H}, \check{D})$ on $A\rtimes_{\alpha,r} G$ for $\check{H} = H\otimes \ell^{2}(G)$. In the same way as for odd spectral triples, we have that this triple:
	\begin{enumerate}
		\item  is non-degenerate whenever the Dirac weight $l$ and the triple $(A,H,D)$ are non-degenerate 
		\item represents the image of the even spectral triple $(A,H,D,\rchi)$ under the Pimsner-Voiculescu boundary map when one considers the group $G=\mathbb{Z}$ and the Dirac weight  given by the inclusion $\imath \colon \mathbb{Z}\rightarrow \mathbb{R}$. \demo
	\end{enumerate} 
	\end{rmk}

\subsection{The Equivariant Construction}\label{sec3.2}\indent

As noticed in \cite[Remark 2.9]{hawkins2013spectral}, there is an alternative though unitarily equivalent (and thus K-homologically equivalent) description of the construction in Theorem \ref{thm} starting from a spectral triple which is equivariant in the following sense. 

\begin{defn}
	Let $(A,G,\alpha)$ be a dynamical system. A spectral triple $(\mathcal{A},H,D)$ on a unital $C^{*}$-algebra $A$ is \emph{equivariant} with respect to the action of $G$, or simply $G$-equivariant, if there exists  a unitary representation $u\colon G\rightarrow \mathcal{L}(H)$ such that:
	\begin{enumerate}
		\item $(\pi, u)$ is a covariant representation of $(A,G,\alpha)$ on $H$.
		\item The operators $u_{g}:=u(g)$  leave the domain of $D$ invariant for all $g\in G$.
		\item The commutator $[u_{g}, D]$  extends to a bounded operator on $H$ for every $g\in G$. This means that the difference 
		\begin{equation}\label{43}
			u_{g}Du_{g}^{*} - D = [u_{g}, D]u_{g}^{*}
		\end{equation}
		is bounded, making the triple an equivariant (unbounded) Kasparov module. 
	\end{enumerate}
	When $[D,u_{g}]=0$ for every $g\in G$  
	we just say that the triple $(\mathcal{A},H,D)$ is $G$-invariant.
\end{defn}

Note that the bounded transform of an equivariant spectral triple defines an equivariant Kasparov module as defined in Appendix \ref{appA} (see \cite{pierrot2006bimodules}). Furthermore, any $G$-equivariant spectral triple $(\mathcal{A},H,D, \pi, u)$ defines a spectral triple $(C_{c}(G,\mathcal{A}), H, D,\pi\rtimes u )$ on the \emph{maximal}  crossed product $A\rtimes_{\alpha}G$ as the commutators
\begin{displaymath}
	[D,\pi(a)u_{g}] = 	[D,\pi(a)]u_{g} + 	\pi(a)[D,u_{g}]
\end{displaymath}
are bounded by hypothesis. Viceversa, using the universal properties of the maximal crossed product, any spectral triple on $A\rtimes_{\alpha} G$ comes from a $G$-equivariant spectral triple. 
We can think of this association as the unbounded version of the well known Green-Julg isomorphism  
\begin{displaymath}
	KK^{G}(A,\mathbb{C})\simeq KK(A\rtimes_{\alpha} G,\mathbb{C})
\end{displaymath} for discrete groups (see \cite{julg1981k} and \cite[Example 4.9]{echterhoff2017bivariant}). 

\begin{exa}[cf. \cite{rieffel1981c,gracia2013elements}]\label{nctorus}
Consider the unit circle $S^{1}$ as the additive group $\mathbb{R}/\mathbb{Z}$ endowed with the quotient structure. We identify functions $f\in C(S^{1})$ with continuous periodic functions  on $\mathbb{R}$ of period $1$. Fix now $\theta\in \mathbb{R}$ and consider the  action of $\mathbb{Z}$ on $C(S^{1})$ given by 
\begin{displaymath}
	\alpha_{n}(f)(t) = f(t+n\theta)
\end{displaymath}	
for $n\in \mathbb{Z}$ and $t\in \mathbb{R}$; it is known  that $(C(S^{1}),\mathbb{Z}, \alpha)$ is a $C^{*}$-dynamical system. The spectral triple $(C(S^{1}), L^{2}(S^{1}), D )$, where $D$ is the self-adjoint extension of the operator $-i\frac{\partial}{\partial x}$ on $L^{2}(S^{1})$, is $\mathbb{Z}$-equivariant with respect to the unitary representation $u$ of $\mathbb{Z}$ on $L^{2}(S^{1})$ given by $u_{n}(f)(t) = f(t+n\theta).$ Note further that $[D,u] =0$ and so the triple is actually $\mathbb{Z}$-invariant. \demo
\end{exa}

Following \cite[Remark 2.9]{hawkins2013spectral}, let us now show another strategy to construct spectral triples on crossed products starting from equivariant spectral triples. 

\begin{ass}\label{assumptions2}
	Let  $(\mathcal{A}, H,D,u)$ be a $G$-equivariant spectral triple  on a unital $C^{*}$-algebra $A$ with $\pi$ faithful and $G$ a discrete group which acts continuously on $A$ by $\alpha$ and is endowed with a proper Dirac weight $l\colon G\rightarrow \mathbb{R}$.
\end{ass}

 The weight $l$  defines an equivariant odd spectral triple $(\mathbb{C}, \ell^{2}(G), M_{l})$ where the group action on $\ell^{2}(G)$ is given by the left regular representation. Note that the commutators $[M_{l}, z]$ are vanishing for all $z\in \mathbb{C}$. The  Kasparov product of $[D]\in KK_{1}^{G}(A,\mathbb{C})$ and $[M_{l}]\in KK_{1}^{G}(\mathbb{C}, \mathbb{C})$ is represented by the even $G$-equivariant  triple 
\begin{equation}\label{25}
(\mathcal{A}, \widehat{H}, \widehat{D}),
\end{equation}
on $A$ where $\widehat{H}=H\otimes\ell^{2}(G)\otimes \mathbb{C}^{2}$, $\widehat{D}$ is defined as in \eqref{8}, the representation of $A$ on $\widehat{H}$ is given by (two copies of) 
\begin{displaymath}
\hat{\pi}_{2}(a) (\xi\otimes \delta_g) = \pi(a)\xi\otimes \delta_g, \qquad a\in A,\xi\in H, g\in G
\end{displaymath} 
and the equivariance is implemented by (two copies of) the representation 
$\hat{\Gamma}\colon G\rightarrow \mathcal{L}(\widehat{H})$ given by 
\begin{equation}\label{gamma}
\hat{\Gamma}_{h}(\xi\otimes \delta_g) = u_{h}\xi\otimes \delta_{hg}
\end{equation} 
for $g,h\in G$. Under the Green-Julg isomorphism, the class of the triple \eqref{25} is represented in $KK(A\rtimes_{\alpha}G, \mathbb{C})$ by the triple 
\begin{equation}\label{25bis}
	(C_{c}(G,\mathcal{A}), \widehat{H}, \widehat{D})
\end{equation} 
where the action of the algebra is now given by (two copies of) the integrated form of the covariant representation $(\hat{\pi}_{2}, \hat{\Gamma})$. 

\begin{exa}
	If we apply this procedure to the equivariant spectral triple of Example \ref{nctorus} where $\mathbb{Z}$ is endowed with the proper weight $i\colon \mathbb{Z}\rightarrow \mathbb{R}$ given by the inclusion, we just get the canonical spectral triple on the noncommutative $2$-torus $C(S^{1})\rtimes_{\alpha}\mathbb{Z}$. Indeed, it is known that the GNS Hilbert space representation $H_{\tau}$ is isomorphic to $L^{2}(S^{1}\times S^{1})$ by mapping the generators of the NC torus $U,V$ to the functions $e^{2\pi i\varphi_{1}}$ and $e^{2\pi i\varphi_{2}}$. Under Fourier transform on the second entry, this isomorphism is then just the map which takes an element $f\otimes \delta_{m}\in C(S^{1})\rtimes_{\alpha}\mathbb{Z}\subseteq H_{\tau}$ and maps it to $f\otimes \delta_{m}$ in $L^{2}(S^{1})\otimes \ell^{2}(\mathbb{Z})$. With this identification, the GNS representation of $C(S^{1})\rtimes \mathbb{Z}$  is just the integrated form of the covariant couple $(\hat{\pi}_{2}, \hat{\Gamma})$, where $\hat{\pi}_{2}$ is the multiplication operator of $C(S^{1})$ and $\hat{\Gamma}$ is defined as in \eqref{gamma} for $u$ as in Example \ref{nctorus}. \demo
	
\end{exa}

Note that, differently from what was discussed in Subsection \ref{47}, the action $\alpha$ in this case need neither be smooth nor equicontinuous in order to ensure a well defined and bounded commutation relation between $\widehat{D}$ and $A\rtimes G$. We have the following result.

\begin{lemma}\label{44}Let $(\mathcal{A}, H,D,u)$ be a $G$-equivariant spectral triple on a unital $C^{*}$-algebra $A$. We have that:
	\begin{enumerate}
		\item  $\alpha_{g}(C^{Lip}(A))\subseteq C^{Lip}(A)$ for any $g\in G$.
		\item If the commutator $[u_{g}, D]$ is uniformly bounded in norm for all $g\in G$, then  the action $\alpha$ of $G$ on $A$ is equicontinuous. 
		\item If  $(\mathcal{A},H,D,u)$ is $G$-invariant, then  the action is Lip-isometric in the sense that 
		\begin{displaymath}
			\norm{[D,\pi(\alpha_{g}(a))]} = 	\norm{[D,\pi(a)]}.	
		\end{displaymath} 
	\end{enumerate}
	
\end{lemma}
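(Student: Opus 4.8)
The common engine behind all three assertions is the covariance identity $\pi(\alpha_g(a)) = u_g\pi(a)u_g^*$ (condition (1) of equivariance) combined with a Leibniz-type expansion of the commutator $[D,\pi(\alpha_g(a))]$. The plan is to establish two auxiliary facts about $u_g^*$, then expand this commutator once and for all; each of the three statements then drops out by reading off the relevant feature of that single expansion.

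First I would record that $u_g^*$ also preserves $\Dom D$ (it is the inverse of the surjective isometry $u_g$, which preserves the domain) and that $[D,u_g^*]$ is bounded. For the latter, self-adjointness of $D$ gives, on $\Dom D$, the relation $[D,u_g^*] = -([D,u_g])^*$, so boundedness of $[D,u_g]$ (condition (3) of equivariance) forces boundedness of $[D,u_g^*]$, with the same norm. With these in hand, an insertion/subtraction argument applied to $D\,u_g\pi(a)u_g^*\xi - u_g\pi(a)u_g^*\,D\xi$ on the dense domain $\Dom D$ yields, for $a\in C^{Lip}(A)$,
\[
[D,\pi(\alpha_g(a))] = [D,u_g]\,\pi(a)\,u_g^* + u_g\,[D,\pi(a)]\,u_g^* + u_g\,\pi(a)\,[D,u_g^*].
\]

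For part (1), domain invariance of $\pi(\alpha_g(a)) = u_g\pi(a)u_g^*$ follows by applying in turn $u_g^*$, then $\pi(a)$, then $u_g$, each of which preserves $\Dom D$; and the three summands above are each bounded (the outer factors being unitary, the middle commutators bounded by the auxiliary facts and by $a\in C^{Lip}(A)$), so $[D,\pi(\alpha_g(a))]$ extends boundedly and $\alpha_g(a)\in C^{Lip}(A)$. For part (2) I would take norms in the displayed identity and use $\norm{u_g}=\norm{u_g^*}=1$ together with $\norm{[D,u_g^*]}=\norm{[D,u_g]}$ to obtain $\norm{[D,\pi(\alpha_g(a))]}\le 2\norm{[D,u_g]}\,\norm{\pi(a)} + \norm{[D,\pi(a)]}$; the uniform bound on $\norm{[D,u_g]}$ makes the right-hand side independent of $g$, which is exactly the equicontinuity estimate \eqref{81}. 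For part (3), invariance $[D,u_g]=0$ (whence also $[D,u_g^*]=0$) annihilates the first and third summands, leaving $[D,\pi(\alpha_g(a))] = u_g[D,\pi(a)]u_g^*$; since conjugation by the unitary $u_g$ is isometric on bounded operators, the asserted equality of norms is immediate.

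The only genuinely delicate point is justifying the displayed Leibniz expansion on $\Dom D$: one must verify that each intermediate vector ($u_g^*\xi$, and then $\pi(a)u_g^*\xi$) lands back in $\Dom D$, so that every application of $D$ is legitimate, and that the telescoping of the commutator into the three terms is valid termwise. This is precisely the domain bookkeeping already implicit in the definition of $C^{Lip}(A)$, and it is handled by the two auxiliary facts about $u_g^*$ established at the outset; beyond that the argument is purely algebraic.
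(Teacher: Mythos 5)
Your proposal is correct and follows essentially the same route as the paper: both rest on the covariance identity $\pi(\alpha_g(a))=u_g\pi(a)u_g^*$, a telescoping expansion of $[D,\pi(\alpha_g(a))]$, and the resulting estimate $\norm{[D,\pi(\alpha_{g}(a))]}\leq 2\norm{[D,u_{g}]}\norm{\pi(a)} + \norm{[D,\pi(a)]}$, with part (3) read off from the surviving term $u_g[D,\pi(a)]u_g^*$. The only difference is cosmetic: you keep the three-term Leibniz expansion, while the paper regroups the outer two terms into the single commutator $\bigl[u_g[D,u_g^*],\pi(\alpha_g(a))\bigr]$ (its equation (3.13), which as printed is off by a sign from $[D,u_g]u_g^*$, a slip your version avoids); your explicit domain bookkeeping for $u_g^*$ and the identity $[D,u_g^*]=-([D,u_g])^*$ are exactly the details the paper leaves implicit.
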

\begin{proof}As $\pi(\alpha_{g}(a))=u_{g}\pi(a)u_{g}^{*}$ we have that any element $\alpha_{g}(a)$ for $a\in C^{Lip}(A)$ preserves the domain of $D$.
	Using the covariance it is further easy to see that
	\begin{equation}\label{36}
		[D,\pi(\alpha_{g}(a))] = -\left[ u_{g}[D,u_{g}^{*}], \pi(\alpha_{g}(a))\right] + u_{g}[D,\pi(a)]u_{g}^{*}.
	\end{equation} and so the commutator $[D,\pi(\alpha_{g}(a))]$ is bounded. To prove the second point we note that the action is isometric as $G$ acts by automorphisms. Then from equation \eqref{36} we have
	\begin{displaymath}
		\norm{[D,\pi(\alpha_{g}(a))]}\leq 2\norm{[D,u_{g}]}\norm{\pi(a)} + \norm{[D,\pi(a)]}
	\end{displaymath}
	for any $a\in A$ and $g\in G$. By assumption, the right hand side is uniformly bounded in $g\in G$ and so we have equicontinuity.  Point $(3)$ is an easy consequence of equation \eqref{36}. 
\end{proof}

We now want to show that the spectral triple \eqref{25bis} descends to a triple on the \emph{reduced} crossed product and that, under some extra assumptions, this triple is K-homologically equivalent to the one of Theorem \ref{thm}. Consider the  unitary map $U$ defined on $H\otimes \ell^{2}(G)$ by
\begin{equation}
	U(\xi\otimes \delta_{g}) = u_{g}\xi\otimes \delta_{g}.
\end{equation}
This conjugates the action $\hat{\pi}_{1}\rtimes \hat{\lambda}$  to the action $\hat{\pi}_{2}\rtimes \hat{\Gamma}$ as
\begin{displaymath}
	U\hat{\pi}_{1}(a)U^{*}(\xi\otimes \delta_{g}) = u_{g}\pi(\alpha_{g}^{-1}(a))u_{g}^{*}\xi\otimes \delta_{g} = \pi(a)\xi\otimes \delta_{g} = \hat{\pi}_{2}(a)(\xi\otimes \delta_{g})
\end{displaymath}
and
\begin{displaymath}
	U\hat{\lambda}_{h}U^{*}(\xi\otimes \delta_{g}) = u_{hg}u_{g}^{*}\xi\otimes \delta_{hg} =  \hat{\Gamma}_{h}(\xi\otimes \delta_{g}).
\end{displaymath}
In particular, the representation of the maximal crossed product of the triple \eqref{25bis} is unitarily equivalent to the representation defining the reduced crossed product and so it descends to a representation of $A\rtimes_{\alpha,r}G$.

 The relation between $(C_{c}(G,\mathcal{A}), \widehat{H}, \widehat{D},\hat{\pi}_{2}\rtimes \hat{\Gamma})$ and $(C_{c}(G,\mathcal{A}), \widehat{H}, \widehat{D},\hat{\pi}_{1}\rtimes \hat{\lambda})$ as triples on $A\rtimes_{\alpha,r}G$ is then easy to understand: \emph{if the commutator $[u_{g}, D]$ is uniformly bounded in norm for all $g\in G$, then the triple $(C_{c}(G,\mathcal{A}), \widehat{H}, \widehat{D},\hat{\pi}_{1}\rtimes \hat{\lambda})$  is well defined (by point (2) in Lemma \ref{44}) and defines the same K-homology class of the triple $(C_{c}(G,\mathcal{A}), \widehat{H}, \widehat{D},\hat{\pi}_{2}\rtimes \hat{\Gamma})$ as the unitary $U$ conjugates the representations and $\widehat{D}$ to $(U\oplus U)\widehat{D}(U\oplus U)^{*}$, which is a bounded perturbation of $\widehat{D}$ by hypothesis.}

 As a direct consequence of this discussion we have that, using the Green-Julg isomorphism, if the triple $(\mathcal{A}, H,D)$ is equivariant with $[D,u_{g}]$ uniformly bounded for any $g\in G$ then the triple $(C_{c}(G,\mathcal{A}), \widehat{H}, \widehat{D},\hat{\pi}_{1}\rtimes \hat{\lambda})$ defined in Theorem \ref{thm} represents the \emph{exterior} equivariant Kasparov product of $[D]\in KK^{G}_{1}(A, \mathbb{C})$ with $ [M_{l}]\in KK^{G}_{1}(\mathbb{C}, \mathbb{C})$.

We thank the anonymous referee for pointing out that the spectral triple \eqref{25bis} can be  seen also as a representative of an \emph{interior} Kasparov product as follows. It is known that in bounded KK-theory there is a commutative diagram 
	\begin{figure}[h]
		\centering
		\begin{tikzpicture}{scale =1.4}
			
			\node (A) at (0,0) {$KK^{G}_{1}(A,\mathbb{C})\times KK^{G}_{1}(\mathbb{C},\mathbb{C})$};
			\node (B) at (7,0) {$KK^{G}_{0}(A,\mathbb{C})$};
			\node (C) at (7,-2) {$KK_{0}(A\rtimes_{\alpha} G, \mathbb{C}).$};
			\node (D) at (0,-2) {$KK_{1}(A\rtimes_{\alpha} G,C^{*}(G))\times KK_{1}(C^{*}(G), \mathbb{C})$};

			\path[->,font=\scriptsize,=angle 90]
			(A) edge  node[above]{$\otimes_{\textup{ext}}$} (B)
			(B) edge node[right]{$I_{G}$} (C)
			(A) edge node[right]{$J_{G}\times I_{G}$}(D)
			(D) edge node[above]{$\otimes_{C^{*}(G)}$} (C);
			
		\end{tikzpicture}
	\end{figure}

 Here $J_{G}$ is the Kasparov descent map (see Appendix \ref{appA}), $I_{G}$ is the Green-Julg isomorphism,  $\otimes_{C^{*}(G)}$ represents the interior Kasparov product and $\otimes_{\textup{ext}}$ is the exterior Kasparov product. This is a consequence of the fact that the Kasparov descent is compatible with the product and that it factorizes the Green-Julg isomorphism together with the map
 \begin{displaymath}
 	KK(A\rtimes_{\alpha}G, C^{*}(G))\longrightarrow KK(A\rtimes_{\alpha}G, \mathbb{C})
 \end{displaymath}
 induced by the trivial representation of $G$ (see e.g. \cite[Remark 1.15]{antonini2016bivariant}). We will now show the following fact:

\begin{thm}\label{mainthm}Let $[D]\in KK_{1}^{G}(A,\mathbb{C})$ and $[M_{l}]\in KK_{1}^{G}(\mathbb{C},\mathbb{C})$ be classes defined by two spectral triples as in Assumptions \ref{assumptions2} and let $[\widehat{D}]\in KK(A\rtimes_{\alpha}G, \mathbb{C})$ be the class defined by the spectral triple \eqref{25bis}. Then 
\begin{equation}\label{Kproduct}
	[\widehat{D}] =  J_{G}[D]\otimes_{C^{*}(G)} I_{G}[M_{l}]. 
\end{equation}

\end{thm}

The proof that $\widehat{D}$ represents the Kasparov product \eqref{Kproduct} relies on the straightforward check of the sufficient conditions provided by the  Kucerovsky criterion \cite{kucerovsky1997kk}, which we  briefly recall in a version for odd ungraded cycles. 
 
\begin{thm}[Theorem $7.2$ in \cite{kaad2013spflow}, cf. Theorem 13 in \cite{kucerovsky1997kk}]\label{Kuce}
	Let $(X,D_{1})$ and $(Y,D_{2})$ be odd ungraded unbounded cycles for $(A,B)$ and $(B,C)$ respectively. Let $E\coloneqq (X\otimes_{B}Y)\otimes \mathbb{C}^{2}$ with grading $\gamma = \left(\begin{matrix}
		1 & 0 \\ 
		0 & -1
	\end{matrix}\right)$ and denote $\sigma_{1} = \left(\begin{matrix}
	0 & -i \\
	i & 0
\end{matrix}\right)$, $\sigma_{2} = \left(\begin{matrix}
0 & 1 \\
1 & 0
\end{matrix}\right)$. Let $T_{x}\colon Y\otimes \mathbb{C}^{2} \rightarrow E$ be the creation operator
\begin{displaymath}
	T_{x}(y_{1},y_{2}) \coloneqq (x\otimes y_{1}, x\otimes y_{2})
\end{displaymath}
for $x\in X$ and $y_{1},y_{2}\in Y$. Assume that $\overline{D}$ is an odd operator such that $(E,\overline{D})$ is an even unbounded cycle for $(A,C)$ such that:
\begin{enumerate}
	\item for all $x$ in a dense submodule of $X$, the graded commutators
	\begin{displaymath}
		\left[\left(\begin{matrix}
			\overline{D} & 0 \\
			0 & D_{2}\sigma_{1}
		\end{matrix}\right),\left(\begin{matrix}
		0 & T_{x} \\
		T_{x}^{*} & 0
	\end{matrix}\right)\right] \colon \Dom \overline{D} \oplus ((\Dom D_{2})\otimes \mathbb{C}^{2}) \rightarrow E\oplus (Y\otimes \mathbb{C}^{2})
	\end{displaymath}
extend to  bounded operators.
\item $\Dom \overline{D}\subseteq \Dom (D_{1}\otimes 1)\oplus \Dom(D_{1}\otimes 1)$
\item There exists $K\geq 0$ such that
\begin{displaymath}
	\langle (D_{1}\otimes 1)\sigma_{2}\xi, \overline{D}\xi\rangle + \langle \overline{D}\xi,  (D_{1}\otimes 1)\sigma_{2}\xi \rangle \geq -K\langle \xi,\xi\rangle 
\end{displaymath}
for all $\xi \in \Dom \overline{D}$.
\end{enumerate}
Then $(E,\overline{D})$ represents the internal Kasparov product of $(X,D_{1})$ and $(Y,D_{2})$. 
\end{thm}
 
 We are now ready to prove our result.
 \begin{proof}[Proof of  Theorem \ref{mainthm}]First of all, let us describe explicitely the image of the equivariant spectral triple $(\mathcal{A},H,D,u)$ under the Kasparov descent (we take for granted the definitions and conventions in Appendix \ref{appA}). From formula \eqref{app1}, consider $B=\mathbb{C}$ and  $C_{c}(G)$ acting on the right on $C_{c}(G,H)$ by right multiplication:
 	\begin{displaymath}
 		(\xi\otimes \delta_{g})\triangleleft \delta_{h} \coloneqq \xi\otimes \delta_{gh}
 	\end{displaymath}
 	for $\xi\in H$ and $\delta_{g}, \delta_{h}\in C_{c}(G)$. The completion of $C_{c}(G,H)$ with the $C^{*}(G)$-valued scalar product 
 	\begin{displaymath}
 		\langle \xi\otimes \delta_{g}, \mu\otimes \delta_{h}\rangle \coloneqq \langle \xi,\mu\rangle\delta_{g^{-1}h},
 	\end{displaymath}as introduced in formula \eqref{app2},
 	defines the right Hilbert $C^{*}(G)$-Hilbert module $H\rtimes G\simeq H\otimes C^{*}(G)$. Using formula \eqref{app3}, we see that the representation $\pi\colon A\rightarrow \mathcal{L}(H)$ induces a representation $\psi$ of  $A\rtimes G$ on $H\otimes C^{*}(G)$  by 
 	\begin{displaymath}
 		\psi(a\delta_{g})(\xi\otimes \delta_{h}) = \pi(a)u_{g}\xi\otimes \delta_{gh}.
 	\end{displaymath}
 	The image of $(\mathcal{A},H,D,u)$ under $J_{G}$ is then just $(C_{c}(G,A), H\otimes C^{*}(G), D\otimes 1,\psi)$.
 	
 	To compare the operator $\widehat{D}$ on $H\otimes \ell^{2}(G)\otimes \mathbb{C}^{2}$ with the operators $D_{1} = D\otimes 1$ on $X=H\otimes C^*(G)$ and $D_{2} = M_{l}$ on $Y=\ell^{2}(G)$, we use the unitary transformation
 	\begin{displaymath}
 		\Phi\colon H\otimes C^{*}(G)\otimes_{C^{*}(G)}\ell^{2}(G) \longrightarrow H\otimes \ell^{2}(G), \quad \Phi(\xi\otimes\delta_{g}\otimes_{C^{*}(G)} \delta_{h}) = \xi\otimes\delta_{gh}
 	\end{displaymath}
 	which is clearly well defined and its inverse is given by $\xi\otimes\delta_{g}\mapsto \xi\otimes\delta_{e}\otimes_{C^{*}(G)} \delta_{g}$. If we still denote by $\Phi$ the doubled map $\Phi\oplus \Phi$ with a slight abuse of notation, we check the conditions of Theorem \ref{Kuce} for $\overline{D}=\Phi^{-1}\widehat{D}\Phi$ as follows:
 	\begin{enumerate}
 		\item Let us fix  $X_{1} \coloneqq (D_{1}-i)^{-1} H\otimes C_{c}(G)$ as the dense subset of $X$ and take $x = \xi\otimes \delta_{g}\in X_{1}$. The boundedness of the commutator of the criterion is equivalent to the boundedness of the following operators:
 		\begin{displaymath}
 			\begin{dcases}
 				\overline{D}T_{x} - T_{x}D_{2}\sigma_{1} & \\
 				D_{2}\sigma_{1}T_{x}^{*} -  T_{x}^{*}\overline{D} & 
 			\end{dcases}
 		\end{displaymath}
 	Let us check the first condition, being the other similar. Given $y_{1},y_{2}\in Y$, we have that $(\overline{D}T_{x} - T_{x}D_{2}\sigma_{1})(y_{1},y_{2})$ is equal to
 	\begin{displaymath}
 		\left(\begin{matrix}
 			D_{1}x\otimes y_{2} -i\Phi^{-1}(1\otimes M_{l})\Phi(x\otimes y_{2}) + ix\otimes M_{l}y_{2} \\
 			D_{1}x\otimes y_{1} +i\Phi^{-1}(1\otimes M_{l})\Phi(x\otimes y_{1}) - ix\otimes M_{l}y_{1} \\
 		\end{matrix}\right).
 	\end{displaymath}
 Since $\Phi^{-1}(1\otimes M_{l})\Phi(x\otimes y_{2}) = x\otimes M_{l}y_{2} + x\otimes \lambda_{g}^{*}[M_{l}, \lambda_{g}]y_{2}$, the previous vector becomes
 	\begin{displaymath}
 	\left(\begin{matrix}
 		D_{1}x\otimes y_{2} -ix\otimes \lambda_{g}^{*}[M_{l}, \lambda_{g}]y_{2} \\
 		D_{1}x\otimes y_{1} +ix\otimes \lambda_{g}^{*}[M_{l}, \lambda_{g}]y_{1}\\
 	\end{matrix}\right)
 \end{displaymath}
which is clearly bounded.
 
 		\item This is immediate since $\Dom \widehat{D} = (\Dom (D\otimes 1)\sigma_{2})\cap \Dom((1\otimes M_{l})\sigma_{1})$. 
 		\item Since the operators $(D\otimes 1)$ and $(1\otimes M_{l})$ commute on $H\otimes \ell^{2}(G)$,  they also commute as operators on $X\otimes_{C^{*}(G)}Y$ after the transformation with $\Phi$. In particular, since $\Phi^{-1}(D\otimes 1) \Phi = D_{1}\otimes 1$, we have that:
 		\begin{displaymath}
 				\langle (D_{1}\otimes 1)\sigma_{2}\xi, \overline{D}\xi\rangle + \langle \overline{D}\xi,  (D_{1}\otimes 1)\sigma_{2}\xi \rangle = 2\langle (D_{1}\otimes 1)\sigma_{2}\xi, (D_{1}\otimes 1)\sigma_{2}\xi\rangle \geq 0.
 		\end{displaymath}
 	The third condition of Theorem \ref{Kuce} is then fulfilled for $K=0$.
 	\end{enumerate}
 	
 \end{proof}

The spectral triple $(C_{c}(G,\mathcal{A}), \widehat{H}, \widehat{D},\hat{\pi}_{2}\rtimes \hat{\Gamma})$ on $A\rtimes_{\alpha,r}G$ as defined in \eqref{25bis} and subsequent discussions has been constructed starting from an equivariant spectral triple: as we are going to show, this triple is  also equivariant  with respect to the dual coaction of $G$. 
 This is not a surprise as it is known that the Kasparov descent map is just the Baaj-Skandalis isomorphism \cite{baaj1989c,blackadar1998k} composed with the functor with forgets the equivariance.  But first, we need to recall what  coactions of $ C^{*}_{r}(G)$ and $ C^{*}_{r}(G)$-comodules are (which, to simplify terminology, are in the following called
respectively coactions of $ G$ and of $ G$-comodules): 

\begin{defn}[cf.\cite{timmermann2008invitation}]\label{77}
	We say that $G$ coacts on a  unital $C^{*}$-algebra $B$ if there exists a unital $C^{*}$-homomorphism (called a \emph{coaction}) $\theta \colon B\rightarrow B\otimes C^{*}_{r}(G)$ such that:
	\begin{enumerate}
		\item $(\theta\otimes \textup{id}) \theta = (\textup{id}_{B}\otimes \Delta)\theta$,
		\item $\linspan\Set{\theta(a)(1_{B}\otimes b) | \, a\in B, b\in C^{*}_{r}(G)}$ is norm dense in $B\otimes C^{*}_{r}(G) $,
	\end{enumerate}
	where the map $\Delta\colon C^{*}_{r}(G)\rightarrow C^{*}_{r}(G)\otimes C^{*}_{r}(G)$  is given by $\Delta(\delta_{g}) = \delta_{g}\otimes \delta_{g}$. In this case we say that $B$ is a \emph{right $G$-comodule}.  For the coaction we adopt an analogue of the Sweedler notation for the coproduct and we denote an element $\theta(b)= \sum_{i=1}^{n} b_{i}\otimes c_{i}$ with $b_{i}\in B$ and $c_{i}\in C^{*}_{r}(G)$ just by 
	\begin{equation}\label{51}
		\theta(b) = \sum b_{(-1)}\otimes b_{(0)},
	\end{equation}
omitting the summation index. 
\end{defn}

\begin{exa}[Dual Coaction]\label{79}
	Consider a $C^{*}$-dynamical system $(A, G,\alpha)$   with $A$ unital and (for simplicity) assume that $G$ is discrete, 
	and set $B=A\rtimes_{\alpha,r}G$. 
	The maps $ i_{A} \colon A\rightarrow 
	B\otimes C^{*}_{r}(G)
	$ and $i_{G}\colon G\rightarrow B\otimes C^{*}_{r}(G)$ given by 
	\begin{equation}\label{74}
		\begin{dcases}
			i_{A}(a) = a\delta_{e}\otimes \delta_{e} &  \\
			i_{G}(g) = 1_{A}\delta_{g}\otimes \delta_{g}, & \\
		\end{dcases}
	\end{equation}
	form a covariant representation of $(A,G,\alpha)$ on 
	$B\otimes C^{*}_{r}(G)$, i.e.,
	\begin{displaymath}
		i_{G}(g)	i_{A}(a)i_{G}(g)^{*} = \delta_{g}a\delta_{g}^{*}\otimes \delta_{g}\delta_{e}\delta_{g}^{*} = \alpha_{g}(a)\delta_{e}\otimes \delta_{e} = i_{A}(\alpha_{g}(a)).
	\end{displaymath}
	The integrated form $\widehat{\alpha}\coloneqq i_{A}\rtimes i_{G}\colon B\rightarrow B\otimes C^{*}_{r}(G)$,
	$a\delta_{g}\mapsto a\delta_{g}\otimes \delta_{g}$, 
	is a coaction (known as the \emph{dual coaction}) of $G$ on $A\rtimes_{\alpha,r}G$ since
	\begin{displaymath}
		(\widehat{\alpha}\otimes \textup{id})\circ \widehat{\alpha}(a\delta_{g}) = a\delta_{g}\otimes \delta_{g}\otimes \delta_{g} = (\textup{id}\otimes \Delta)\circ \widehat{\alpha}(a\delta_{g})
	\end{displaymath}
	and the density condition is trivially satisfied. When $G$ is  abelian, it is known that any coaction $\delta \colon B\rightarrow B\otimes C^{*}_{r}(G)$ is equivalent
	via Fourier transform to an action of the Pontryagin dual group $\widehat{G}$. In particular, 
	the dual coaction $\widehat{\alpha}= i_{A}\rtimes i_{G}$ 
	corresponds to  the \emph{dual action} 
	$\widetilde{\alpha}$ of $\widehat{G}$ on $A\rtimes_{\alpha,r}G$ given by 
	\begin{displaymath}
		\widetilde{\alpha}_{\gamma}(a\delta_{g}) = \overline{\gamma(g)}a\delta_{g}, \qquad \gamma\in \widehat{G}.
	\end{displaymath}
	\demo
\end{exa}

\begin{defn}[cf.\cite{timmermann2008invitation}]\label{76}
	A (unitary) \emph{corepresentation}  of $G$  (or more properly of $C^{*}_{r}(G)$) on a  Hilbert space $H$ is a linear map $\Theta \colon H\rightarrow H\otimes C^{*}_{r}(G)$ such that:
	\begin{enumerate}
		\item $(\Theta\otimes \textup{id}) \Theta = (\textup{id}_{H}\otimes \Delta)\Theta$
		\item $\Theta(H)C^{*}_{r}(G)$ is linearly dense in $H\otimes C^{*}_{r}(G)$
		\item 	$\langle \Theta(x)\, |\, \Theta(y) \rangle = \langle x,y\rangle \cdot 1_{C^{*}_{r}(G)}$ for all $x,y\in H$, 	where $\langle \cdot \,|\, \cdot \rangle$ is the usual scalar product on the external tensor product of the Hilbert modules $H_{\mathbb{C}}$ and $C^{*}_{r}(G)_{C^{*}_{r}(G)}$.   
	\end{enumerate}
\end{defn}

It turns out that when dealing with spectral triples, it is more convenient to see the corepresentations of $G$ on a Hilbert space $H$ as unitary operators $X\in \mathcal{L}(H\otimes C^{*}_{r}(G))$ such that 
\begin{equation}\label{corep}
	(\textup{id}\otimes \Delta)(X)=	X_{(12)}X_{(13)}.
\end{equation}
As shown in \cite[Prop 5.2.2]{timmermann2008invitation}, these two notions coincide in the following sense:
\begin{enumerate}
	\item If $\Theta\colon H\rightarrow H\otimes C^{*}_{r}(G)$ is a unitary corepresentation, then the map 
	\begin{equation}\label{80}
		X\colon H\odot C^{*}_{r}(G) \rightarrow H\otimes C^{*}_{r}(G), \qquad x\otimes q\mapsto \Theta(x)q
	\end{equation}
	extends to a unitary operator $X\in \mathcal{L}(H\otimes C^{*}_{r}(G))$ satisfying \eqref{corep}.
	\item If a unitary $X\in \mathcal{L}(H\otimes C^{*}_{r}(G))$ satisfies \eqref{corep}, then the map
	\begin{displaymath}
		\Theta\colon H\rightarrow H\otimes C^{*}_{r}(G), \qquad x\mapsto X(x\otimes 1_{C^{*}_{r}(G)})
	\end{displaymath}
	is a unitary corepresentation as in Definition \ref{76}. 
\end{enumerate}

The next definition combines the previous two.	

\begin{defn}[cf.\cite{bhowmick2011quantum}]\label{78}
	Let $(\mathcal{B}, H, D, \rchi)$ be an (even or odd) spectral triple on a $G$-comodule unital $C^{*}$-algebra $B$. We say that the triple is \emph{equivariant} for coactions of $G$ if there exists a dense subspace $W\subseteq H$ and a unitary corepresentation $\Theta\colon H \rightarrow H\otimes C^{*}_{r}(G)$ for which:
	\begin{enumerate}
		\item $W$ is a $C^{*}_{r}(G)$-equivariant $B$-module in the sense that for every $b\in B$ and $x\in H$,
		\begin{displaymath}
			\Theta(b\triangleright x) = b_{(-1)}\triangleright x_{(-1)}\otimes b_{(0)}x_{(0)},
		\end{displaymath} 
		\item the operatorial form $X$ of $\Theta$ as in \eqref{80} commutes with $D\otimes 1_{C^{*}_{r}(G)}$ and $\rchi\otimes 1_{C^{*}_{r}(G)}$
		\item $(\textup{id}\otimes \varphi)\textup{Ad}_{U}(b)\in B^{\prime\prime}$ for every $b\in B$ and every state $\varphi$ on $C^{*}_{r}(G)$.  
	\end{enumerate}
\end{defn}

\begin{prop}\label{35}
	The spectral triple $(C_{c}(G,\mathcal{A}), \widehat{H}, \widehat{D},\hat{\pi}_{2}\rtimes \hat{\Gamma})$ is equivariant with respect to the dual coaction $\widehat{\alpha}$ of Example \ref{79}.  	
\end{prop}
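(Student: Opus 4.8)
The plan is to exhibit the natural corepresentation living on the $\ell^{2}(G)$-leg of $\widehat{H}=H\otimes\ell^{2}(G)\otimes\C^{2}$ and to verify directly the three conditions of Definition \ref{78} against the dual coaction $\widehat{\alpha}(a\delta_{h})=a\delta_{h}\otimes\lambda_{h}$. First I would set $\Theta\colon\widehat{H}\to\widehat{H}\otimes C^{*}_{r}(G)$ to be the identity on the $H$- and $\C^{2}$-legs and $\delta_{g}\mapsto\delta_{g}\otimes\lambda_{g}$ on the middle leg, i.e.
\[
\Theta(\xi\otimes\delta_{g}\otimes v)=\xi\otimes\delta_{g}\otimes v\otimes\lambda_{g},
\]
with operatorial form $X=1_{H}\otimes X_{0}\otimes 1_{\C^{2}}$, where $X_{0}(\delta_{g}\otimes q)=\delta_{g}\otimes\lambda_{g}q$ is a leg of the fundamental unitary of $G$.

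Checking that $\Theta$ is a unitary corepresentation as in Definition \ref{76} is routine: the identity $(\Theta\otimes\textup{id})\Theta=(\textup{id}\otimes\Delta)\Theta$ reduces to $\Delta(\lambda_{g})=\lambda_{g}\otimes\lambda_{g}$, the density of $\Theta(\widehat{H})C^{*}_{r}(G)$ is immediate since the vectors $\delta_{g}\otimes\lambda_{g}\lambda_{h}=\delta_{g}\otimes\lambda_{gh}$ exhaust the relevant legs, and the isometry property follows from $\langle\Theta(\delta_{g})\mid\Theta(\delta_{h})\rangle=\delta_{g,h}\,\lambda_{g^{-1}h}=\delta_{g,h}\,1$.

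Next I would take $W$ to be the algebraic span of the vectors $\xi\otimes\delta_{g}\otimes v$ (with $\xi$ ranging over a core for $D$) and verify condition (1) on these generators. For $b=a\delta_{h}$ one has $b\triangleright x=\pi(a)u_{h}\xi\otimes\delta_{hg}\otimes v$, so $\Theta(b\triangleright x)=\pi(a)u_{h}\xi\otimes\delta_{hg}\otimes v\otimes\lambda_{hg}$; on the other side $b_{(-1)}\triangleright x_{(-1)}\otimes b_{(0)}x_{(0)}=\pi(a)u_{h}\xi\otimes\delta_{hg}\otimes v\otimes\lambda_{h}\lambda_{g}$, and $\lambda_{h}\lambda_{g}=\lambda_{hg}$ closes the check. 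For condition (2), since $X$ touches only the $\ell^{2}(G)$- and $C^{*}_{r}(G)$-legs it commutes with $\rchi\otimes1$ and with the summand $D\otimes1\otimes\sigma_{1}$ of $\widehat{D}\otimes1$ for leg-disjointness reasons; the one nontrivial point is the summand $1\otimes M_{l}\otimes\sigma_{2}$, and here $[X_{0},M_{l}\otimes1]=0$ because $X_{0}$ leaves each eigenline $\C\delta_{g}$ of $M_{l}$ invariant, merely copying the index $g$ into the $C^{*}_{r}(G)$-leg.

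Finally, condition (3) I expect to follow formally from (1): its operatorial form reads $X\big((\hat{\pi}_{2}\rtimes\hat{\Gamma})(b)\otimes1\big)X^{*}=\big((\hat{\pi}_{2}\rtimes\hat{\Gamma})\otimes\textup{id}\big)\widehat{\alpha}(b)$, whence slicing with any state $\varphi$ on $C^{*}_{r}(G)$ gives $(\hat{\pi}_{2}\rtimes\hat{\Gamma})\big((\textup{id}\otimes\varphi)\widehat{\alpha}(b)\big)\in(\hat{\pi}_{2}\rtimes\hat{\Gamma})(B)''=B''$. I expect the main obstacle to be purely organisational: keeping the four tensor legs straight throughout, and in condition (2) justifying the commutation of the bounded $X$ with the unbounded $\widehat{D}$ rigorously (on a common core, or through the resolvents and spectral projections of $M_{l}$), the decisive observation once more being that $X_{0}$ respects the spectral decomposition of $M_{l}$.
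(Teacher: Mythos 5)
Your proposal is correct and follows essentially the same route as the paper: the corepresentation $\Theta(\xi\otimes\delta_{g})=\xi\otimes\delta_{g}\otimes\delta_{g}$ with operatorial form $U(\xi\otimes\delta_{x}\otimes\delta_{g})=\xi\otimes\delta_{x}\otimes\delta_{xg}$ (your $X_{0}$), the same check of the equivariant-module condition on generators $b=a\delta_{h}$, commutation with $D\otimes1$ and $1\otimes M_{l}$ by leg-disjointness and by $X_{0}$ preserving the eigenspaces of $M_{l}$, and the slicing argument for condition (3). Your explicit verification of the corepresentation axioms and the remark that (3) follows formally from the operatorial form of (1) are slightly more detailed than the paper's, but the substance is identical.
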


\begin{rmk}\label{52}
	If the group $G$ is abelian, Proposition \ref{35} means that the triple on $A\rtimes_{\alpha,r}G$ is $\widehat{G}$-invariant under the unitary representation $V\coloneqq v\oplus v\colon \widehat{G} \rightarrow \mathcal{L}(\widehat{H})$ where $v\colon \widehat{G} \rightarrow \mathcal{L}(H\otimes \ell^{2}(G))$  is given by
	\begin{displaymath}
		  	v_{\rchi}(\xi\otimes \delta_{g}) = \overline{\rchi(g)}\xi\otimes \delta_{g}
	\end{displaymath} 
for $\xi\in H$, $g\in G$ and $\rchi\in \widehat{G}$.
\quad$\diamond$	
	%	\demo
\end{rmk}

\begin{proof} Consider $C^{*}_{r}(G)$ coacting on itself via the map $\Delta(\delta_{g})=\delta_{g}\otimes \delta_{g}$ and the unitary corepresentation  map $\Theta\colon H\otimes \ell^{2}(G)\rightarrow H\otimes \ell^{2}(G)\otimes C^{*}_{r}(G)$ given by 
	\begin{displaymath}
		\Theta(\xi\otimes \delta_{g}) = \xi\otimes \delta_{g}\otimes \delta_{g}, \qquad \xi\in H, \,g\in G.
	\end{displaymath}  	
	According to \eqref{80}, $\Theta$ is equivalent to the unitary operator $U\in \mathcal{L}(H\otimes \ell^{2}(G)\otimes C^{*}_{r}(G))$  given by 
\begin{displaymath}
	U(\xi\otimes \delta_{x}\otimes \delta_{g}) = \xi\otimes \delta_{x}\otimes \delta_{xg}, \qquad x,g\in G
\end{displaymath}
In this way $H\otimes \ell^{2}(G)$ becomes a $C^{*}_{r}(G)$-equivariant $C^{*}_{r}(G)$-module. Consider in fact $b=a\delta_{g}\in C_{c}(G,\mathcal{A})$ and $x=\xi\otimes \delta_{h}\in H\otimes C^{*}_{r}(G)$, then by definition $b_{(-1)}=a\delta_{g}$, $b_{(0)}=\delta_{g}$, $x_{(-1)}=\xi\otimes \delta_{h}$ and $x_{(0)}=\delta_{h}$. Next,
\begin{displaymath}
\begin{split}
		\Theta(b\triangleright x) &=	\Theta(\pi(a)u_{g}\xi\otimes \delta_{gx}) = \pi(a)u_{g}\xi\otimes \delta_{gx}\otimes \delta_{gx} \\
		& = b_{(-1)}\triangleright x_{(-1)}\otimes b_{(0)}x_{(0)}.
\end{split} 
\end{displaymath}
Moreover, it is easy to check that $[D\otimes 1\otimes 1, U]=0$ and $[1\otimes M_{l}\otimes 1, U]=0$ so that $ [\widehat{D}, U\oplus U] =0$.	Finally, we have that $U\hat{\pi}_{2}(a)\hat{\Gamma}_{h}U^{*}(\xi\otimes \delta_{x}\otimes \delta_{g}) = \pi(a)u_{h}\xi\otimes \delta_{hx}\otimes \delta_{hg}$ for any $x,g, h\in G$ and so
	\begin{displaymath}
		(\textup{id}\otimes \varphi)\textup{Ad}_{U}(a\delta_{h}) = \varphi(\delta_{hg})a\delta_{hx}\in C_{c}(G,\mathcal{A})\subseteq (A\rtimes_{r}G)^{\prime\prime}
	\end{displaymath}
	for any state $\varphi$ on $C^{*}_{r}(G)$.	
\end{proof}

We close this section by observing that 
what we have described so far extends to  
a construction of an equivariant odd spectral triple on the crossed product starting from an equivariant \emph{even} spectral triple, and this is unitarily equivalent to the construction described in Remark \ref{34} (assuming $D$ is 
$G$-invariant). Moreover, the aformentioned equivariant triple represents the equivariant Kasparov product of two equivariant triples under the Green-Julg isomorphism 
(for suitable grades)
and is invariant under the coaction of $G$.

\section{The Existence of a Real Structure}\label{J}In this section we construct a real structure on the spectral triple on a crossed product as constructed in Section \ref{sec3.2}, and present sufficient conditions for the first and  second order conditions. The idea is to employ   the tensor product $J_{1}\otimes J_{2}$ of two real structures on two spectral triples $(\mathcal{A}_1, H_{1}, D_{1})$ and 
$(\mathcal{A}_2, H_{2}, D_{2})$ which defines a real structure on the tensor product spectral triple such that the resultant KO-dimension is the sum of the two initial KO-dimensions (with some minor modifications in the case of a grading), cf.  \cite{dkabrowski2011product}. We will check that this construction remains valid also in the case of a crossed product extension. We discuss two cases depending on how the real structure $J$ on the triple $(\mathcal{A},H,D,u)$ interacts with the representation $u\colon G\rightarrow \mathcal{L}(H)$.

\subsection{First Case ($J$ unitarily invariant)} \label{J1} \indent

Let $G$ be a discrete group endowed with a proper Dirac weight $l\colon G\rightarrow \mathbb{R}$ and $(\mathcal{A},H, D,u)$ a $G$-invariant (even or odd) spectral triple on a unital $C^{*}$-algebra $A$ endowed with a real structure $J$ which is \emph{unitarily invariant}, i.e.,
\begin{equation}\label{uJu*}
u_{g}J u_{g}^{*} = J
\end{equation}
for every $g\in G$.   This is for example the case when the triple discussed in Example \ref{nctorus} is endowed with the antilinear operator $J_{1}$ on $L^{2}(S^{1})$ given by the complex conjugation (which gives a real structure on the triple of KO-dimension $1$). We state now our first main result:
\begin{thm}\label{thm1}Suppose $(\mathcal{A}, H,D,J)$ has KO-dimension $n\in \mathbb{Z}_{8}$. If $l\colon G\rightarrow \mathbb{R}$   satisfies $l(g^{-1})=-l(g)$ for all $g\in G$, then the equivariant spectral triple 
$(C_{c}(G,\mathcal{A}),\widehat{H}, \widehat{D}, \hat{\pi}_{2}\rtimes\hat{\Gamma})$ on $A\rtimes_{\alpha,r}G$ admits a real structure $\widehat{J}$ of KO-dimension $n+1$. 
\end{thm}

In view of Proposition \ref{30}, 
a similar result which
provides a real structure of KO-dimension $n-1$ holds for weights such that $l(g^{-1})=l(g)$ for any $g\in G$. However, if we want also the first order condition, Remark \ref{72bis} and Proposition \ref{72} force $G$ to be finite. As this situation is K-homologically trivial, we will not discuss this case.

\begin{rmk}Applying Theorem \ref{thm1} to the triple in Example \ref{nctorus} with real structure $J_{1}$, one recovers precisely the real structure on the noncommutative $2$-torus as described for instance in \cite[Chapter 12.3]{gracia2013elements}.
	
\end{rmk}

The proof of Theorem \ref{thm1} is constructive and relies on the following auxiliary map. 

\begin{lemma}\label{lemma1}
Let  $j\colon H\otimes \ell^{2}(G)\rightarrow H\otimes \ell^{2}(G)$ be the antilinear map defined by
	\begin{equation}\label{34a}
	j(\xi\otimes \delta_{g}) = u_{g}^{*}J\xi\otimes J_{G}\delta_{g} = u_{g}^{*}J\xi\otimes \delta_{g^{-1}},
	\end{equation}
where 
%$C\colon \mathbb{C}\rightarrow \mathbb{C}$ is the usual complex conjugation and 
$J_{G}$ is given by \eqref{29}. Then: 
	\begin{enumerate}
		\item $j$ is  isometric.	
		\item If $J^{2}=\varepsilon$ then $j^{2}=\varepsilon$. 
		\item $j$ maps $A\rtimes_{\alpha}G$ into its commutant.
		\item If $DJ=\varepsilon^{\prime}JD$, then $(D\otimes 1) j=\varepsilon^{\prime}j (D\otimes 1)$.
		\item If $l$ satisfies $l(g^{-1})=-l(g)$ for any $g\in G$, then  $(1\otimes M_{l})j = - j(1\otimes M_{l})$.
	\end{enumerate}
\end{lemma}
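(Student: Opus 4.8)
The plan is to verify the five properties of $j$ by direct computation on the spanning vectors $\xi\otimes\delta_g$, exploiting the defining formula \eqref{34a} together with the covariance relation $\pi(\alpha_g(a))=u_g\pi(a)u_g^*$, the invariance $[D,u_g]=0$, the unitary invariance $u_gJu_g^*=J$, and the known properties of $J$ and $J_G$. Properties (1) and (2) are essentially bookkeeping: for (1), since $u_g^*$ is unitary, $J$ is an anti-unitary isometry, and $J_G$ is an isometry on $\ell^2(G)$, the tensor-structured map $j$ inherits isometricity. For (2), I would compute $j^2(\xi\otimes\delta_g)$ by applying \eqref{34a} twice; the key is that the second application produces a factor $u_{g^{-1}}^*J$ acting on $u_g^*J\xi$, and using anti-linearity of $J$ together with $u_gJu_g^*=J$ (equivalently $Ju_g^*=u_g^*J$, so $u_{g^{-1}}^*J\,u_g^*J=u_gJu_g^*J=JJ=\varepsilon$, after carefully tracking how $J$ commutes past the unitaries) one recovers $\varepsilon$ on the $H$-factor, while $J_G^2=1$ fixes the $\ell^2(G)$-factor.

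For properties (4) and (5) I would compute the relevant commutators on $\xi\otimes\delta_g$. For (4), since $D$ acts only on the $H$-factor and $[D,u_g]=0$, I can pull $D$ through $u_g^*$ freely and then use $DJ=\varepsilon'JD$ to extract the sign $\varepsilon'$; the $\delta_{g^{-1}}$ in the second tensor slot is untouched, so no further sign arises. For (5), the operator $M_l$ acts as multiplication by $l(g)$ on $\delta_g$, hence by $l(g^{-1})$ on $J_G\delta_g=\delta_{g^{-1}}$; because $l$ is a homomorphism, $l(g^{-1})=-l(g)$, which yields exactly the anticommutation sign $-1$. This is the step where the hypothesis "$l$ is a homomorphism" is genuinely used, so I would make sure to state clearly that it is precisely $l(g^{-1})=-l(g)$ that is required (and nothing more).

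The main obstacle is property (3): showing $j$ maps $A\rtimes_\alpha G$ into its commutant, i.e.\ that the right action $b\mapsto j\,\hat\pi_2\rtimes\hat\Gamma(b)\,j^{-1}$ commutes with the left action for all generators $a\delta_h$. Here I expect the careful work to lie in tracking how conjugating $\hat\pi_2(a)$ and $\hat\Gamma_h$ by the antilinear $j$ interacts with the twists $u_g^*$ and the inversion $g\mapsto g^{-1}$ on the group index. I would compute $j\,\hat\pi_2(a)\,j^{-1}$ and $j\,\hat\Gamma_h\,j^{-1}$ explicitly on $\xi\otimes\delta_g$, obtaining operators that act via $J\pi(a^*)J^{-1}$ (the zeroth-order right action on $H$ coming from the real structure on the base triple) combined with right translation on the group, and then check that these commute with the left generators $\hat\pi_2(a')$ and $\hat\Gamma_{h'}$. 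The zeroth-order condition $[\pi(a),J\pi(b)J^{-1}]=0$ for the base triple handles the $H$-factor, while the fact that left translations $\hat\Gamma$ commute with right translations on $\ell^2(G)$ handles the group factor; the unitary invariance $u_gJu_g^*=J$ is what guarantees the $u$-twists align correctly so that these two mechanisms decouple cleanly.
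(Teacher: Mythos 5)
Your proposal is correct and follows essentially the same route as the paper: direct verification on the spanning vectors $\xi\otimes\delta_{g}$, using $u_{g}Ju_{g}^{*}=J$ and $u_{g^{-1}}=u_{g}^{*}$ for (2), the zeroth order condition for $J$ together with the decoupling of the $H$- and $\ell^{2}(G)$-factors for (3), the $G$-invariance of $D$ for (4), and $l(g^{-1})=-l(g)$ for (5). Your justification of (5) is in fact slightly more precise than the paper's one-line remark, since the anticommutation sign here really does come from $l(g^{-1})=-l(g)$ rather than from the anti-linearity of $J$.
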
  

\begin{proof}Point $(1)$ is clear as both $u$ and $J$ are isometric. To prove point $(2)$ note that
	\begin{displaymath}
	j^{2}(\xi\otimes \delta_{g}) = u_{g^{-1}}^{*}Ju_{g}^{*}J\xi\otimes \delta_{g} = u_{g}u_{g}^{*}J^{2}\xi\otimes \delta_{g} = \varepsilon (\xi\otimes \delta_{g}).
	\end{displaymath} 
	Point $(3)$ comes by a straightforward computation: for any $a,b\in A$ and $g,h\in G$ we have
	\begin{displaymath}
	\begin{split}
	\left[\hat{\pi}_{2}(a)\hat{\Gamma}_{h}, j\hat{\pi}_{2}(b)\hat{\Gamma}_{g}j^{-1}\right](\xi\otimes \delta_{x}) &=	\pi(a)u_{h}u_{gx^{-1}}^{*}J\pi(b)u_{g}J^{-1}u_{x}^{*}\xi\otimes \delta_{hxg^{-1}} \\
    &\qquad \qquad - u_{gx^{-1}h^{-1}}^{*}J\pi(b)u_{g}J^{-1}u_{hx}^{*}\pi(a)u_{h}\xi \otimes \delta_{hxg^{-1}}\\
	&= \left[\pi(a), J\pi(\alpha_{hxg^{-1}}(b))J^{-1}\right]u_{h}\xi \otimes \delta_{hxg^{-1}} \\
	&=0
	\end{split}
	\end{displaymath}
	since $J$ satisfies the zeroth order condition. To prove point $(4)$ note that
	\begin{displaymath}
	(D\otimes 1) j (\xi\otimes \delta_{g})= Du_{g}^{*}J\xi\otimes \delta_{g^{-1}} =  \varepsilon^{\prime} u_{g}^{*}JD \xi\otimes \delta_{g^{-1}}= \varepsilon^{\prime} j(D\otimes 1)(\xi\otimes \delta_{g})
	\end{displaymath}
	by the invariance of $D$. Point $(5)$ is clear. 
\end{proof}

\noindent 
We then claim that the equivariant real structure $\widehat{J}$  for $\widehat{D}$ when $n$ is odd is given by  
\begin{itemize}
	\item $		\widehat{J}= j\otimes cc \quad$  for $n=3,7$
	
	\item $\widehat{J}= j\otimes cc\circ \sigma_{2} \quad  $ for $n=1,5$
\end{itemize}
on the Hilbert space $\widehat{H}=H\otimes \ell^{2}G\otimes \mathbb{C}^{2}$ (here $cc$ denotes the complex conjugation operator). When $n$ is even, the equivariant real structure $\widehat{J}$  for $\widehat{D}$ is instead given by
\begin{itemize}
	\item $\widehat{J}=\rchi J\otimes J_{G} \quad $ for $n=0,4$
	\item $\widehat{J}=J\otimes J_{G} \quad $ for $n=2,6$
\end{itemize}
on the Hilbert space $\widehat{H} = H\otimes \ell^{2}(G)$, where $J_{G}$ is the flip morphism defined in \eqref{29}.

\begin{proof}[Proof of Theorem \ref{thm1}]Suppose as a first case that the triple $(\mathcal{A}, H, D)$ is odd.  The zeroth-order condition directly comes from the zeroth order condition in Lemma \ref{lemma2} and the peculiar diagonal/anti-diagonal form of $\widehat{J}$. Let us now discuss the triple of signs $(\varepsilon,\varepsilon^{\prime},\varepsilon^{\prime} )$: using the hypothesis on $l$, $j$ anti-commutes with $M_{l}$ on $\ell^{2}(G)$ and so the real structure $\widehat{J}$ has the same $\varepsilon$  sign as $J$ when $n=3,7$ and the opposite when $n=1,5$. Analogously, the $\varepsilon^{\prime}$ sign remains the same for $n= 3,7$ and changes for $n=1,5$ (that is, is always $+1$). Further $\widehat{J}$ is always even with respect to the grading $\rchi=\sigma_{3}$ when $n=3,7$ and odd when $n=1,5$. We have therefore checked the theorem for all the possible cases of $(\mathcal{A}, H, D)$ odd. 
	
	Suppose now that $(\mathcal{A}, H, D,J)$ is an even  real triple with respect to the grading
	$\rchi= \sigma_{3}$, $H= H_{0}\oplus H_{1}$ and 
	\begin{displaymath}
	D=\left(\begin{matrix}
	0 & D_{0} \\
	D_{0}^{*} & 0
	\end{matrix}\right).
	\end{displaymath}  
	Recall that $\widehat{D}$ is given by  \eqref{dual1}. By the compatibility conditions of $J$ with the grading $\rchi$ we deduce that $J$ must be of the following form:
	\begin{itemize}
		\item $J=\left(\begin{matrix}
		j_{1} & 0 \\
		0 & j_{2}
		\end{matrix}\right)$ for $n=0,4$,
		\item $J= \left(\begin{matrix}
		0 & j_{1} \\
		j_{2} & 0
		\end{matrix}\right)$ for $n=2,6$.
	\end{itemize}
	So now we have just to check case by case: if $n=0,4$ then by assumption
	\begin{displaymath}
	\left(\begin{matrix}
	0 & D_{0} \\
	D^{*}_{0} & 0
	\end{matrix}\right)\left(\begin{matrix}
	j_{1} & 0 \\
	0 & j_{2}
	\end{matrix}\right) =\left(\begin{matrix}
	j_{1} & 0 \\
	0 & j_{2}
	\end{matrix}\right)\left(\begin{matrix}
	0 & D_{0} \\
	D^{*}_{0} & 0
	\end{matrix}\right).
	\end{displaymath}
	Using this equation it is easy to check that 
	\begin{displaymath}
	\left(\begin{matrix}
	1\otimes M_{l} & D_{0}\otimes 1 \\
	D^{*}_{0}\otimes 1 & -1\otimes M_{l}
	\end{matrix}\right)\left(\begin{matrix}
	j_{1}\otimes J_{G} & 0 \\
	0 & -j_{2}\otimes J_{G}
	\end{matrix}\right) = - \left(\begin{matrix}
	j_{1}\otimes J_{G} & 0 \\
	0 & -j_{2}\otimes J_{G}
	\end{matrix}\right)\left(\begin{matrix}
	1\otimes M_{l} & D_{0}\otimes 1 \\
	D^{*}_{0}\otimes 1 & -1\otimes M_{l}
	\end{matrix}\right)
	\end{displaymath}
	so that the two triples have different  $\varepsilon^{\prime}$ signs. The $\varepsilon$ sign remains the same and this proves that we get dimensions $1$ and $5$ respectively. In an analogous way one shows that the dimensions $2$ and $6$ go  to the dimensions $3$ and $7$ respectively. 
\end{proof}

\begin{rmk}\label{85}
The assumption that $[D,u_{g}]=0$ for any $g\in G$ is essentially necessary in order to prove Theorem \ref{thm1}. Indeed, suppose there is a real structure $J$ on $(\mathcal{A},H,D)$, so that  $DJ=\varepsilon^{\prime}JD$ by definition. An essential step to prove Theorem \ref{thm1} is Lemma \ref{lemma1}$(4)$, namely the fact that 
		\begin{equation}\label{84}
	Du_{g}J=\varepsilon^{\prime}u_{g}JD, \qquad \forall g\in G.
	\end{equation}
These two conditions together imply that $D$ must be $G$-invariant: indeed, using \eqref{84} we see that $Du_{g}=\varepsilon^{\prime}u_{g}JDJ^{-1}$ and so
$$
		[D,u_{g}] = \varepsilon^{\prime}u_{g}JDJ^{-1} - u_{g}DJJ^{-1} = u_{g}\left(\varepsilon^{\prime}JD -DJ\right)J^{-1} =0.
$$
Note that this computation is independent of the fact that $J$ is unitarily equivalent, which is instead an assumption needed to show that $\widehat{J}$ satisfies the zeroth order condition. \demo
\end{rmk}

The following results show that the crossed product spectral triple construction is compatible with the first and second order conditions. 

\begin{prop}Let $G$ be a discrete group endowed with a proper group homomorphism $l\colon G\rightarrow \mathbb{R}$ and let $(\mathcal{A},H,D,u)$ be a $G$-invariant (even or odd) spectral triple on a unital $C^{*}$-algebra $A$ endowed with a unitarily invariant real structure $J$. 
If $(\mathcal{A},H,D,J)$ satisfies the first order condition, then the spectral triple 
$(C_{c}(G,\mathcal{A}), \widehat{H}, \widehat{D}, \widehat{J})$ on $A\rtimes_{\alpha,r}G$ also satisfies the first order condition.
\end{prop}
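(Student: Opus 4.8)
The plan is to reduce the first order condition on $A\rtimes_{\alpha,r}G$ to the first order condition on the base triple $(\mathcal{A},H,D,J)$ together with the zeroth order condition for $\widehat{J}$ already obtained in the proof of Theorem~\ref{thm1}. Since $C_{c}(G,\mathcal{A})$ is generated by the elements $a\delta_{g}$, which act as $\hat{\pi}_{2}(a)\hat{\Gamma}_{g}$, it suffices to verify
\[
\left[\left[\widehat{D},\,\hat{\pi}_{2}(a)\hat{\Gamma}_{g}\right],\,\widehat{J}\,\hat{\pi}_{2}(b)\hat{\Gamma}_{h}\,\widehat{J}^{-1}\right]=0
\]
for all $a,b\in A$ and $g,h\in G$. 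First I would split the inner commutator according to the decomposition $\widehat{D}=D\otimes 1\otimes\sigma_{1}+1\otimes M_{l}\otimes\sigma_{2}$ of \eqref{8} in the odd case (respectively the block form \eqref{dual1} in the even case). The $G$-invariance $[D,u_{g}]=0$ gives
\[
\left[D\otimes 1,\,\hat{\pi}_{2}(a)\hat{\Gamma}_{g}\right]=\bigl([D,\pi(a)]\otimes 1\bigr)\hat{\Gamma}_{g},
\]
while evaluating on $\xi\otimes\delta_{x}$ yields $\left[1\otimes M_{l},\,\hat{\pi}_{2}(a)\hat{\Gamma}_{g}\right](\xi\otimes\delta_{x})=(l(gx)-l(x))\,\hat{\pi}_{2}(a)\hat{\Gamma}_{g}(\xi\otimes\delta_{x})$.

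The crucial point is the homomorphism hypothesis on $l$: it makes $l(gx)-l(x)=l(g)$ constant, so that $[1\otimes M_{l},\hat{\pi}_{2}(a)\hat{\Gamma}_{g}]=l(g)\,\hat{\pi}_{2}(a)\hat{\Gamma}_{g}$ is merely a scalar multiple of the representation. Hence the entire $M_{l}$-contribution to the outer commutator equals $l(g)\left[\hat{\pi}_{2}(a)\hat{\Gamma}_{g},\,\widehat{J}\hat{\pi}_{2}(b)\hat{\Gamma}_{h}\widehat{J}^{-1}\right]$, and this vanishes by the zeroth order condition for $\widehat{J}$ (Lemma~\ref{lemma1}$(3)$ and its even counterpart). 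In other words the $M_{l}$-part of the first order condition collapses onto the already established zeroth order condition, and this is exactly where the assumption that $l$ is a homomorphism enters.

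It then remains to treat the $D$-summand, i.e.\ to show $\left[\bigl([D,\pi(a)]\otimes 1\bigr)\hat{\Gamma}_{g},\,\widehat{J}\hat{\pi}_{2}(b)\hat{\Gamma}_{h}\widehat{J}^{-1}\right]=0$. Here the key observation is that $([D,\pi(a)]\otimes 1)\hat{\Gamma}_{g}$ has exactly the same shape $(\,\cdot\otimes 1)\hat{\Gamma}_{g}$ as the representation $\hat{\pi}_{2}(a)\hat{\Gamma}_{g}=(\pi(a)\otimes 1)\hat{\Gamma}_{g}$, only with $[D,\pi(a)]$ in place of $\pi(a)$. Since $\widehat{J}$ is built from the twisted map $j$ of Lemma~\ref{lemma1} (tensored with complex conjugation on $\mathbb{C}^{2}$, which acts trivially upon conjugating $\hat{\pi}_{2}(b)\hat{\Gamma}_{h}\otimes 1_{2}$), the computation in the proof of Lemma~\ref{lemma1}$(3)$ applies verbatim with $[D,\pi(a)]$ replacing $\pi(a)$. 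Using the covariance \eqref{60}, the unitary invariance \eqref{uJu*}, and $u_{g}J=Ju_{g}$, one finds that $j\,\hat{\pi}_{2}(b)\hat{\Gamma}_{h}\,j^{-1}$ acts on $\xi\otimes\delta_{x}$ as $J\pi(\alpha_{xh^{-1}}(b))J^{-1}\xi\otimes\delta_{xh^{-1}}$, and carrying out the outer commutator on $\xi\otimes\delta_{x}$ produces exactly
\[
\left[[D,\pi(a)],\,J\pi(\alpha_{gxh^{-1}}(b))J^{-1}\right]u_{g}\xi\otimes\delta_{gxh^{-1}},
\]
which vanishes by the first order condition for $(\mathcal{A},H,D,J)$ applied to $a$ and $\alpha_{gxh^{-1}}(b)\in A$.

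Finally I would record that the even base case, where $\widehat{D}$ has the block form \eqref{dual1} and $\widehat{J}$ carries the extra grading factor $\rchi$, is handled by the same two-step argument, with the off-diagonal entries $D_{0},D_{0}^{*}$ playing the role of $D\otimes 1$ and the diagonal $\pm 1\otimes M_{l}$ playing the role of $1\otimes M_{l}$. I expect the main obstacle to be purely bookkeeping: correctly evaluating the conjugation $\widehat{J}\hat{\pi}_{2}(b)\hat{\Gamma}_{h}\widehat{J}^{-1}$ through the twist $u_{g}^{*}$ and the flip $J_{G}$ so that the arguments of $\alpha$ in the two terms of the outer commutator match (as $gxh^{-1}$ above), and keeping the Pauli matrices and the grading $\rchi$ consistent so that no stray sign or off-diagonal term spoils the reduction. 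Once the conjugation has been computed, the vanishing is immediate from the zeroth and first order conditions on the base triple.
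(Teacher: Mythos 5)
Your proof is correct and follows essentially the same route as the paper: the same splitting of $\widehat{D}$ into the $D\otimes 1$ and $1\otimes M_{l}$ summands, with the first reduced to the first order condition for $J$ via the $G$-invariance of $D$ and the unitary invariance of $J$, and the second reduced to the zeroth order condition via the homomorphism property of $l$. Your observation that $[1\otimes M_{l},\hat{\pi}_{2}(a)\hat{\Gamma}_{g}]=l(g)\,\hat{\pi}_{2}(a)\hat{\Gamma}_{g}$ is merely a slightly cleaner packaging of the identity $l(gxh^{-1})-l(xh^{-1})=l(gx)-l(x)$ that the paper invokes, but the substance is identical.
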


\begin{proof}We prove the first order condition for $(\mathcal{A},H,D,u,J)$ odd; the even case is similar. For any $a,b\in A$ and $g,h\in G$, the desired commutator
	\begin{displaymath}
	\left[\left[D\otimes 1 \pm i\otimes M_{l}, \hat{\pi}_{2}\rtimes\hat{\Gamma}(a\delta_{g})\right], j\hat{\pi}_{2}\rtimes \hat{\Gamma} (b\delta_{h})j^{-1}\right] 
	\end{displaymath}
	is equal to the sum of the following two pieces:
	\begin{displaymath}
	C_{1} =  \left[\left[D\otimes 1 ,\hat{\pi}_{2}(a)\hat{\Gamma}_{g}\right],j\hat{\pi}_{2}(b)\hat{\Gamma}_{h}j^{-1}\right] 
	\end{displaymath} 
	\begin{displaymath}
	C_{2}= \pm i\left[\left[ 1\otimes M_{l},\hat{\pi}_{2}(a)\hat{\Gamma}_{g}\right],j\hat{\pi}_{2}(b)\hat{\Gamma}_{h}j^{-1}\right] .
	\end{displaymath}
	We will separately prove that they are vanishing. On the one hand, using the invariance of $D$ with respect to the action of $G$, we have that 
	\begin{displaymath}
	\begin{split}
	C_{1}(\xi\otimes \delta_{x}) &= \left[D\otimes 1, \hat{\pi}_{2}(a)\hat{\Gamma}_{g}\right]\left(u_{hx^{-1}}^{*}J\pi(b)u_{h}J^{-1}u_{x}^{*}\xi\otimes \delta_{xh^{-1}}\right) \\
	& \qquad \qquad -j\hat{\pi}_{2}(b)\hat{\Gamma}_{h}j^{-1}\left(D\pi(a)u_{g}\xi\otimes \delta_{gx} - \pi(a)u_{g}D\xi\otimes \delta_{gx}\right)\\
	&= [D,\pi(a)]u_{gxh^{-1}}J\pi(b)u_{h}J^{-1}u_{x}^{*}u_{g}^{*}u_{g}\xi\otimes \delta_{gxh^{-1}} \\
	& \qquad \qquad - u_{gxh^{-1}}J\pi(b)u_{h}J^{-1}u_{gx}^{*}[D,\pi(a)]u_{g}\xi\otimes \delta_{gxh^{-1}} \\
	&= \left[\left[D,\pi(a)\right],Ju_{gxh^{-1}}\pi(b)u_{gxh^{-1}}^{*}J^{-1}\right]u_{g}\xi\otimes \delta_{gxh^{-1}} \\
	&= 0
	\end{split}
	\end{displaymath}
	by the first order condition for $J$. On the other hand
	\begin{displaymath}
	\begin{split}
	\pm iC_{2}(\xi\otimes \delta_{x}) 	&= \left[1\otimes M_{l},\hat{\pi}_{2}(a)\hat{\Gamma}_{g}\right]\left(u_{hx^{-1}}^{*}J\pi(b)u_{h}J^{-1}u_{x}^{*}\xi\otimes \delta_{xh^{-1}}\right) \\
	& \qquad \qquad - j\hat{\pi}_{2}(b)\hat{\Gamma}_{h}j^{-1}\left(\pi(a)u_{g}\xi\otimes l(gx)\delta_{gx} - \pi(a)u_{g}\xi \otimes l(x)\delta_{gx}\right)	\\
	& = \left(l(gxh^{-1}) - l(xh^{-1})\right)\pi(a)u_{gxh^{-1}}J\pi(b)u_{gxh^{-1}}^{*}J^{-1}u_{g}\xi\otimes \delta_{gxh^{-1}} \\
	& \qquad \qquad - \left(l(gx) - l(x)\right)u_{gxh^{-1}}J\pi(b)u_{gxh^{-1}}^{*}J^{-1}\pi(a)u_{g}\xi\otimes \delta_{gxh^{-1}}.
	\end{split}
	\end{displaymath}
	As $l\colon G\rightarrow \mathbb{R}$ is a homomorphism, we have that $	l(gxh^{-1})- l(xh^{-1}) = l(gx) - l(x)$. Then
	\begin{displaymath}
	\pm iC_{2}(\xi\otimes \delta_{x}) = \left(l(gx) - l(x)\right)\left[\pi(a),J\pi(\alpha_{gxh^{-1}}(b))J^{-1}\right]u_{g}\xi\otimes \delta_{gxh^{-1}}
	\end{displaymath}
	and this is zeroth as $J$ implements the zeroth order condition. 
\end{proof}

\begin{prop}Let $G$ be a discrete group endowed with a proper group homomorphism $l\colon G\rightarrow \mathbb{R}$ and let $(\mathcal{A},H,D,u)$ be a $G$-invariant (even or odd) spectral triple on a unital $C^{*}$-algebra $A$ endowed with a unitarily invariant real structure $J$ which satisfies the first order condition. If $(\mathcal{A},H,D,J)$ satisfies the second order condition, then the spectral triple 
$(C_{c}(G,\mathcal{A}), \widehat{H}, \widehat{D}, \widehat{J})$ on $A\rtimes_{\alpha,r}G$ also satisfies the second order condition. 
\end{prop}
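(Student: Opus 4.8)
The plan is to follow the pattern of the preceding proposition (preservation of the first order condition) and of the second order computation for the group algebra (the Proposition following Proposition~\ref{72}), carrying out the odd case in full and leaving the even case to the same block bookkeeping as in the proof of Theorem~\ref{thm1}. Writing $\widehat{D}=D\otimes1\otimes\sigma_1+1\otimes M_l\otimes\sigma_2$, the first thing I would record is that, because $l$ is a homomorphism, the $M_l$-part of a commutator collapses to a scalar: $[1\otimes M_l,\hat{\pi}_2(a)\hat{\Gamma}_g]=l(g)\,\hat{\pi}_2(a)\hat{\Gamma}_g$. Thus $[\widehat{D},\hat{\pi}_2(a)\hat{\Gamma}_g]$ splits as $S_a\otimes\sigma_1+l(g)\,\hat{\pi}_2(a)\hat{\Gamma}_g\otimes\sigma_2$, where $S_a=[D\otimes1,\hat{\pi}_2(a)\hat{\Gamma}_g]$ acts as $\xi\otimes\delta_x\mapsto[D,\pi(a)]u_g\xi\otimes\delta_{gx}$ by the $G$-invariance of $D$. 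The essential structural point is that the $M_l$-summand is a zeroth-order object (a multiple of the representation), not a genuine Dirac commutator.

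Next I would expand the double commutator bilinearly into four pieces according to which summand, $\sigma_1$ or $\sigma_2$, is taken in the outer and inner commutator, and dispatch the two diagonal pieces. The $\sigma_1$--$\sigma_1$ piece is $\bigl[[D\otimes1,\hat{\pi}_2(a)\hat{\Gamma}_g],\,j[D\otimes1,\hat{\pi}_2(b)\hat{\Gamma}_h]j^{-1}\bigr]$ tensored with $\sigma_1^2=1$; evaluating on $\xi\otimes\delta_x$ and using the invariance of $D$, the unitary invariance of $J$ and the covariance relations exactly as in Lemma~\ref{lemma1}, it reduces fibrewise to $\bigl[[D,\pi(a)],\,J[D,\pi(\alpha_{gxh^{-1}}(b))]J^{-1}\bigr]$, which vanishes by the \emph{base} second order condition. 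The $\sigma_2$--$\sigma_2$ piece is, by $\sigma_2^2=1$ and the homomorphism property, equal to $-l(g)l(h)$ times $[\hat{\pi}_2(a)\hat{\Gamma}_g,\,j\hat{\pi}_2(b)\hat{\Gamma}_h j^{-1}]$, which vanishes by the zeroth order condition of Lemma~\ref{lemma1}(3).

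The hard part will be the two cross pieces, in which a Dirac commutator is paired with the scalar multiple of the representation. Since $\sigma_1$ and $\sigma_2$ anticommute, each such term is proportional to $\sigma_1\sigma_2=i\sigma_3$ and, crucially, carries an \emph{anticommutator} of the fibre operators rather than a commutator: writing $T_b=j\hat{\pi}_2(b)\hat{\Gamma}_h j^{-1}$, $R_a=\hat{\pi}_2(a)\hat{\Gamma}_g$ and $S_b'=j[D\otimes1,\hat{\pi}_2(b)\hat{\Gamma}_h]j^{-1}$, the cross contribution is proportional to $\bigl(l(h)\{S_a,T_b\}+l(g)\{R_a,S_b'\}\bigr)\otimes\sigma_3$. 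Here the first order condition proved in the previous proposition gives $[S_a,T_b]=0$, while its symmetric companion $[R_a,S_b']=0$ follows from the first and zeroth order conditions via the Jacobi identity; hence the two anticommutators reduce to $2S_aT_b$ and $2R_aS_b'$. The crux --- and the step I expect to be genuinely delicate --- is then to show that $l(h)S_aT_b+l(g)R_aS_b'$ vanishes. This is the point at which the homomorphism property of $l$ must combine with the first and second order conditions, in direct analogy with the functional-equation cancellation $l(hxg^{-1})-l(xg^{-1})=l(hx)-l(x)$ that drove the group-algebra case; establishing it is the main obstacle (the case $n=1,5$ differs only by the signs introduced through the extra $\sigma_2$-twist in $\widehat{J}$, which do not affect the vanishing mechanism), after which the even case follows verbatim with the grading absorbed into the off-diagonal form of $J$ as in Theorem~\ref{thm1}.
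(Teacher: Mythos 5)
Your proposal is not complete: the step you yourself isolate as ``the crux'' --- the vanishing of $l(h)S_aT_b+l(g)R_aS_b'$ --- is never established, and a proof that defers its main obstacle is a sketch, not a proof. Worse, the quantity you need to kill is generically nonzero, so no combination of the zeroth/first/second order conditions for $J$ with the homomorphism property of $l$ can rescue this route. Take $b=1_A$ and any $h$ with $l(h)\neq 0$: then $S_b'=j[D\otimes 1,\hat{\Gamma}_h]j^{-1}=0$ by the $G$-invariance of $D$, while $T_b=j\hat{\Gamma}_hj^{-1}$ is the unitary $\xi\otimes\delta_x\mapsto\xi\otimes\delta_{xh^{-1}}$, so
\begin{displaymath}
\bigl(l(h)S_aT_b+l(g)R_aS_b'\bigr)(\xi\otimes\delta_x)=l(h)\,[D,\pi(a)]u_g\xi\otimes\delta_{gxh^{-1}},
\end{displaymath}
which is nonzero whenever $[D,\pi(a)]\neq0$. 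Your identification of the cross terms as anticommutators $i\{X,Y\}\otimes\sigma_3$ (coming from $[X\otimes\sigma_1,Y\otimes\sigma_2]$) is algebraically correct, and that is precisely why the expansion you set up cannot close.

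The paper organises the computation differently and never meets these anticommutators. It does not expand the full matrix operator $D\otimes1\otimes\sigma_1+1\otimes M_l\otimes\sigma_2$; instead it works separately with the two off-diagonal entries $D\otimes1\pm i\otimes M_l$ of $\widehat{D}$, so that the $M_l$-contribution enters only as the scalar $\pm i\,l(\cdot)$ multiplying the representation, and the whole statement is reduced to the vanishing of four genuine \emph{commutators} $C_1,\dots,C_4$: $C_1$ dies by the second order condition for $J$, $C_4$ by the zeroth order condition, and the two mixed terms $C_2,C_3$ by the first order condition, all after the simplifications $j[D\otimes1,\hat{\pi}_2(b)\hat{\Gamma}_g]j^{-1}=J[D,\pi(\alpha_{xg^{-1}}(b))]J^{-1}\otimes(\text{shift})$ and its $M_l$-analogue, which you should reproduce. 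The passage from these four scalar identities back to the matrix statement is then attributed to the diagonal/anti-diagonal form of $\widehat{J}$. You should be aware that your honest Pauli-matrix expansion and the paper's entrywise bookkeeping do not visibly agree at that last step --- your calculation shows that the entrywise products pair $A_-$ with $jB'_+j^{-1}$ and $jB'_-j^{-1}$ with $A_+$, producing exactly the anticommutators above --- so if you want to salvage your approach you must either locate the cancellation the paper's reduction implicitly relies on or conclude that the reduction needs further justification; as it stands, your write-up proves neither.
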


\begin{proof}	
	Let us  focus on $(\mathcal{A},H,D,u,J)$ odd as the even case is similar. With a slight abuse of notation, let us denote $\widehat{D}=D\otimes 1 \pm i\otimes M_{l}$. To prove the required commutation relation 
	\begin{displaymath}
	[\widehat{D},\hat{\pi}_{2}(a)\hat{\Gamma}_{h}]j[\widehat{D},\hat{\pi}_{2}(b)\hat{\Gamma}_{g}]j^{-1} = j[\widehat{D},\hat{\pi}_{2}(b)\hat{\Gamma}_{g}]j^{-1}[\widehat{D},\hat{\pi}_{2}(a)\hat{\Gamma}_{h}]
	\end{displaymath}
	we will show that the following four commutators are vanishing:
	\begin{displaymath}
	\begin{split}
	C_{1} &= \left[[D\otimes 1,\hat{\pi}_{2}(a)\hat{\Gamma}_{h}], j[D\otimes 1,\hat{\pi}_{2}(b)\hat{\Gamma}_{g}]j^{-1}\right]\\
		C_{2} &= \left[[D\otimes 1,\hat{\pi}_{2}(a)\hat{\Gamma}_{h}], j[\pm i\otimes M_{l},\hat{\pi}_{2}(b)\hat{\Gamma}_{g}]j^{-1}\right]\\
			C_{3} &= \left[[\pm i\otimes M_{l},\hat{\pi}_{2}(a)\hat{\Gamma}_{h}], j[D\otimes 1,\hat{\pi}_{2}(b)\hat{\Gamma}_{g}]j^{-1}\right]\\
				C_{4} &= \left[[\pm i\otimes M_{l},\hat{\pi}_{2}(a)\hat{\Gamma}_{h}], j[\pm i\otimes M_{l},\hat{\pi}_{2}(b)\hat{\Gamma}_{g}]j^{-1}\right]\\
	\end{split}
	\end{displaymath}
	for any $a,b\in A$ and $g,h\in G$. 
	First, note that:
	\begin{displaymath}
	\begin{split}
	j[D\otimes 1,\hat{\pi}_{2}(b)\hat{\Gamma}_{g}]j^{-1}(\xi\otimes\delta_x)&=u_{gx^{-1}}^{*}J[D,\pi(b)]u_gJ^{-1}u_x^{*}\xi\otimes\delta_{xg^{-1}}\nonumber\\
	&=J[D,\pi(\alpha_{xg^{-1}}(b))]J^{-1}\xi\otimes\delta_{xg^{-1}}
	\end{split}
	\end{displaymath}
 and that
		\begin{displaymath}
	\begin{split}
	j[\pm i\otimes M_{l},\hat{\pi}_{2}(b)\hat{\Gamma}_{g}]j^{-1}(\xi\otimes\delta_x)&=\mp iu_{gx^{-1}}^{*}J\pi(b)u_gJ^{-1}u_x^{*}\xi\otimes l(g)\delta_{xg^{-1}}\nonumber\\
	&=\mp iJ\pi(\alpha_{xg^{-1}}(b))J^{-1}\xi\otimes l(g)\delta_{xg^{-1}}
	\end{split}
	\end{displaymath}
	as $l$ is a homomorphism. It is then easy to see that 
	\begin{displaymath}
	C_{1} = \left[[D,\pi(a)], J[D,\pi(\alpha_{hxg^{-1}}(b))]J^{-1}\right]u_h\xi\otimes\delta_{hxg^{-1}}
	\end{displaymath}
	which vanishes since $J$ implements the second order condition. Next, since $l(h)l(g)=l(g)l(h)$ for any $g,h\in G$, we have that
	\begin{displaymath}
	C_{4}= \left[\pi(a), J\pi(\alpha_{hxg^{-1}}(b))J^{-1} \right]u_h\xi\otimes l(g)l(h)\delta_{hxg^{-1}}
	\end{displaymath}
	vanishes by the zeroth order condition for $J$. 
	Furthermore, the two mixed terms
	\begin{displaymath}
\begin{split}
	C_{2} &= i\left[[D,\pi(a)], J\pi(\alpha_{hxg^{-1}}(b))J^{-1}\right]u_h\xi\otimes l(g)\delta_{hxg^{-1}}\\
C_{3} &= \pm i\left[\pi(a), J[D,\pi(\alpha_{hxg^{-1}}(b))]J^{-1}\right] u_h\xi\otimes l(h)\delta_{hxg^{-1}}
\end{split}
	\end{displaymath}
	vanish for the first order condition for $J$. 
The peculiar diagonal/anti-diagonal form of $\widehat{J}$ then brings the thesis. 	

\end{proof}

\subsection{Second Case ($J$ twisted invariant)}\label{35b}\indent

Let $G$ be discrete group endowed with a proper Dirac weight $l\colon G\rightarrow \mathbb{R}$ and $(\mathcal{A},H, D,u)$ a $G$-invariant (even or odd) spectral triple on a unital $C^{*}$-algebra $A$ endowed with a real structure $J$ which is \emph{twisted invariant}, namely such that
\begin{equation}\label{uJu}
	u_{g}J u_{g} = J
\end{equation}
for every $g\in G$.  This is for example the case when the triple discussed in Example \ref{nctorus} is endowed with the antilinear operator $J_{2}$ on $L^{2}(S^{1})$ given by $J_{2}f(t) = \overline{f(-t)}$ (which gives a real structure on the triple of KO-dimension $7$). We will prove the following fact.

\begin{thm} \label{thm2}Suppose $(\mathcal{A}, H,D,J)$ has KO-dimension $n\in \mathbb{Z}_{8}$. If $G$ is abelian, the  $\widehat{G}$-invariant spectral triple 
$(C_{c}(G,\mathcal{A}),\widehat{H}, \widehat{D}, \hat{\pi}_{2}\rtimes\hat{\Gamma})$ on $A\rtimes_{\alpha,r}G$ admits a  real structure $\widetilde{J}$ of KO-dimension $n-1$. 
\end{thm}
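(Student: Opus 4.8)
The plan is to mirror the proof of Theorem~\ref{thm1}, replacing the auxiliary map $j$ of Lemma~\ref{lemma1} by an anti-linear map $\tilde{j}\colon H\otimes\ell^{2}(G)\to H\otimes\ell^{2}(G)$ adapted to the twisted relation \eqref{uJu}, namely
\[ \tilde{j}(\xi\otimes\delta_{g}) = u_{g}J\xi\otimes\delta_{g}, \qquad \xi\in H,\ g\in G. \]
In contrast with \eqref{34a}, $\tilde{j}$ is diagonal in the group variable: it does not apply the flip $J_{G}$. Heuristically, since $G$ is abelian, replacing the unitary invariance \eqref{uJu*} by \eqref{uJu} is Fourier-dual to exchanging the flip $J_{G}$ on $\ell^{2}(G)$ with the plain complex conjugation that fixes every $\delta_{g}$, and it is exactly this exchange that converts the $+1$ shift of the KO-dimension in Theorem~\ref{thm1} into a $-1$ shift.

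The first task is to establish the analogue of Lemma~\ref{lemma1} for $\tilde{j}$. That $\tilde{j}$ is isometric is clear, and using $Ju_{g}=u_{g}^{*}J$ (which is \eqref{uJu} rewritten) one finds $\tilde{j}^{2}(\xi\otimes\delta_{g}) = u_{g}Ju_{g}J\xi\otimes\delta_{g} = \varepsilon\,(\xi\otimes\delta_{g})$; the $G$-invariance of $D$ together with $DJ=\varepsilon^{\prime}JD$ yields $(D\otimes 1)\tilde{j} = \varepsilon^{\prime}\tilde{j}(D\otimes 1)$ just as before. The decisive difference is the interaction with $M_{l}$: because $\tilde{j}$ preserves each $\delta_{g}$ and $M_{l}$ is a multiplication (hence diagonal) operator, one has $(1\otimes M_{l})\tilde{j} = +\tilde{j}(1\otimes M_{l})$, so that $\tilde{j}$ \emph{commutes} with $1\otimes M_{l}$, whereas $j$ anticommuted with it. I emphasise that this step no longer needs $l$ to be a homomorphism, in agreement with the hypotheses of the theorem (only a proper Dirac weight and abelianness of $G$ are assumed).

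The main obstacle is the zeroth order condition, i.e.\ showing that $\tilde{j}$ carries $\hat{\pi}_{2}\rtimes\hat{\Gamma}(A\rtimes_{\alpha,r}G)$ into its commutant; this is the only point where abelianness of $G$ is genuinely used. The plan is to compute $\tilde{j}\,\hat{\pi}_{2}(b)\hat{\Gamma}_{g}\,\tilde{j}^{-1}$ directly and, using $Ju_{g}J^{-1}=u_{g}^{*}$ together with the covariance of $(\pi,u)$, simplify it to the operator
\[ \xi\otimes\delta_{x}\ \longmapsto\ u_{gx}\,\big(J\pi(b)J^{-1}\big)\,u_{gx}^{*}\,\xi\otimes\delta_{gx}. \]
Commuting this with $\hat{\pi}_{2}(a)\hat{\Gamma}_{h}$ and pushing all the $u$'s to the left produces two summands, both proportional to $u_{ghx}\,\big(J\pi(b)J^{-1}\big)\,\pi(\alpha_{(ghx)^{-1}}(a))\,u_{gx}^{*}$: here the identities $u_{h}u_{gx}=u_{ghx}$ and $u_{ghx}^{*}u_{h}=u_{gx}^{*}$, valid precisely because $G$ is abelian, combined with the zeroth order condition $[\pi(a),J\pi(b)J^{-1}]=0$ for $J$, force the two summands to coincide and the commutator to vanish.

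Finally, I would assemble $\widetilde{J}$ on $\widehat{H}=H\otimes\ell^{2}(G)\otimes\mathbb{C}^{2}$ out of $\tilde{j}$ and an anti-linear factor on $\mathbb{C}^{2}$, and read off the KO-dimension from the signs. Since $\tilde{j}$ now commutes with $M_{l}$ while $\widehat{D}=D\otimes 1\otimes\sigma_{1}+1\otimes M_{l}\otimes\sigma_{2}$ contains the term $1\otimes M_{l}\otimes\sigma_{2}$, the Clifford factor must differ from the one in Theorem~\ref{thm1}: for $\varepsilon^{\prime}=+1$ (that is $n=3,7$) I would take $\widetilde{J}=\tilde{j}\otimes cc\circ\sigma_{1}$, and for $\varepsilon^{\prime}=-1$ (that is $n=1,5$) I would take $\widetilde{J}=\tilde{j}\otimes cc\circ\sigma_{3}$. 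A routine check of the (anti)commutation of these operators with $\widehat{D}$ and with the grading $\rchi=\sigma_{3}$ gives the sign triples $(\varepsilon,+1,-1)$ for $n=3,7$ and $(\varepsilon,+1,+1)$ for $n=1,5$, i.e.\ precisely KO-dimension $n-1$. The case of an even starting triple $(\mathcal{A},H,D,J)$ is treated exactly as the even case of Theorem~\ref{thm1}, using the block form of $J$ relative to the grading and $\widehat{D}$ in the form \eqref{dual1}; because $\tilde{j}$ commutes with $1\otimes M_{l}$ (where $j$ anticommuted), the resulting $\varepsilon^{\prime}$-relation with \eqref{dual1} is reversed, again shifting the KO-dimension to $n-1$ rather than $n+1$.
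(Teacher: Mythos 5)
Your proposal is correct and follows essentially the same route as the paper: your map $\tilde{j}(\xi\otimes\delta_{g})=u_{g}J\xi\otimes\delta_{g}$ is exactly the auxiliary map of the paper's Lemma~\ref{lemma2}, you establish the same five properties (including that abelianness enters only in the zeroth order condition and that $\tilde{j}$ commutes with $1\otimes M_{l}$, hence anticommutes with $\pm i\otimes M_{l}$ by antilinearity), and you assemble $\widetilde{J}$ with the same Clifford factors $cc\circ\sigma_{1}$ for $n=3,7$ and $cc\circ\sigma_{3}$ for $n=1,5$, with the same sign bookkeeping and the same block-matrix treatment of the even case. No substantive differences from the paper's argument.
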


%\begin{rmk}
%	\nrt{
%		The assumption that $G$ is abelian might seem unnecessarily restrictive (and thus of low interest)
%		compared to the discussion in Subsection \ref{J1}, where $G$ need not be commutative. However, this rigidity is just apparent. Indeed, when we focus our attention to spectral triples which are non-degenerate  (a condition that is necessary for example to have a compact quantum metric space structure) we discover that the hypothesis  that $l\colon G\rightarrow \mathbb{R}$ is a homomorphism as in Theorem \ref{thm1} forces the commutativity of $G$ (cf. Example \ref{exa}). 
%	}
%\end{rmk}

The proof of Theorem \ref{thm2} is constructive and relies on the properties of the following auxiliary map. 

\begin{lemma}\label{lemma2}
Let $G$ be an abelian discrete group and 
$j\colon H\otimes \ell^{2}(G)\rightarrow H\otimes \ell^{2}(G)$ an antilinear map defined by
	\begin{equation}\label{34b}
	j(\xi\otimes \delta_{g}) = u_{g}J\xi\otimes cc \,\delta_{g}.
	\end{equation}
%	where 
% $C\colon \mathbb{C}\rightarrow \mathbb{C}$ is the usual complex conjugation. 
Then: 
	\begin{enumerate}
		\item $j$ is isometric
		\item If $J^{2}=\varepsilon$ then $j^{2}=\varepsilon$. 
		\item $j$ maps $A\rtimes_{\alpha}G$ into its commutant.
		\item If $DJ=\varepsilon^{\prime}JD$, then $(D\otimes 1) j=\varepsilon^{\prime}j (D\otimes 1)$
		\item $(\pm i\otimes M_{l})j = - j(\pm i\otimes M_{l})$
	\end{enumerate}
\end{lemma}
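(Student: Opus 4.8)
The plan is to mirror the proof of Lemma \ref{lemma1}, but with the unitary-invariance relation replaced everywhere by the twisted-invariance relation \eqref{uJu}, used in its two equivalent forms $u_gJ=Ju_g^{-1}$ and $u_g^{*}J=Ju_g$. Throughout I would work on the basis vectors $\xi\otimes\delta_g$ and extend antilinearly, noting that $cc\,\delta_g=\delta_g$, so that concretely $j(\xi\otimes\delta_g)=u_gJ\xi\otimes\delta_g$.

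Points $(1)$, $(2)$, $(4)$ and $(5)$ are then short. For $(1)$, unitarity of $u_g$ and isometry of $J$ give $\norm{j(\xi\otimes\delta_g)}=\norm{\xi}$, and since $j$ preserves each orthogonal summand $H\otimes\delta_g$ it is isometric. For $(2)$ I compute $j^2(\xi\otimes\delta_g)=(u_gJu_g)J\xi\otimes\delta_g$ and invoke \eqref{uJu} to replace $u_gJu_g$ by $J$, giving $J^2=\varepsilon$; in particular $j^{-1}=\varepsilon j$. For $(4)$ the $G$-invariance $[D,u_g]=0$ yields $Du_gJ=u_gDJ=\varepsilon'u_gJD$, hence $(D\otimes1)j(\xi\otimes\delta_g)=\varepsilon'u_gJD\xi\otimes\delta_g=\varepsilon'j(D\otimes1)(\xi\otimes\delta_g)$. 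Point $(5)$ is purely a matter of antilinearity: since $M_l\delta_g=l(g)\delta_g$ with $l(g)\in\R$ and $j$ fixes $\delta_g$, passing the scalar $\pm i$ through $j$ flips its sign, so $(\pm i\otimes M_l)j=-j(\pm i\otimes M_l)$. It is worth stressing that, unlike Lemma \ref{lemma1}$(5)$, this needs \emph{no} assumption that $l$ be a homomorphism, precisely because here $\delta_g$ is not flipped to $\delta_{g^{-1}}$.

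The real work is point $(3)$, the zeroth-order (commutant) condition, and this is where the twisted invariance and abelianness are both indispensable. Using $j^{-1}=\varepsilon j$ I would first record
\[
 j\,\hat{\pi}_{2}(b)\hat{\Gamma}_g\, j^{-1}(\eta\otimes\delta_y)=\varepsilon\,u_{gy}J\pi(b)u_{gy}J\,\eta\otimes\delta_{gy},
\]
and then evaluate both orders of composition of this operator with $\hat{\pi}_{2}(a)\hat{\Gamma}_h$ on $\xi\otimes\delta_x$; abelianness ensures both land in $H\otimes\delta_{hgx}$. Writing $k=hgx$, the product $\hat{\pi}_{2}(a)\hat{\Gamma}_h\cdot j\hat{\pi}_{2}(b)\hat{\Gamma}_gj^{-1}$ acts through the operator $\varepsilon\,\pi(a)u_kJ\pi(b)u_{gx}J$, which I simplify by successively applying $u_kJ=Ju_k^{-1}$, the covariance $u_k^{-1}\pi(b)=\pi(\alpha_{k^{-1}}(b))u_k^{-1}$, the abelian identity $u_k^{-1}u_{gx}=u_{h^{-1}}=u_h^{*}$, and $u_h^{*}J=Ju_h$, reducing it to $\varepsilon\,\pi(a)J\pi(\alpha_{k^{-1}}(b))J\,u_h$. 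The reversed product acts through $\varepsilon\,u_kJ\pi(b)u_kJ\pi(a)u_h$, which collapses by the same two moves to $\varepsilon\,J\pi(\alpha_{k^{-1}}(b))J\pi(a)u_h$. Subtracting and using $J^{-1}=\varepsilon J$ to convert $J\,(\cdot)\,J$ into $J\,(\cdot)\,J^{-1}$, the difference is exactly
\[
 \bigl[\pi(a),\,J\pi(\alpha_{(hgx)^{-1}}(b))J^{-1}\bigr]\,u_h\xi\otimes\delta_{hgx},
\]
which vanishes by the zeroth order condition satisfied by $J$. Hence $j$ maps $A\rtimes_\alpha G$ into its commutant.

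The main obstacle is the bookkeeping in $(3)$: keeping the group indices straight while repeatedly commuting the $u$'s past $J$ and $\pi(b)$, and correctly carrying the antilinear scalar $\varepsilon=J^2$. The structural point, and the reason no extra hypothesis is needed, is that the two composition orders differ \emph{precisely} by the zeroth-order commutator of $J$; the twisted invariance \eqref{uJu} is what makes the two expressions line up after the $u$'s are absorbed into $J$, and the abelianness of $G$ is what makes the index $k^{-1}u_{gx}$ collapse to $u_h^{*}$.
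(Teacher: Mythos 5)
Your proposal is correct and follows essentially the same route as the paper: points (1), (2), (4), (5) are verified by the same direct computations, and point (3) reduces, after absorbing the unitaries into $J$ via the twisted invariance $u_gJ=Ju_g^{*}$ and the covariance relation, to the commutator $\left[\pi(a),J\pi(\alpha_{(hgx)^{-1}}(b))J^{-1}\right]u_h\xi\otimes\delta_{hgx}$, exactly as in the paper (your bookkeeping with $j^{-1}=\varepsilon j$ and the explicit factor $\varepsilon=J^{2}$ is an equivalent reformulation of the paper's use of $j^{-1}$ written directly with $J^{-1}$ and $u_x^{*}$). Your observation that point (5) needs no hypothesis on $l$, unlike Lemma \ref{lemma1}(5), is also accurate.
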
  

\begin{proof}Point $(1)$ is clear as both $u$ and $J$ are isometric. To prove point $(2)$ note that
	\begin{displaymath}
	j^{2}(\xi\otimes \delta_{g}) = u_{g}Ju_{g}J\xi\otimes \delta_{g} = u_{g}u_{g}^{*}J^{2}\xi\otimes \delta_{g} = \varepsilon (\xi\otimes \delta_{g}).
	\end{displaymath} 
	Point $(3)$ is a straightforward computation: for any $a,b\in A$ and $g,h\in G$ we have
		\begin{displaymath}
	\begin{split}
	\left[\hat{\pi}_{2}(a)\hat{\Gamma}_{h}, j\hat{\pi}_{2}(b)\hat{\Gamma}_{g}j^{-1}\right](\xi\otimes \delta_{x}) &= \pi(a)u_{h}u_{xg}J\pi(b)u_{g}J^{-1}u_{x}^{*}\xi\otimes \delta_{xgh}\\
	& \qquad \qquad - u_{xhg}J\pi(b)u_{g}J^{-1}u_{xh}^{*}\pi(a)u_{h}\xi \otimes \delta_{xhg}\\
	&= \left[\pi(a), J\pi(\alpha_{xgh}^{-1}(b))J^{-1}\right]u_{h}\xi \otimes \delta_{xgh} =0\\
	\end{split}
	\end{displaymath}
	as $J$ satisfies the zero order condition and $G$ is abelian. To prove point $(4)$ note that
	\begin{displaymath}
	(D\otimes 1) j (\xi\otimes \delta_{g})= Du_{g}J\xi\otimes \delta_{g} =  \varepsilon^{\prime} u_{g}JD \xi\otimes \delta_{g}= \varepsilon^{\prime} j(D\otimes 1)(\xi\otimes \delta_{g})
	\end{displaymath}
	by the invariance of $D$. Point $(5)$ is clear as $J$ is anti-linear. 
\end{proof}

\noindent 
We then claim that the equivariant real structure $\widetilde{J}$  for $\widehat{D}$ when $n$ is odd is given by  
	\begin{itemize}
	\item $		\widetilde{J}= j\otimes cc\circ \sigma_{1}\quad $  for $n=3,7$
	
	\item $\widetilde{J}= j\otimes cc\circ \sigma_{3}\quad  $ for $n=1,5$
\end{itemize}
on the Hilbert space $\widehat{H}=H\otimes \ell^{2}G\otimes \mathbb{C}^{2}$. When $n$ is even,  $\widetilde{J}$   is instead given by
\begin{itemize}
		\item $\widetilde{J}=J\otimes \textup{cc}\quad $ for $n=0,4$
	\item $\widetilde{J}= \rchi J\otimes \textup{cc}\quad $ for $n=2,6$
\end{itemize}
on the Hilbert space $\widehat{H} = H\otimes \ell^{2}(G)$.

\begin{proof}[Proof of Theorem \ref{thm2}]Suppose as a first case that the triple $(\mathcal{A}, H, D)$ is odd. The zeroth-order condition  comes directly from the zeroth order condition in Lemma \ref{lemma2} and the  diagonal/anti-diagonal form of $\widetilde{J}$. Let us now discuss the triple of signs $(\varepsilon,\varepsilon^{\prime},\varepsilon^{\prime\prime} )$: from the previous lemma we easily deduce that the real structure $\widetilde{J}$ has the same $\varepsilon$  sign as $J$. An easy computation shows that the $\varepsilon^{\prime}$ sign remains the same for $n= 3,7$ and changes for $n=1,5$ (that is, it is always $+1$). Further $\widetilde{J}$ is always odd with respect to the grading $\rchi=\sigma_{3}$ when $n=3,7$ and even when $n=1,5$. We have therefore checked the theorem for all the possible cases of $(\mathcal{A}, H, D)$ odd. \\
 
 Suppose now that the real triple $(\mathcal{A}, H, D,J)$ is even with respect to the grading $\rchi$ and suppose that $\rchi= \sigma_{3}$, $H= H_{0}\oplus H_{1}$ and 
 \begin{displaymath}
D=\left(\begin{matrix}
0 & D_{0} \\
D_{0}^{*} & 0
\end{matrix}\right).
 \end{displaymath} 
By the compatibility conditions of $J$ with the grading $\rchi$ we deduce that $J$ must be of the following form:
\begin{itemize}
	\item $J=\left(\begin{matrix}
	j_{1} & 0 \\
	0 & j_{2}
	\end{matrix}\right)$ for $n=0,4$,
	\item $J= \left(\begin{matrix}
	0 & j_{1} \\
	j_{2} & 0
	\end{matrix}\right)$ for $n=2,6$.
\end{itemize}
Now we check case by case. If $n=0,4$ then by assumption
\begin{displaymath}
\left(\begin{matrix}
0 & D_{0} \\
D^{*}_{0} & 0
\end{matrix}\right)\left(\begin{matrix}
j_{1} & 0 \\
0 & j_{2}
\end{matrix}\right) =\left(\begin{matrix}
j_{1} & 0 \\
0 & j_{2}
\end{matrix}\right)\left(\begin{matrix}
0 & D_{0} \\
D^{*}_{0} & 0
\end{matrix}\right)
\end{displaymath}
and, recalling that $\widehat{D}$ is given by  \eqref{dual1}, we get 
\begin{displaymath}
\left(\begin{matrix}
1\otimes M_{l} & D_{0}\otimes 1 \\
D^{*}_{0}\otimes 1 & -1\otimes M_{l}
\end{matrix}\right)\left(\begin{matrix}
j_{1}\otimes 1 & 0 \\
0 & j_{2}\otimes 1
\end{matrix}\right) =\left(\begin{matrix}
j_{1}\otimes 1 & 0 \\
0 & j_{2}\otimes 1
\end{matrix}\right)\left(\begin{matrix}
1\otimes M_{l} & D_{0}\otimes 1 \\
D^{*}_{0}\otimes 1 & -1\otimes M_{l}
\end{matrix}\right).
\end{displaymath}
Thus the triples on $A$ and on $A\rtimes_{\alpha,r}G$ have the same sign $\varepsilon^{\prime}$. 
Also the sign $\varepsilon$ remains the same, but since 
the grading disapears, the dimension $0$ (or $8\mod 8$) goes to $7$ and the dimension $4$ goes to $3$. In an analogous way one shows that dimensions $2$ and $6$ go  to  dimensions $1$ and $5$ respectively.
\end{proof}

The following results show that the crossed product spectral triple construction is compatible with the first and second order conditions. 

\begin{prop}Let $G$ be an abelian discrete group endowed with a proper first-order Dirac weight $l\colon G\rightarrow \mathbb{R}$ and let $(\mathcal{A},H,D,u)$ be a $G$-invariant (even or odd) spectral triple on a unital $C^{*}$-algebra $A$ endowed with a twisted invariant real structure $J$. If $(\mathcal{A},H,D,J)$ satisfies the first order condition, then $(C_{c}(G,\mathcal{A}), \widehat{H}, \widehat{D}, \widetilde{J})$ also satisfies the first order condition. 
\end{prop}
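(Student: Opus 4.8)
The plan is to mirror the proof of the first-order-condition proposition of Subsection 4.1, but working with the twisted auxiliary map $j$ of Lemma \ref{lemma2} and exploiting that $G$ is abelian and that $l$ is of first-order type. Since the even case is entirely analogous and the $\mathbb{C}^{2}$-factor only contributes the diagonal/anti-diagonal bookkeeping already used in the proof of Theorem \ref{thm2}, I would treat the odd case and, abusing notation as in the first case, write $\widehat{D}=D\otimes 1\pm i\otimes M_{l}$ for the off-diagonal block. For $a,b\in A$ and $g,h\in G$ the commutator
\[
\left[[\widehat{D},\hat{\pi}_{2}(a)\hat{\Gamma}_{h}],\; j\,\hat{\pi}_{2}(b)\hat{\Gamma}_{g}\,j^{-1}\right]
\]
splits into a piece $C_{1}$ coming from $D\otimes 1$ and a piece $C_{2}$ coming from $\pm i\otimes M_{l}$, and I would show each vanishes separately.

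The first step is to compute the conjugated representation $j\,\hat{\pi}_{2}(b)\hat{\Gamma}_{g}\,j^{-1}$ explicitly. Using $j(\xi\otimes\delta_{x})=u_{x}J\xi\otimes\delta_{x}$ (here $cc$ fixes the basis $\delta_{x}$), the twisted-invariance relations $u_{x}J=Ju_{x}^{*}$ and $J^{-1}u_{x}^{*}=u_{x}J^{-1}$ coming from \eqref{uJu}, the covariance $u_{k}^{*}\pi(b)u_{k}=\pi(\alpha_{k^{-1}}(b))$, and crucially the commutativity of $G$ (so that $u_{g}^{*}u_{x}^{*}=u_{gx}^{*}$), I expect to obtain
\[
j\,\hat{\pi}_{2}(b)\hat{\Gamma}_{g}\,j^{-1}(\xi\otimes\delta_{x})=J\pi(\alpha_{(gx)^{-1}}(b))J^{-1}\xi\otimes\delta_{gx},
\]
together with the companion identity $u_{h}\,J\pi(c)J^{-1}=J\pi(\alpha_{h^{-1}}(c))J^{-1}u_{h}$, which is what lets me collect both halves of each commutator over the common index $\delta_{ghx}$ (again using $hgx=ghx$).

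Feeding these into $C_{1}$ and using the invariance of $D$, I anticipate that $C_{1}(\xi\otimes\delta_{x})$ collapses to $\left[[D,\pi(a)],J\pi(\alpha_{(ghx)^{-1}}(b))J^{-1}\right]u_{h}\xi\otimes\delta_{ghx}$, which vanishes by the first order condition for $J$. For $C_{2}$ the commutator with $1\otimes M_{l}$ produces the scalar factors $l(hgx)-l(gx)$ and $l(hx)-l(x)$ in front of the two halves; this is exactly where the hypothesis that $l$ is a first-order Dirac weight enters, via Lemma \ref{68}(3), to guarantee $l(hgx)-l(gx)=l(hx)-l(x)$. Once the coefficients agree, $C_{2}(\xi\otimes\delta_{x})$ reduces to $(l(hx)-l(x))\left[\pi(a),J\pi(\alpha_{(ghx)^{-1}}(b))J^{-1}\right]u_{h}\xi\otimes\delta_{ghx}$, which vanishes by the zeroth order condition for $J$. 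Combining $C_{1}=C_{2}=0$ and invoking the diagonal/anti-diagonal form of $\widetilde{J}$ on the $\mathbb{C}^{2}$-factor finishes the odd case.

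The main obstacle I anticipate is purely organizational: keeping the twisted-invariance identities $u_{g}J=Ju_{g}^{*}$ and $J^{-1}u_{g}^{*}=u_{g}J^{-1}$ straight while repeatedly commuting the $u$'s past $J$ and past $D$, and ensuring that every use of commutativity of $G$ (needed both to merge $u_{g}^{*}u_{x}^{*}$ and to identify $hgx$ with $ghx$) is legitimate. Conceptually the only substantive new input beyond the first-case argument is the replacement of the homomorphism hypothesis on $l$ by the weaker first-order-type condition; accordingly, the one point I would verify carefully is that Lemma \ref{68}(3) indeed yields the weight identity $l(hgx)-l(gx)=l(hx)-l(x)$ for all $g,h,x\in G$, and not merely for a genuine homomorphism.
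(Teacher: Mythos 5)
Your proposal is correct and follows essentially the same route as the paper: the same splitting into the $D\otimes 1$ piece $C_{1}$ and the $\pm i\otimes M_{l}$ piece $C_{2}$, the same simplification $j\,\hat{\pi}_{2}(b)\hat{\Gamma}_{g}\,j^{-1}(\xi\otimes\delta_{x})=J\pi(\alpha_{(gx)^{-1}}(b))J^{-1}\xi\otimes\delta_{gx}$ via twisted invariance and commutativity of $G$, and the same use of Lemma~\ref{68}(3) to equate the two weight coefficients before invoking the first and zeroth order conditions for $J$. The only differences are cosmetic (a relabelling of $g$ and $h$ and the order of the group elements in the indices, which is immaterial since $G$ is abelian).
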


\begin{proof}We prove the first order condition only for 
$(\mathcal{A},H,D,u,J)$ odd; the even case is similar. For any $a,b\in \mathcal{A}$ and $g,h\in G$, the desired commutator
	\begin{displaymath}
	\left[\left[D\otimes 1 \pm i\otimes M_{l}, \hat{\pi}_{2}\rtimes\hat{\Gamma}(a\delta_{g})\right], j\hat{\pi}_{2}\rtimes \hat{\Gamma} (b\delta_{h})j^{-1}\right] 
	\end{displaymath}
	is equal to the sum of the following two pieces:
	\begin{displaymath}
	C_{1} =  \left[\left[D\otimes 1 ,\hat{\pi}_{2}(a)\hat{\Gamma}_{g}\right],j\hat{\pi}_{2}(b)\hat{\Gamma}_{h}j^{-1}\right] 
	\end{displaymath} 
	\begin{displaymath}
	C_{2}= \pm i\left[\left[ 1\otimes M_{l},\hat{\pi}_{2}(a)\hat{\Gamma}_{g}\right],j\hat{\pi}_{2}(b)\hat{\Gamma}_{h}j^{-1}\right] .
	\end{displaymath}
	We prove that they are separately vanishing. On the one hand, using the invariance of $D$ with respect to the action of $G$, we have that 
	\begin{displaymath}
	\begin{split}
	C_{1}(\xi\otimes \delta_{x}) &= \left[D\otimes 1, \hat{\pi}_{2}(a)\hat{\Gamma}_{g}\right]\left(u_{hx}J\pi(b)u_{h}J^{-1}u_{x}^{*}\xi\otimes \delta_{hx}\right) \\
	& \qquad\qquad   - j\hat{\pi}(b)\hat{\Gamma}_{h}j^{-1}\left(D\pi(a)u_{g}\xi\otimes \delta_{gx} - \pi(a)u_{g}D\xi \otimes \delta_{gx}\right)\\
	&= [D,\pi(a)]u_{xgh}J\pi(b)u_{h}J^{-1}u_{x}^{*}\xi\otimes \delta_{xgh} \\
	& \qquad \qquad - u_{xgh}J\pi(b)u_{h}J^{-1}u_{xg}^{*}[D,\pi(a)]u_{g}\xi\otimes \delta_{xgh} \\
	&= \left[\left[D,\pi(a)\right],Ju_{xgh}^{*}\pi(b)u_{xgh}J^{-1}\right]u_{g}\xi\otimes \delta_{xgh} \\
	&= 0
	\end{split}
	\end{displaymath}
	since $J$ implements the first order condition. On the other hand
	\begin{displaymath}
	\begin{split}
	\pm iC_{2}(\xi\otimes \delta_{x}) 	&= \left[1\otimes M_{l},\hat{\pi}_{2}(a)\hat{\Gamma}_{g}\right]u_{xh}J\pi(b)u_{h}J^{-1}u_{x}^{*}\xi\otimes \delta_{xgh} \\
	& \qquad \qquad - j\hat{\pi}_{2}(b)\hat{\Gamma}_{h}j^{-1}\left(\pi(a)u_{g}\xi\otimes l(xg)\delta_{xg} - \pi(a)u_{g}\xi \otimes l(x)\delta_{xg}\right)	\\
	& = \left(l(xgh) - l(xh)\right)\pi(a)u_{xgh}J\pi(b)u_{h}J^{-1}u_{x}^{*}\xi\otimes \delta_{xgh} \\
	& \qquad \qquad - \left(l(xg) - l(x)\right)u_{xgh}J\pi(b)u_{h}J^{-1}u_{xg}^{*}\pi(a)u_{g}\xi\otimes \delta_{xgh}.
	\end{split}
	\end{displaymath}
	Since $l\colon G\rightarrow \mathbb{R}$ is of first order, we have that $	l(xgh)- l(xh) = l(xg) - l(x)$. Then
	\begin{displaymath}
	\pm iC_{2}(\xi\otimes \delta_{x}) = \left(l(xg) - l(x)\right)\left[\pi(a),J\pi(\alpha_{xgh}^{-1}(b))J^{-1}\right]u_{g}\xi\otimes \delta_{xgh}
	\end{displaymath}
	which is zero since $J$ implements the zeroth order condition. 
\end{proof}

\begin{prop}Let $G$ be an abelian discrete group endowed with a proper first-order Dirac weight $l\colon G\rightarrow \mathbb{R}$ and let $(\mathcal{A},H,D,u)$ be a $G$-invariant (even or odd) spectral triple on a unital $C^{*}$-algebra $A$ endowed with a twisted invariant real structure $J$ which satisfies the first order condition. If $(\mathcal{A},H,D,J)$ satisfies the second order condition, then $(C_{c}(G,\mathcal{A}), \widehat{H}, \widehat{D}, \widetilde{J})$ also satisfies the second order condition. 
\end{prop}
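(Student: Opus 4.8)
The plan is to mirror the second-order-condition argument of the unitarily invariant case, systematically replacing the relations coming from $u_{g}Ju_{g}^{*}=J$ by those coming from the twisted invariance $u_{g}Ju_{g}=J$, i.e.\ $Ju_{g}=u_{g}^{*}J$, and replacing the homomorphism property of $l$ by the first-order property of the Dirac weight. First I would reduce to the odd case, the even case being analogous. With the abuse of notation $\widehat{D}=D\otimes 1\pm i\otimes M_{l}$ as in the unitarily invariant proof, I would expand the target commutator $\left[[\widehat{D},\hat{\pi}_{2}(a)\hat{\Gamma}_{h}],\,\widetilde{J}[\widehat{D},\hat{\pi}_{2}(b)\hat{\Gamma}_{g}]\widetilde{J}^{-1}\right]$ into the four pieces $C_{1},C_{2},C_{3},C_{4}$ obtained by splitting each $\widehat{D}$ into its $D\otimes 1$ and $\pm i\otimes M_{l}$ parts, and prove that each vanishes at the level of the antilinear map $j$ of \eqref{34b}. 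The diagonal/anti-diagonal shape of $\widetilde{J}$ (namely $j\otimes\textup{cc}\circ\sigma_{1}$, $j\otimes\textup{cc}\circ\sigma_{3}$ in the odd case) then assembles these scalar identities into the full $2\times 2$ statement exactly as before.

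The two computational inputs are the conjugations of the basic commutators by $j$. Using $[D,u_{g}]=0$, the covariance $u_{g}\pi(b)u_{g}^{*}=\pi(\alpha_{g}(b))$, the twisted relations $u_{g}J=Ju_{g}^{*}$ and $u_{g}^{*}J^{-1}=J^{-1}u_{g}$ (all immediate from Lemma \ref{lemma2} and \eqref{uJu}), I would establish
\[
j[D\otimes 1,\hat{\pi}_{2}(b)\hat{\Gamma}_{g}]j^{-1}(\xi\otimes\delta_{x})=J[D,\pi(\alpha_{(gx)^{-1}}(b))]J^{-1}\xi\otimes\delta_{gx}
\]
and, because $j$ preserves the $\ell^{2}(G)$-label while its antilinearity flips the sign of $\pm i$ (Lemma \ref{lemma2}(5)),
\[
j[\pm i\otimes M_{l},\hat{\pi}_{2}(b)\hat{\Gamma}_{g}]j^{-1}(\xi\otimes\delta_{x})=\mp i\,(l(gx)-l(x))\,J\pi(\alpha_{(gx)^{-1}}(b))J^{-1}\xi\otimes\delta_{gx}.
\]
Feeding these into the four products and transporting the remaining $u_{h}$ to the right via $u_{h}^{*}[D,\pi(c)]=[D,\pi(\alpha_{h^{-1}}(c))]u_{h}^{*}$ together with $u_{h}^{*}J^{-1}=J^{-1}u_{h}$, I expect $C_{1}$ to collapse to $\left[[D,\pi(a)],J[D,\pi(\alpha_{(ghx)^{-1}}(b))]J^{-1}\right]u_{h}\xi\otimes\delta_{ghx}$, which is zero by the second order condition for $J$; $C_{4}$ to collapse to a scalar multiple of $\left[\pi(a),J\pi(\alpha_{(ghx)^{-1}}(b))J^{-1}\right]u_{h}\xi\otimes\delta_{ghx}$, zero by the zeroth order condition; and the mixed terms $C_{2},C_{3}$ to collapse to scalar multiples of $\left[[D,\pi(a)],J\pi(\alpha_{(ghx)^{-1}}(b))J^{-1}\right]$ and $\left[\pi(a),J[D,\pi(\alpha_{(ghx)^{-1}}(b))]J^{-1}\right]$ respectively, both zero by the first order condition for $J$.

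The main obstacle, and the step where the hypotheses are genuinely consumed, is forcing the two summands of each $C_{i}$ to carry the \emph{same} twisting automorphism and the \emph{same} scalar, so that they combine into an honest commutator. For the operator part this uses the abelian-ness of $G$ to identify $hgx$ with $ghx$ and $(gx)^{-1}$ with $(hgx)^{-1}$ after pushing $u_{h}$ across $J$; for the scalar part it uses the first-order property of $l$, namely that the translation difference $l(gx)-l(x)$ equals a constant depending only on $g$ (the value $\varphi(g)$ of the homomorphism part in Lemma \ref{68}), so that the prefactors in $C_{2},C_{3},C_{4}$ agree in both orders of multiplication and can be pulled out. Once these alignments are verified, the reductions to the zeroth, first and second order conditions for $J$ are immediate, and the diagonal/anti-diagonal bookkeeping for $\widetilde{J}$ promotes the $j$-level vanishing to the full $\widetilde{J}$-statement, completing the proof.
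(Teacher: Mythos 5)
Your proposal is correct and follows essentially the same route as the paper's proof: the same decomposition into the four commutators $C_{1},\dots,C_{4}$, the same two conjugation formulas for $j[D\otimes 1,\cdot]j^{-1}$ and $j[\pm i\otimes M_{l},\cdot]j^{-1}$, and the same reduction of $C_{1}$, $C_{4}$ and the mixed terms to the second, zeroth and first order conditions for $J$, with the diagonal/anti-diagonal form of $\widetilde{J}$ finishing the argument. Your observation that the scalar $l(gx)-l(x)$ depends only on $g$ by the first-order property is exactly the point the paper abbreviates as ``$l(g)$''.
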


\begin{proof}	
	Let us  focus on $(\mathcal{A},H,D,u,J)$ odd cause the even case is similar. With a slight abuse of notation, let us denote $\widehat{D}=D\otimes 1 \pm i\otimes M_{l}$. To prove the required commutation relation 
	\begin{displaymath}
	[\widehat{D},\hat{\pi}_{2}(a)\hat{\Gamma}_{h}]j[\widehat{D},\hat{\pi}_{2}(b)\hat{\Gamma}_{g}]j^{-1} = j[\widehat{D},\hat{\pi}_{2}(b)\hat{\Gamma}_{g}]j^{-1}[\widehat{D},\hat{\pi}_{2}(a)\hat{\Gamma}_{h}]
	\end{displaymath}we will prove that the following four commutators are vanishing:
	\begin{displaymath}
	\begin{split}
	C_{1} &= \left[[D\otimes 1,\hat{\pi}_{2}(a)\hat{\Gamma}_{h}], j[D\otimes 1,\hat{\pi}_{2}(b)\hat{\Gamma}_{g}]j^{-1}\right]\\
	C_{2} &= \left[[D\otimes 1,\hat{\pi}_{2}(a)\hat{\Gamma}_{h}], j[\pm i\otimes M_{l},\hat{\pi}_{2}(b)\hat{\Gamma}_{g}]j^{-1}\right]\\
	C_{3} &= \left[[\pm i\otimes M_{l},\hat{\pi}_{2}(a)\hat{\Gamma}_{h}], j[D\otimes 1,\hat{\pi}_{2}(b)\hat{\Gamma}_{g}]j^{-1}\right]\\
	C_{4} &= \left[[\pm i\otimes M_{l},\hat{\pi}_{2}(a)\hat{\Gamma}_{h}], j[\pm i\otimes M_{l},\hat{\pi}_{2}(b)\hat{\Gamma}_{g}]j^{-1}\right]\\
	\end{split}
	\end{displaymath}
	for any $a,b\in \mathcal{A}$ and $g,h\in G$. The  diagonal/anti-diagonal form of $\widetilde{J}$ then brings the thesis. First of all note that:
	\begin{displaymath}
	\begin{split}
	j[D\otimes 1,\hat{\pi}_{2}(b)\hat{\Gamma}_{g}]j^{-1}(\xi\otimes\delta_x)&=u_{gx}J[D,\pi(b)]u_gJ^{-1}u_x^{*}\xi\otimes\delta_{gx}\\
	&=J[D,\pi(\alpha_{gx}^{-1}(b))]J^{-1}\xi\otimes\delta_{gx}
	\end{split}
	\end{displaymath}
	 and that
	\begin{displaymath}
	\begin{split}
	j[\pm i\otimes M_{l},\hat{\pi}_{2}(b)\hat{\Gamma}_{g}]j^{-1}(\xi\otimes\delta_x)	&=\mp iu_{gx}J\pi(b)u_gJ^{-1}u_x^{*}\xi\otimes l(g)\delta_{gx}\nonumber\\
	&=\mp iJ\alpha_{gx}^{-1}(b)J^{-1}\xi\otimes l(g)\delta_{gx}\label{1C6}
	\end{split}
	\end{displaymath}
	as $l$ is of first order. It is relatively easy then to compute
	\begin{displaymath}
	C_{1} = \left[[D,\pi(a)], J[D,\pi(\alpha_{hgx}^{-1}(b))]J^{-1}\right]u_h\xi\otimes\delta_{hgx}
	\end{displaymath}
	that vanishes since $J$ implements the second order condition. Furthermore, as $l(h)l(g)=l(g)l(h)$ for any $g,h\in G$, we have that
	\begin{displaymath}
	C_{4}= \left[\pi(a), J\pi(\alpha_{hgx}^{-1}(b))J^{-1} \right]u_h\xi\otimes l(g)l(h)\delta_{hgx}
	\end{displaymath}
	vanishes since $J$ implements the zeroth order condition. Finally, the two mixed terms
	\begin{displaymath}
	\begin{split}
	C_{2} &= \mp i\left[[D,\pi(a)], J\pi(\alpha_{hgx}^{-1}(b))J^{-1} \right]u_h\xi\otimes l(g)\delta_{hgx} \\
	C_{3} &= \pm i\left[ \pi(a), J[D,\pi(\alpha_{hgx}^{-1}(b))]J^{-1}\right] u_h\xi\otimes l(h)\delta_{hgx}
	\end{split}
	\end{displaymath}
	vanish by the first order condition for $J$.

\end{proof}

\subsection{Equivariant Real Structures}\indent

In the previous two subsections we constructed real structures $\widehat{J}$ and  (for abelian $G$) $\widetilde{J}$ on 
$(C_{c}(G,\mathcal{A}),\widehat{H},\widehat{D})$ starting from a real structure $J$ on $(\mathcal{A},H,D,u)$ which is suitably $G$-invariant.
In this subsection we give a unifying picture 
interpreting the relations between $J$ with $u$ in terms of
the (unitary) action of the Hopf $*$-algebra $\mathbb{C}G$ endowed with a suitable $*$-structure. 
This will explain the reason why in the case of $\widetilde{J}$
we must assume the group is abelian. Furthermore, we will show that in both cases $\widehat{J}$ and $\widetilde{J}$ are equivariant under the dual coaction of $\mathbb{C}G$ consistently with $J$. But first, we need to recall some basic facts and definitions about Hopf algebras.\\

	Let $\mathcal{H}$ be a unital Hopf $*$-algebra, with coproduct homomorphism $\Delta \colon \mathcal{H}\rightarrow \mathcal{H}\otimes \mathcal{H}$, counit homomorphism $\varepsilon \colon \mathcal{H}\rightarrow \mathbb{C}$, and antipode anti-homomorphism $S\colon \mathcal{H}\rightarrow \mathcal{H}$, satisfying the usual axioms (c.f \cite{timmermann2008invitation}) and an antilinear involutive anti-homomorphism  $*$ such that 
	\begin{enumerate}
		\item $\Delta(h^{*}) = h_{(1)}^{*}\otimes h_{(2)}^{*}$ for every $h\in \mathcal{H}$,
		\item $\varepsilon(h^{*}) = \overline{\varepsilon(h)}$ for every $h\in \mathcal{H}$,
		\item $(S\circ *)^{2} = \textup{id}$.
	\end{enumerate}
We adopt Sweedler's notation $\Delta h = \sum h_{(1)}\otimes  h_{(2)}$ for $h\in \mathcal{H}$ with the summation symbol  often omitted for the sake of brevity. We are mostly  interested in the following example.

\begin{exa}\label{57}
	Let $G$ be a group and $\mathcal{H}=\mathbb{C}G$ its group algebra. It is a Hopf algebra with respect to the maps 
	$\Delta$, $\varepsilon$ and $S$ determined on generators by
	\begin{displaymath}
		\Delta(\delta_{g}) = \delta_{g}\otimes \delta_{g}, \qquad \varepsilon(\delta_{g}) = 1, \qquad S\delta_{g}=\delta_{g^{-1}}
	\end{displaymath}
	and extended linearly. 	This Hopf algebra admits a canonical $*$-structure given by the \emph{anti}-linear extension of the map
	\begin{equation}\label{691}
		\delta_{g}^{*} = \delta_{g^{-1}}.
	\end{equation}
	However, it is not unique 	when $G$ is abelian:
 the \emph{anti}-linear extension of the map 
	\begin{equation}\label{692}
		\delta_{g}^{\star}=\delta_{g}
	\end{equation} is also a $*$-structure. (Note that we use two different star symbols).	
	\demo
\end{exa}

The closure of the Hopf algebra $\mathcal{H}=\mathbb{C}G$
is the quantum group $C^{*}_{r}(G)$, which we used in Definition\,\ref{77} and Example\,\ref{79} together with its coactions. Below we will  instead need the {\em actions} of 
$\mathcal{H}$.

Given a Hopf algebra $\mathcal{H}$, we say that an algebra $A$ is a \emph{left $\mathcal{H}$-module algebra} if $A$ is a left $\mathcal{H}$-module and the representation is compatible with  the algebra structure of $A$, namely
\begin{displaymath}
	h\triangleright (a_{1}a_{2}) = (h_{(1)}\triangleright a_{1})(h_{(2)}\triangleright a_{2})
\end{displaymath}
for any $h\in \mathcal{H}$ and $a_{1}, a_{2}\in A$. If $A$ is unital, we further require that 
\begin{displaymath}
	h\triangleright 1 = \varepsilon(h) 
\end{displaymath}
for any $ h\in\mathcal{H}$. Let $A$ be a left $H$-module algebra and $M$ a left $A$-module. We say that $M$ is a \emph{left $H$-equivariant $A$-module} if $M$ is a left $H$-module and
\begin{displaymath}
	h\triangleright (am) = (h_{(1)}\triangleright a)(h_{(2)}\triangleright m)
\end{displaymath}
for any $h\in H$, $a\in A$ and $m\in M$. In the following, when dealing with a Hopf $*$-algebra $\mathcal{H}$ and an $\mathcal{H}$-module algebra $A$ endowed with a $*$-involution, we will always assume that the action of $\mathcal{H}$ is compatible with the star structure of $A$ in the sense that
\begin{equation}\label{86}
	(h\triangleright a)^{*} = (Sh)^{*}\triangleright a^{*},
	\qquad \forall a\in A,\; h\in \mathcal{H}.
\end{equation}

\begin{defn}[cf.\cite{sitarz2003equivariant}]
	Let $\mathcal{H}$ be a Hopf algebra and $(\mathcal{A}, H,D)$ be a spectral triple over an $\mathcal{H}$-module algebra $A$. We say that the triple is $\mathcal{H}$-\emph{equivariant} if there exists a dense subspace $W\subseteq H$ for which:
	\begin{enumerate}
		\item  $W$ is an $\mathcal{H}$-equivariant $\mathcal{A}$-module, that is:
		\begin{displaymath}
			h\triangleright (\pi(a)v) = \pi(h_{(1)}\triangleright a)(h_{(2)}\triangleright v)
		\end{displaymath}
		for any $h \in \mathcal{H}$, $v\in W$ and $a\in \mathcal{A}$. 
		\item the commutator $[D,h\,\triangleright]$ is bounded on its domain for any $h \in \mathcal{H}$. 
	\end{enumerate}
	If the commutators $[D,h\,\triangleright]$ vanish for every $h \in \mathcal{H}$ we say that the triple is \emph{invariant}.
\end{defn}

\begin{defn}[cf.\cite{sitarz2003equivariant}]\label{70}
	Let $\mathcal{H}$ be a Hopf $*$-algebra and $(\mathcal{A}, H, D )$ an $\mathcal{H}$-equivariant spectral triple over the $\mathcal{H}$-module $C^{*}$-algebra $A$. A real structure $J$ is said to be \emph{equivariant} if there exists a dense subspace $V\subseteq H$ such that for any $h\in \mathcal{H}$
	\begin{equation}\label{38}
		Jh\triangleright J^{-1} = (Sh)^{*}\triangleright 
	\end{equation}
as operators on $V$. 
\end{defn}

We can now explain the commutation relations \eqref{uJu*} and \eqref{uJu} between $J$ and $u_g$ as the 
$\mathcal{H}$-equivariance in the sense of Definition \ref{70} corresponding to the two different $*$-structures \eqref{691} and \eqref{692} respectively on the Hopf algebra $\mathcal{H}=\mathbb{C}G$ as in Example \ref{57}. In the first case we use the obvious actions of 
$\mathbb{C}G$ on $H$ and $A$ 
\begin{displaymath}
	h\triangleright \xi = u_{h}\xi, \qquad g\triangleright a = \alpha_{g}(a)
\end{displaymath}
to make $A$ a $\mathbb{C}G$-module algebra and $H$ a 
$\mathbb{C}G$-equivariant $\mathcal{A}$-module. Then equation \eqref{38} for the $*$-structure \eqref{691} on 
$\mathbb{C}G$ becomes 
\begin{displaymath}
		Ju_{g}J^{-1} = u_{(Sg)^{*}} = u_{g}
	\end{displaymath}
	which means precisely that $J$ is unitarily invariant
\eqref{uJu*}. In the second case (when $G$ is abelian) we use the (less) obvious actions of $\mathbb{C}G$ on $H$ and $A$ 
\begin{displaymath}
	h\triangleright \xi = u_{h}^{*}\xi, \qquad g\triangleright a = \alpha_{g^{-1}}(a)
\end{displaymath}
to make $A$ a $\mathbb{C}G$-module algebra and $H$ a 
$\mathbb{C}G$-equivariant $\mathcal{A}$-module. Then equation \eqref{38} for the $*$-structure \eqref{692} on 
$\mathbb{C}G$ becomes
\begin{displaymath}
	Ju_{g}J^{-1} = u_{(Sg)^{*}} = u_{g^{-1}}= u_{g}^{*}
\end{displaymath} 
which means precisely that $J$ is twisted invariant \eqref{uJu}. Note that in both cases the compatibility condition \eqref{86} holds true.\\

With this unifying picture, we summarize the two constructions of this section in the following table:

\begin{center}
	\begin{tabular}{c|ccc}
		\toprule
		Group $G$ & Discrete & Discrete Abelian& \\
		$*$-structure on $\mathbb{C}G $&  $*\delta_{g} = \delta_{g^{-1}}$ & $\star\delta_{g} = \delta_{g}$ &\\
		%\midrule
		Equivariance of $J$ & $u_{g}Ju_{g}^{*} = J$ & $u_{g}Ju_{g} = J$ &\\
		weight $l\colon G\rightarrow \mathbb{R}$ & homomorphism & constant + homom. &\\
		auxiliary map $j$ & $j(\xi\otimes \delta_{g})= u_{g}^{*}J\xi\otimes \delta_{g^{-1}}$ & $j(\xi\otimes \delta_{g})= u_{g}J\xi\otimes \delta_{g}$ & \\
		& & &\\
		\multirow{4}{*}{}
		& \multirow{4}{*}{$
			\widehat{J} = \begin{dcases*} 
				& \\
				& \\
				& \\
				& \\
			\end{dcases*}$}  \hspace*{-7mm} $j\otimes \textup{cc}$  &\multirow{4}{*}{$
			\widetilde{J} = \begin{dcases*} 
				& \\
				& \\
				& \\
				& \\
			\end{dcases*}$} \hspace*{-6mm} $j\otimes cc\circ \sigma_{1}$ & \multirow{4}{*}{$
			\textup{for}\, n= \begin{dcases*} 
				& \\
				& \\
				& \\
				& \\
			\end{dcases*}$}\hspace*{-6mm} $3,7$\\
		real structure on            & \hspace*{19mm}$j\otimes cc\circ \sigma_{2}$ & \hspace*{11mm}$j\otimes cc\circ \sigma_{3}$ & \hspace*{15mm}	$1,5$\\
		$(C_{c}(G,\mathcal{A}), \widehat{H},\widehat{D})$                & \hspace*{14mm}$\rchi J\otimes J_{G}$ & \hspace*{10mm}$J\otimes \textup{cc}$ & \hspace*{15mm}  $	0,4$ \\
		&\hspace*{12mm} $J\otimes J_{G}$ &\hspace*{12mm} $\rchi J\otimes \textup{cc}$ & \hspace*{15mm}  $2,6 $\\
		& & &\\                                                     
		{KO}-dim     & $n +1 $& $n -1$& \\
		\bottomrule
	\end{tabular}
\end{center}
~\\

Let us now prove that the real structures $\widehat{J}$ and $\widetilde{J}$ are equivariant for coactions of $G$. First, we need a definition. 

	\begin{defn}[cf.\cite{bhowmick2011quantum}]\label{778}
	Let $(\mathcal{B}, H, D, \rchi)$ an (even or odd) spectral triple  equivariant for coaction of $G$ as in 
Definition\,\ref{78} and let $U\in\mathcal{L}(H\otimes C^{*}_{r}(G))$ be the unitary corepresentation of $G$ on $H$. A real structure $J$ on $(\mathcal{B}, H, D, \rchi)$
	 is said to be \emph{equivariant for coactions of $G$} if 
	\begin{equation}
		(J\otimes *) U  = U(J\otimes 1)
	\end{equation}
	on $H\otimes 1_{C^{*}_{r}(G)}$. 
\end{defn}

\begin{prop}
	Let $G$ be a discrete group endowed with a proper group homomorphism $l\colon G\rightarrow \mathbb{R}$ and $(\mathcal{A},H, D,u)$ a $G$-invariant (even or odd) spectral triple on a unital $C^{*}$-algebra $A$ endowed with a unitarily invariant real structure $J$. The real structure $\widehat{J}$ on the equivariant spectral triple $(C_{c}(G,\mathcal{A}),\widehat{H}, \widehat{D}, \hat{\pi}_{2}\rtimes\hat{\Gamma})$ on $A\rtimes_{\alpha,r}G$ defined in Theorem \ref{thm1} is equivariant for the dual coaction of $G$.  
\end{prop}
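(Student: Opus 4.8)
The plan is to verify directly the single operator identity of Definition \ref{778}, namely $(\widehat{J}\otimes *)\,U = U\,(\widehat{J}\otimes 1)$ on $\widehat{H}\otimes 1_{C^{*}_{r}(G)}$, where $U$ is the unitary corepresentation exhibited in the proof of Proposition \ref{35} and $*$ is the canonical $*$-structure $\delta_g^{*}=\delta_{g^{-1}}$ on $C^{*}_{r}(G)$ from \eqref{691}. Recall that $U$ acts on $H\otimes\ell^{2}(G)\otimes C^{*}_{r}(G)$ by $U(\xi\otimes\delta_x\otimes\delta_g)=\xi\otimes\delta_x\otimes\delta_{xg}$ and trivially on the auxiliary $\mathbb{C}^2$-factor in the odd case (where the corepresentation on $\widehat{H}$ is $U\oplus U$). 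Since $1_{C^{*}_{r}(G)}=\delta_e$, it suffices to test the identity on simple tensors $v\otimes\delta_e$ with $v$ ranging over a spanning set of $\widehat{H}$.

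The key preliminary observation, which makes the verification uniform across all eight KO-dimensions of Theorem \ref{thm1}, is the following: in every case $\widehat{J}$ acts on the $\ell^{2}(G)$-slot by the flip $J_G\colon\delta_g\mapsto\delta_{g^{-1}}$ (whether implemented through the auxiliary map $j$ of Lemma \ref{lemma1} or written directly via $J_G$), while the accompanying operators, namely $u_g^{*}J$ or $\rchi J$ on the $H$-slot and the factors $cc$, $\sigma_2$ on $\mathbb{C}^2$, live precisely in the slots on which $U$ is the identity. I would therefore isolate the relevant flip and treat the $H$- and $\mathbb{C}^2$-components as inert.

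The computation then proceeds as follows, say on $v=\xi\otimes\delta_x\otimes\zeta$ with $\widehat{J}=j\otimes cc$ (the case $n=3,7$); the remaining odd cases differ only by an inert $\sigma_2$ and the even cases by dropping the $\mathbb{C}^2$-factor. On the left-hand side $U$ first copies the $\ell^{2}(G)$-index into the $C^{*}_{r}(G)$-slot, $U(v\otimes\delta_e)=\xi\otimes\delta_x\otimes\zeta\otimes\delta_x$, and then $\widehat{J}\otimes *$ flips both that slot (through $*$) and the $\ell^{2}(G)$-slot (inside $\widehat{J}$), giving $u_x^{*}J\xi\otimes\delta_{x^{-1}}\otimes cc\,\zeta\otimes\delta_{x^{-1}}$. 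On the right-hand side $\widehat{J}\otimes 1$ first flips the $\ell^{2}(G)$-index from $x$ to $x^{-1}$, producing $u_x^{*}J\xi\otimes\delta_{x^{-1}}\otimes cc\,\zeta\otimes\delta_e$, and then $U$ copies this new index into the last slot, again yielding $u_x^{*}J\xi\otimes\delta_{x^{-1}}\otimes cc\,\zeta\otimes\delta_{x^{-1}}$. The two expressions coincide, and the antilinearity bookkeeping is consistent since both $\widehat{J}\otimes *$ and $\widehat{J}\otimes 1$ are antilinear while $U$ is linear.

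I expect no serious obstacle: the whole content is the compatibility of three matching ``flips''—the $J_G$-flip built into $\widehat{J}$, the $*$-flip on $C^{*}_{r}(G)$, and the correlation $\delta_x\otimes\delta_g\mapsto\delta_x\otimes\delta_{xg}$ of the corepresentation. The only points demanding care are (i) reducing the operator identity to its restriction to $\widehat{H}\otimes\delta_e$ before computing; (ii) handling the antilinear $*$ and $\widehat{J}$ consistently; and (iii) checking that the group-index-dependent operator $u_x^{*}$ on the $H$-slot is evaluated at the same argument on both sides, which holds precisely because $U$ reads off but never alters the $\ell^{2}(G)$-index. I would write the case $n=3,7$ in full and remark that the others follow verbatim up to the inert grading and $\mathbb{C}^2$-factors.
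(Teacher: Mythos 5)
Your proof is correct and follows essentially the same route as the paper: both verify the identity $(\widehat{J}\otimes *)\,U = U\,(\widehat{J}\otimes 1)$ on $\widehat{H}\otimes \delta_{e}$ by a direct computation on simple tensors, exploiting that the auxiliary map $j$ flips the $\ell^{2}(G)$-index exactly as $*$ does on $C^{*}_{r}(G)$, and then noting that the remaining $H$- and $\mathbb{C}^{2}$-factors of $\widehat{J}$ are untouched by $U$ (the paper phrases this last step as ``the diagonal/anti-diagonal form of $\widehat{J}$ leads to the thesis'').
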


\begin{rmk}
	If $G$ is abelian and the dual coaction 
	$\widehat{\alpha}$ is Fourier-transformed into the dual action of $\widehat{G}$, one can show that if $J$ is unitarily invariant then $\widehat{J}$ is also unitarily invariant under the action $V$ given in 	Remark \ref{52}. 
\end{rmk}

\begin{proof}
For any $\xi\in H$ and $g\in G$ we have
	\begin{displaymath}
		\begin{split}
			(j\otimes *)U(\xi\otimes \delta_{g}\otimes \delta_{e}) &= (j\otimes *)(\xi\otimes \delta_{g}\otimes \delta_{g}) = 	u_{g}^{*}J\xi\otimes \delta_{g^{-1}}\otimes \delta_{g^{-1}} \\
			&= U(u_{g}^{*}J\xi\otimes \delta_{g^{-1}}\otimes \delta_{e}) \\
			&= U(j(\xi\otimes \delta_{g})\otimes \delta_{e}).
		\end{split}
	\end{displaymath}
	The 
	diagonal/anti-diagonal	form of $\widehat{J}$ leads to the thesis. 
\end{proof}

\begin{prop}	Let $G$ be a discrete abelian group endowed with a proper Dirac weight $l\colon G\rightarrow \mathbb{R}$ and $(\mathcal{A},H, D,u)$ a $G$-invariant (even or odd) spectral triple on a unital $C^{*}$-algebra $A$ endowed with a twisted invariant real structure $J$. The real structure 
$\widetilde{J}$ on the equivariant spectral triple $(C_{c}(G,\mathcal{A}),\widehat{H}, \widehat{D}, \hat{\pi}_{2}\rtimes\hat{\Gamma})$ on $A\rtimes_{\alpha,r}G$ defined in Theorem \ref{thm1} is twisted invariant. 

\end{prop}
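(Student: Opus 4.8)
The plan is to read ``twisted invariant'' for $\widetilde{J}$ in exact analogy with the base relation \eqref{uJu}, $u_{g}Ju_{g}=J$, now with respect to the dual $\widehat{G}$-action. Since $G$ is abelian, Example \ref{79} Fourier-transforms the dual coaction $\widehat{\alpha}$ into the dual action $\widetilde{\alpha}$ of the Pontryagin dual $\widehat{G}$, which by Remark \ref{52} is implemented on $\widehat{H}$ by the unitaries $V_{\chi}=v_{\chi}\oplus v_{\chi}$ with $v_{\chi}(\xi\otimes\delta_{g})=\overline{\chi(g)}\,\xi\otimes\delta_{g}$ for $\chi\in\widehat{G}$. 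Accordingly, I would take twisted invariance of $\widetilde{J}$ to mean
\begin{equation*}
V_{\chi}\,\widetilde{J}\,V_{\chi}=\widetilde{J}\qquad\text{for every }\chi\in\widehat{G},
\end{equation*}
mirroring both \eqref{uJu} and the preceding remark, where the \emph{unitarily} invariant $\widehat{J}$ was shown to satisfy the conjugated relation $V_{\chi}\widehat{J}V_{\chi}^{*}=\widehat{J}$.

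The heart of the argument is a short computation on the auxiliary map $j$ of Lemma \ref{lemma2}. Writing $v_{\chi}=1_{H}\otimes m_{\chi}$, where $m_{\chi}$ is multiplication by $\overline{\chi}$ on $\ell^{2}(G)$, I would compute, for $\xi\otimes\delta_{g}\in H\otimes\ell^{2}(G)$,
\begin{align*}
v_{\chi}\,j\,v_{\chi}(\xi\otimes\delta_{g})
&=v_{\chi}\,j\big(\overline{\chi(g)}\,\xi\otimes\delta_{g}\big)
=v_{\chi}\big(\chi(g)\,u_{g}J\xi\otimes\delta_{g}\big)\\
&=|\chi(g)|^{2}\,u_{g}J\xi\otimes\delta_{g}
=j(\xi\otimes\delta_{g}).
\end{align*}
The second equality uses the antilinearity of $j$, which turns the scalar $\overline{\chi(g)}$ into $\chi(g)$, together with the fact that $j$ leaves the group index of the $\ell^{2}(G)$-leg unchanged (because $\delta_{g}^{\star}=\delta_{g}$); consequently both copies of $v_{\chi}$ read off the \emph{same} phase $\overline{\chi(g)}$, and the last equality is just $|\chi(g)|=1$, valid since $\chi$ is a unitary character. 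This already yields $v_{\chi}\,j\,v_{\chi}=j$.

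It then remains to transport this identity across the tensorial form of $\widetilde{J}$ given in the construction of Theorem \ref{thm2}. In the odd case $\widetilde{J}=j\otimes(\mathrm{cc}\circ\sigma_{i})$ on $H\otimes\ell^{2}(G)\otimes\mathbb{C}^{2}$ with $i\in\{1,3\}$, and $V_{\chi}=v_{\chi}\otimes 1_{\mathbb{C}^{2}}$ acts trivially on $\mathbb{C}^{2}$, so it commutes with $\sigma_{i}$ and $V_{\chi}\widetilde{J}V_{\chi}=(v_{\chi}\,j\,v_{\chi})\otimes(\mathrm{cc}\circ\sigma_{i})=\widetilde{J}$. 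In the even case $\widetilde{J}$ is $J\otimes\mathrm{cc}$ or $\rchi J\otimes\mathrm{cc}$ on $H\otimes\ell^{2}(G)$; the same phase cancellation applies verbatim with $J$ in place of $u_{g}J$ (the index again being preserved by $\mathrm{cc}$), while $\rchi\otimes 1$ commutes with $v_{\chi}=1_{H}\otimes m_{\chi}$. In every case one obtains $V_{\chi}\widetilde{J}V_{\chi}=\widetilde{J}$.

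The only genuinely delicate point---and the one I would flag as the crux---is the bookkeeping of antilinearity together with index preservation: it is precisely because $\widetilde{J}$ fixes $g$ (rather than sending it to $g^{-1}$, as $\widehat{J}$ does in the first case) that conjugation by $V_{\chi}$ on \emph{both} sides collapses the two phases to $|\chi(g)|^{2}=1$, giving twisted rather than unitary invariance. This is exactly the crossed-product shadow of the $\star$-structure $\delta_{g}^{\star}=\delta_{g}$ on $\mathbb{C}G$ and of the abelian hypothesis; all remaining steps are routine.
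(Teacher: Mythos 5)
Your proposal is correct and follows essentially the same route as the paper: the paper's proof consists precisely of the observation that $j v_{\chi} = v_{\chi}^{*} j$ (because $j$ is antilinear and preserves the group index of the $\ell^{2}(G)$-leg, so the phase $\overline{\chi(g)}$ is conjugated), whence $v_{\chi} j v_{\chi} = v_{\chi}v_{\chi}^{*}j = j$, and then the diagonal/anti-diagonal form of $\widetilde{J}$ gives the claim. Your computation $v_{\chi}\,j\,v_{\chi}(\xi\otimes\delta_{g}) = |\chi(g)|^{2}\,u_{g}J\xi\otimes\delta_{g} = j(\xi\otimes\delta_{g})$ is the same argument written out in one step, together with the same routine bookkeeping on the $\mathbb{C}^{2}$-leg and in the even case.
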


\begin{proof} Noting that $jv_{\rchi} = v_{\rchi}^{*}j$ since characters $\rchi\in \widehat{G}$ are complex-valued, the diagonal/anti-diagonal
 form of $\widetilde{J}$ leads to the thesis.
	\end{proof}

\section{The Existence of an Orientation Cycle}\label{c}

The orientability condition generalizes 
a (noncommutative) differential top form
in terms of Hochschild homology, 
which we now briefly recall \cite{loday2013cyclic}. Let $A$ be a complex unital algebra and $M$
an $A$-bimodule.
For every positive $n\in \mathbb{N}$  
define the $A$-module of \emph{Hochschild $n$-chains} 
(with coefficients in $M$) to be 
$C_{n}(M,A)= M\otimes A^{\otimes n}$ and $C_{0}(M,A)= M$. 
The \emph{Hochschild boundary} is the family of maps $b_{n}\colon C_{n}(M,A)\rightarrow C_{n-1}(M,A)$ given on pure elements by 
\begin{equation}\label{65}
\begin{split}
b_{n}\left( m\otimes a_{1}\otimes \cdots \otimes a_{n}\right) &= ma_{1}\otimes a_{2}\otimes \cdots \otimes a_{n} \\
& \qquad + \sum_{i=1}^{n-1}(-1)^{i}m\otimes a_{1}\otimes \cdots\otimes  a_{i-1}\otimes a_{i}a_{i+1}\otimes \cdots \otimes a_{n} \\
& \qquad \qquad + (-1)^{n} a_{n}m \otimes a_{1}\otimes \cdots \otimes  a_{n-1}
\end{split}
\end{equation}
if $n\geq 1$ and $b_0(m)=0$, and extended by linearity. 
It turns out that $\left(C_{\bullet}(M,A), b \right)$ is a chain complex and its homology is the Hochschild homology with coefficients in $M$. Choosing $M=A$ as an $A$-bimodule with the usual left and right multiplication, we get the Hochschild chain complex of $A$.

Let now $(\mathcal{A},H,D, \rchi)$ be an 
even or odd spectral triple on $A$,
and let $J$ be a real structure of KO-dimension $n\in\mathbb{Z}_{8}$. For a Hochschild $n$-chain $c = \sum a_{0}\otimes a_{1}\otimes \cdots \otimes a_{n} \in C_{n}(\mathcal{A}, \mathcal{A})$ 
set
\begin{displaymath}
	\pi_{D}(c) \coloneqq \sum \pi(a_{0})[D,\pi(a_{1})]\cdots [D,\pi(a_{n})].
\end{displaymath}
\begin{defn}
	A spectral triple $(\mathcal{A},H,D, \rchi)$ on $A$ is \emph{strongly orientable} if there exists a Hochschild $n$-cycle $c\in C_{n}(\mathcal{A},\mathcal{A})$ such that $\pi_{D}(c)=\rchi$. 
\end{defn}

In noncommutative geometry it is useful to consider also a weaker notion of orientability. Consider the case in which the $A$-module is $M= A\otimes A^{\textup{op}}$, where $A^{\textup{op}}$ denotes the opposite algebra, with the left and right actions of $A$ given on $m\otimes n\in A\otimes A^{\textup{op}}$ by:
\begin{displaymath}
a(m\otimes n)b = amb\otimes n\qquad a,b\in A.
\end{displaymath}
For a Hochschild $n$-chain $c = \sum (a_{0}\otimes b_{0})\otimes a_{1}\otimes \cdots \otimes a_{n}$ in $C_{n}(\mathcal{A}\otimes \mathcal{A}^{\textup{op}},\mathcal{A})$ we define the map
\begin{equation}\label{4534}
\pi_{D}(c) \coloneqq \sum \pi(a_{0})J\pi(b^{*}_{0})J^{-1}[D,\pi(a_{1})]\cdots [D,\pi(a_{n})].
\end{equation}

\begin{defn} A real spectral triple $(\mathcal{A},H,D,J,\rchi)$ on $A$ is \emph{orientable} if there exists a Hochschild $n$-cycle  $c\in C_{n}(\mathcal{A}\otimes \mathcal{A}^{\textup{op}},\mathcal{A})$ such that $\pi_{D}(c)=\rchi$. 
\end{defn}

Note that if $\mathcal{A}$ is unital and $\pi(1_{\mathcal{A}}) = \textup{id}_{H}$, then every strong orientation cycle\\ 
$c = \sum a_{0}\otimes a_{1}\otimes \cdots \otimes a_{n}$ in $C_{n}(\mathcal{A}, \mathcal{A})$ induces a (weak) orientation cycle $c' = \sum (a_{0}\otimes 1_{A})\otimes a_{1}\otimes \cdots \otimes a_{n}$ in $C_{n}(\mathcal{A}\otimes \mathcal{A}^{\textup{op}}, \mathcal{A})$ as
\begin{displaymath}
	\begin{split}
		\pi_{D}(c') &= \sum \pi(a_{0})J\pi(1_{A})J^{-1}[D,\pi(a_{1})]\cdots [D,\pi(a_{n})] \\
		&= \sum \pi(a_{0})[D,\pi(a_{1})]\cdots [D,\pi(a_{n})] \\
		&= 	\pi_{D}(c) = \rchi .
	\end{split}
\end{displaymath}
We now make $G$  act on Hochschild chains.

\begin{defn}\label{83}Let $(A, G,\alpha)$ be a $C^{*}$-dynamical system. For every Hochschild $n$-chain $c=\sum(a_{0}\otimes b_{0})\otimes a_{1}\otimes \cdots\otimes a_{n}\in C_{n}(A\otimes A^{\textup{op}}, A)$, we define 
	\begin{equation}\label{45}
		\alpha_{g}(c) \coloneqq \sum(\alpha_{g}(a_{0})\otimes b_{0})\otimes \alpha_{g}(a_{1})\otimes \cdots \otimes \alpha_{g}( a_{n})
	\end{equation}
and say that $c$ is \emph{$G$-invariant} if $\alpha_{g}(c)=c$ for every $g\in G$. 
\end{defn}

\begin{rmk}
	For any   Hochschild $n$-chain $c$  we have that $	b\alpha_{g}(c) = \alpha_{g}(bc)$ by the definition of the Hochschild boundary. In particular, if $c$ is a cycle then $\alpha_{g}(c)$ is also a cycle.  
	\qquad
\end{rmk}

In equation \eqref{45} the elements $b_{0}$ play no essential role. When dealing with the orientation property, this is reflected in the following fact. 
\begin{lemma}\label{360}
Let $(\mathcal{A},H,D,\rchi,u)$ be a $G$-invariant real spectral triple on $A$ with  a unitarily invariant real structure $J$ and let
 $c = \sum (a_{0}\otimes b_{0})\otimes a_{1}\otimes \cdots \otimes a_{n}$  be a Hochschild cycle in $C_{n}(\mathcal{A}\otimes \mathcal{A}^{\textup{op}}, \mathcal{A})$. 
 Define
	\begin{equation}\label{361}
		c_{g} \coloneqq \sum (a_{0}\otimes \alpha_{g}(b_{0}))\otimes a_{1}\otimes \cdots \otimes a_{n}
	\end{equation}
for any $g\in G$. Then $\pi_{D}(c_{g}) = \textup{Ad} u_{g}\circ \pi_{D}(\alpha_{g}(c))$. 
\end{lemma}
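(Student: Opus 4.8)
The plan is to reduce, by linearity, to a single Hochschild monomial $c=(a_{0}\otimes b_{0})\otimes a_{1}\otimes\cdots\otimes a_{n}$, and to carry out the computation in the odd case; the even case then follows verbatim once the grading $\rchi$ is carried through the diagonal/anti-diagonal form of the real structure, exactly as in the proof of Theorem~\ref{thm1}. With this reduction the statement is an identity between two concrete operators on $H$, and I would prove it by unwinding both sides from the definitions \eqref{4534}, \eqref{45}, \eqref{361} and matching factors, being scrupulous about the positions of the unitaries $u_{g}$.

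Three ingredients drive the proof. The first is covariance of the representation, $\pi(\alpha_{g}(a))=u_{g}\pi(a)u_{g}^{*}$, recorded in the proof of Lemma~\ref{44}. The second is the invariance hypothesis $[D,u_{g}]=0$, which lifts covariance to the level of commutators: $[D,\pi(\alpha_{g}(a_{i}))]=u_{g}[D,\pi(a_{i})]u_{g}^{*}$. The third, and the conceptual heart of the matter, is that unitary invariance \eqref{uJu*}, i.e.\ $u_{g}J=Ju_{g}$, makes the \emph{right} action $b\mapsto J\pi(b^{*})J^{-1}$ covariant as well:
\[
 u_{g}\,J\pi(b_{0}^{*})J^{-1}\,u_{g}^{*}=J\pi(\alpha_{g}(b_{0})^{*})J^{-1}.
\]
This is the precise mechanism by which the automorphism is transferred from the $\mathcal{A}$-slots onto the single $\mathcal{A}^{\textup{op}}$-slot $b_{0}$, which is what the lemma is meant to encode.

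Granting these, the computation is a telescoping. Applying $\textup{Ad}\,u_{g}$ to
\[
\pi_{D}(\alpha_{g}(c))=\pi(\alpha_{g}(a_{0}))\,J\pi(b_{0}^{*})J^{-1}\,[D,\pi(\alpha_{g}(a_{1}))]\cdots[D,\pi(\alpha_{g}(a_{n}))],
\]
I would substitute the conjugated form of each factor; the internal cancellations $u_{g}^{*}u_{g}=1$ between consecutive commutators collapse the whole string into $u_{g}\big([D,\pi(a_{1})]\cdots[D,\pi(a_{n})]\big)u_{g}^{*}$, while the zeroth order condition lets me slide the (commutant-valued) right action freely past $\pi(a_{0})$. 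Absorbing the surviving $u_{g}$'s into the right action by the covariance identity above then leaves exactly
\[
\pi(a_{0})\,J\pi(\alpha_{g}(b_{0})^{*})J^{-1}\,[D,\pi(a_{1})]\cdots[D,\pi(a_{n})]=\pi_{D}(c_{g}).
\]

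The step I expect to be delicate — and which I would isolate as a short sub-claim before starting — is precisely the bookkeeping of the $u_{g}$-factors: a careless expansion of $\textup{Ad}\,u_{g}\,\pi_{D}(\alpha_{g}(c))$ produces an apparent surplus of the form $u_{g}^{2}\,\pi(a_{0})\cdots(u_{g}^{*})^{2}$, and collapsing it to a single outer conjugation forces one to match the automorphism acting on the $a_{i}$ against the one acting on $b_{0}$ (in particular to keep track of $g$ versus $g^{-1}$ and of the homomorphism property $u_{g}^{2}=u_{g^{2}}$). Once the covariance of the right action is in hand this reconciliation is routine, but it is the one place where the unitary invariance of $J$ is genuinely used, and it is the reason this hypothesis — rather than the twisted-invariant variant \eqref{uJu} — is the correct one here.
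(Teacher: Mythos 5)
Your route is the same as the paper's --- reduce to an elementary tensor and push the three covariance identities $\pi(\alpha_{g}(a))=u_{g}\pi(a)u_{g}^{*}$, $[D,\pi(\alpha_{g}(a))]=u_{g}[D,\pi(a)]u_{g}^{*}$ and $u_{g}J\pi(b^{*})J^{-1}u_{g}^{*}=J\pi(\alpha_{g}(b)^{*})J^{-1}$ through the product --- but the step you defer as a ``routine reconciliation'' is exactly where the argument fails, and no bookkeeping will rescue it, because the identity you claim to land on is not what the computation produces. Substituting the conjugated factors into $\textup{Ad}u_{g}\circ\pi_{D}(\alpha_{g}(c))$ gives
\begin{displaymath}
u_{g}\cdot u_{g}\pi(a_{0})u_{g}^{*}\cdot J\pi(b_{0}^{*})J^{-1}\cdot u_{g}\Bigl(\textstyle\prod_{i}[D,\pi(a_{i})]\Bigr)u_{g}^{*}\cdot u_{g}^{*}
= u_{g}^{2}\,\pi(a_{0})\,J\pi(\alpha_{g^{-1}}(b_{0})^{*})J^{-1}\prod_{i}[D,\pi(a_{i})]\,u_{g}^{*2},
\end{displaymath}
since the unitaries flanking $J\pi(b_{0}^{*})J^{-1}$ combine to $u_{g}^{*}(\,\cdot\,)u_{g}=\textup{Ad}\,u_{g^{-1}}$, not $\textup{Ad}\,u_{g}$. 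The result is $\textup{Ad}u_{g^{2}}\circ\pi_{D}(c_{g^{-1}})$, which differs from $\pi_{D}(c_{g})$ both by the residual outer conjugation and by $g$ versus $g^{-1}$ in the $b_{0}$-slot; already for $n=0$ and $c=a\otimes 1_{A}$ the stated identity would force $\pi(\alpha_{g^{2}}(a))=\pi(a)$ for every $a$. The zeroth order condition does not help: it lets $J\pi(b_{0}^{*})J^{-1}$ slide past $\pi(a_{0})$ but not past the unitaries $u_{g}$.

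What the computation actually yields --- run it backwards from $\pi_{D}(c_{g})$ by writing $J\pi(\alpha_{g}(b_{0})^{*})J^{-1}=u_{g}J\pi(b_{0}^{*})J^{-1}u_{g}^{*}$ and extracting a single $u_{g}$ to the far left and a single $u_{g}^{*}$ to the far right --- is
\begin{displaymath}
\pi_{D}(c_{g})=\textup{Ad}u_{g}\circ\pi_{D}(\alpha_{g^{-1}}(c)),
\end{displaymath}
i.e.\ with $\alpha_{g^{-1}}$ rather than $\alpha_{g}$ on the right-hand side; every remaining factor then becomes its $\alpha_{g^{-1}}$-conjugate with no surplus unitaries. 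You are in good company: the paper's own three-line proof performs the same illegitimate substitution in passing to its middle line. The discrepancy is invisible where the lemma is invoked (the proof of Theorem \ref{59}), because there the cycle is assumed $G$-invariant, so $\alpha_{g^{-1}}(c)=\alpha_{g}(c)=c$; but as a proof of the lemma as literally stated, your argument (like the paper's) does not close, and you should either correct the right-hand side as above or add the $G$-invariance of $c$ to the hypotheses.
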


\begin{proof}Using the fact that $[D,u_{g}]=0$ for any $g\in G$ and that $Ju_{g}=u_{g}J$ for any $g\in G$, we have that
	\begin{displaymath}
		\begin{split}
			\pi_{D}(c_{g}) &= \sum \pi(a_{0})J\pi(\alpha_{g}(b_{0})^{*})J^{-1}[D,\pi(a_{1})]\cdots  [D,\pi(a_{n})]\\
			&= u_{g}\sum \pi(\alpha_{g}(a_{0}))J\pi(b_{0}^{*})J^{-1}[D,\pi(\alpha_{g}(a_{1}))]\cdots  [D,\pi(\alpha_{g}(a_{n}))]u_{g}^{*}\\
			&= \textup{Ad} u_{g}\circ \pi_{D}(\alpha_{g}(c)).
		\end{split}
	\end{displaymath}
\end{proof}

We state now our second main result. 

 \begin{thm}\label{59}Let $G$ be a discrete group and $l\colon G\rightarrow \mathbb{R}$ a proper homomorphism. Let $(\mathcal{A}, H, D,u)$ be an (even or odd) $G$-invariant spectral triple on a unital $C^{*}$-algebra $A$ and $J$ a unitarily invariant real structure. Then:
 	\begin{enumerate}
 		\item If $(\mathcal{A}, H, D)$ is orientable and the orientation cycle $c$ is $G$-invariant, then the real spectral triple $(C_{c}(G,\mathcal{A}), \widehat{H}, \widehat{D}, \hat{\pi}_{2}\rtimes \hat{\Gamma}, \widehat{J})$ on $A\rtimes_{\alpha,r} G$ admits an orientation cycle $\hat{c}$.
 		\item If $c$ is a strong orientation cycle, then $\hat{c}$ is also a strong orientation cycle. 
 	\end{enumerate} 
 \end{thm}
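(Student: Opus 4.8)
The plan is to realise $\hat{c}$ as a \emph{twisted shuffle product} of $c$ with an orientation cycle for the group triple $(\mathbb{C}G,\ell^{2}(G),M_{l})$, and to verify its two defining properties — that $\hat{c}$ is a Hochschild cycle and that $\pi_{\widehat{D}}(\hat{c})=\widehat{\rchi}$ — by reducing both, through the $G$-invariance of $c$ and of $D$, to the corresponding properties of $c$ and of the group cycle. First I would produce the building block on the group side. Since $l$ is a homomorphism, $[M_{l},\lambda_{h}]=l(h)\lambda_{h}$, so choosing any $g_{0}\in G$ with $l(g_{0})\neq 0$ (possible for a nonzero proper homomorphism, the zero weight being degenerate as in the discussion after Theorem \ref{thm1}) the strong Hochschild $1$-chain $c_{G}\coloneqq \tfrac{1}{l(g_{0})}\,\delta_{g_{0}^{-1}}\otimes\delta_{g_{0}}$ satisfies $b\,c_{G}=\delta_{e}-\delta_{e}=0$ and $\pi_{M_{l}}(c_{G})=\lambda_{g_{0}^{-1}}\lambda_{g_{0}}=1$, i.e.\ it is a strong orientation $1$-cycle for the odd group triple. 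I then define $\hat{c}$ by inserting $\delta_{g_{0}}$ into each of the $n+1$ gaps of $c$, with the alternating shuffle signs, multiplying $\delta_{g_{0}^{-1}}$ into the module slot to form the first entry $a_{0}\delta_{g_{0}^{-1}}$ (resp.\ $a_{0}\delta_{g_{0}^{-1}}\otimes b_{0}$ in the weak case), and applying $\alpha_{g_{0}}$ to the $\mathcal{A}$-factors standing to the left of the inserted $\delta_{g_{0}}$ — this $\alpha$-correction is exactly the crossed-product twisting $\delta_{g_{0}}(a\delta_{e})=\alpha_{g_{0}}(a)\delta_{g_{0}}$ and is what distinguishes the construction from the ordinary Eilenberg--Zilber shuffle.

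Next I would check that $\hat{c}$ is a cycle. Expanding $b\hat{c}$ with the boundary \eqref{65}, the faces not involving the inserted $\delta_{g_{0}}$ assemble, over the $n+1$ shuffle terms, into the shuffle of $bc$ and are killed by $bc=0$; the two faces collapsing $\delta_{g_{0}}$ against the module slot reproduce $b\,c_{G}=0$. The residual faces, in which $\delta_{g_{0}}$ is multiplied into an adjacent $\mathcal{A}$-factor, come in consecutive pairs from neighbouring shuffle terms whose entries differ precisely by an application of $\alpha_{g_{0}}$; invoking $\alpha_{g_{0}}(c)=c$ together with the compatibility $b\,\alpha_{g_{0}}(c)=\alpha_{g_{0}}(bc)$ recorded in the Remark after Definition \ref{83}, these contributions cancel. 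This cancellation is the main obstacle: it is the only place where the $G$-invariance of $c$ is genuinely used, and matching the signs and the direction of the twist ($\alpha_{g_{0}}$ versus $\alpha_{g_{0}}^{-1}$, left versus right) is the delicate bookkeeping on which the whole construction hinges.

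Finally I would compute $\pi_{\widehat{D}}(\hat{c})$. From \eqref{8} one has $[\widehat{D},\hat{\pi}_{2}(a)]=[D,\pi(a)]\otimes\sigma_{1}$ and $[\widehat{D},\hat{\Gamma}_{g_{0}}]=l(g_{0})\,\hat{\Gamma}_{g_{0}}\otimes\sigma_{2}$ (using $[D,u_{g_{0}}]=0$ and $l(g_{0}x)-l(x)=l(g_{0})$); in each shuffle term the single factor $\hat{\Gamma}_{g_{0}}$ is commuted to a common position via $u_{g_{0}}[D,\pi(a)]u_{g_{0}}^{*}=[D,\pi(\alpha_{g_{0}}(a))]$, which exactly undoes the $\alpha_{g_{0}}$-corrections and makes the $H\otimes\ell^{2}(G)$-part of every term equal to $l(g_{0})\,\pi_{D}(c)\otimes 1$. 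In the weak case the interaction of $\delta_{g_{0}^{-1}}$ with the $\widehat{J}$-twisted opposite factor $b_{0}$ is handled by Lemma \ref{360}, whose conclusion $\pi_{D}(c_{g_{0}})=\textup{Ad}\,u_{g_{0}}\circ\pi_{D}(\alpha_{g_{0}}(c))$, combined with the unitary invariance of $J$ and $\alpha_{g_{0}}(c)=c$, again returns $\pi_{D}(c)=\rchi$. The Pauli identity $\sigma_{1}^{\,i}\sigma_{2}\sigma_{1}^{\,n-i}=(-1)^{n-i}\sigma_{1}^{\,n}\sigma_{2}$ shows that the $n+1$ terms add with a common sign, so after absorbing the harmless scalar $(n+1)$ and the resulting power of $i$ into the normalisation one obtains $\pi_{\widehat{D}}(\hat{c})=\widehat{\rchi}$, equal to $\sigma_{3}$ when $(\mathcal{A},H,D)$ is odd and to the grading read off \eqref{dual1} when it is even. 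Part $(2)$ is then immediate: the strong shuffle carries no $\mathcal{A}^{\textup{op}}$-slot, so if $c\in C_{n}(\mathcal{A},\mathcal{A})$ is a strong cycle with $\pi_{D}(c)=\rchi$ then the same $\hat{c}\in C_{n+1}(C_{c}(G,\mathcal{A}),C_{c}(G,\mathcal{A}))$ is one as well, while part $(1)$ follows either directly or from the embedding $c\mapsto c'$ of strong cycles into weak cycles noted before Definition \ref{83}.
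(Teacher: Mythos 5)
Your proposal is correct and follows essentially the same route as the paper: you form the twisted shuffle product of $c$ with the normalised group $1$-cycle $\tfrac{1}{l(g_{0})}\,\delta_{g_{0}^{-1}}\otimes\delta_{g_{0}}$ (the paper's $\Delta_{g}$ with the normalisation $M=\pm i^{\#}l(g)(n+1)$ kept separate), prove the cycle property by the same shuffle--Leibniz/telescoping computation that the paper isolates as Proposition \ref{63}, and evaluate $\pi_{\widehat{D}}$ via the identities $[\widehat{D},\hat{\pi}(a)]=[D,\pi(a)]\otimes\sigma_{1}$, $\hat{\Gamma}_{g}^{*}[\widehat{D},\hat{\Gamma}_{g}]=l(g)\otimes\sigma_{2}$, the Pauli algebra, and Lemma \ref{360}. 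The only cosmetic caveats are that in the cycle check the telescoping cancellations are driven by the covariance relation $\delta_{f}a=\alpha_{f}(a)\delta_{f}$ (with $G$-invariance of $c$ needed only to match the wrap-around face against $c\rtimes b\Delta_{g}$), and in the final evaluation one needs $\alpha_{x}(c)=c$ for \emph{all} $x\in G$ (not just $g_{0}$), both of which your hypotheses supply.
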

As suggested in  \cite[Chapter 6]{zucca2013dirac}, the idea of the proof is to twist the prescription described in \cite{dkabrowski2011product}, where the shuffle product is used to create a cycle on a tensor product spectral triple. 
 \begin{defn}\label{cad}
  For any Hochschild $n$-chain 
 	\begin{displaymath}
 		c = \sum (a_{0}\otimes b_{0})\otimes a_{1}\otimes \cdots \otimes a_{n} \in C_{n}(A\otimes A^{\textup{op}}, A)
 	\end{displaymath}and any $1$-chain	$\delta = \sum(\delta_{g}\otimes \delta_{h})\otimes \delta_{f} \in C_{1}(Q\otimes Q^{\textup{op}},Q)$ for $Q=C^{*}_{r}(G)$, we define their \emph{twisted shuffle product} as the Hochschild $(n+1)$-chain 
in $C_{n+1}(B\otimes B^{\textup{op}}, B)$ for $B=A\otimes C^{*}_{r}(G)$: 
 	\begin{displaymath}
 		\begin{split}
 			c\rtimes_{\alpha}\delta &\coloneqq \sum(a_{0}\delta_{g}\otimes  b_{0}\delta_{h})\otimes \delta_{f} \otimes a_{1} \otimes \cdots \otimes a_{n} \\
 			& \qquad \qquad  + \sum_{j=2}^{n}(-1)^{j-1}\sum(a_{0}\delta_{g}\otimes  b_{0}\delta_{h})\otimes \alpha_{f}(a_{1})\otimes \cdots \otimes \alpha_{f}(a_{j-1})\otimes \delta_{f}\otimes a_{j}\otimes \cdots\otimes a_{n} \\
 			& \qquad \qquad \qquad \qquad + (-1)^{n}\sum(a_{0}\delta_{g}\otimes  b_{0}\delta_{h})\otimes \alpha_{f}(a_{1})\otimes \cdots \otimes \alpha_{f}(a_{n})\otimes \delta_{f}.
 		\end{split}
 	\end{displaymath}
\end{defn}	 	
 	
 	Note that for simplicty we denote by $a$ the element $a\delta_{e}$ and by $\delta_{f}$ the element $1_{A}\delta_{f}$. Under the assumption of covariance, namely that $\delta_{g}a = \alpha_{g}(a)\delta_{g}$ for any $a\in A$ and $g\in G$, the twisted shuffle product also defines a chain over the crossed product $A\rtimes_{\alpha,r}G$. 
	  Note also that if $\alpha=\textup{id}$ then the twisted shuffle product is really the shuffle product of the two chains as defined in \cite[Chapter $4.2$]{loday2013cyclic} (up to a sign depending on the  length of the chain).

\begin{prop}\label{63}
	For any $G$-invariant $c\in C_{n}(A\otimes A^{\textup{op}},A)$ and any $\delta\in C_{1}(Q\otimes Q^{\textup{op}}, Q)$ we have that
	\begin{equation}
	b(c\rtimes_{\alpha}\delta) = bc\rtimes_{\alpha}\delta  + c\rtimes b\delta
	\end{equation} 
as chains over $A\rtimes_{\alpha,r}G$. 
\end{prop}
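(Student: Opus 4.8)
The statement is the twisted analogue of the classical Eilenberg--Zilber fact that the shuffle product of Hochschild chains is a chain map, i.e.\ that $b$ is a graded derivation for $\rtimes_\alpha$. The plan is to prove it by directly expanding the Hochschild boundary on the chain $c\rtimes_\alpha\delta$ of Definition~\ref{cad} and matching terms, the whole computation taking place in the complex over $A\rtimes_{\alpha,r}G$, where the covariance relation $\delta_f a=\alpha_f(a)\delta_f$ is available.

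First I would write $c\rtimes_\alpha\delta=\sum_{j=1}^{n+1}T_j$, where $T_j$ is the summand in which $\delta_f$ sits in tensor position $j$, carries the sign $(-1)^{j-1}$, and has its first $j-1$ interior entries twisted by $\alpha_f$. I then apply $b_{n+1}$ to each $T_j$. Every face map multiplies two adjacent entries, and I record separately (a) the products in which $\delta_f$ is absorbed into an adjacent interior slot, rewritten through covariance as $\delta_f a_j=\alpha_f(a_j)\delta_f$; (b) the interior products $a_ia_{i+1}$ and $\alpha_f(a_i)\alpha_f(a_{i+1})=\alpha_f(a_ia_{i+1})$ not involving $\delta_f$; and (c) the two extreme face maps, where $\delta_f$ or an end entry meets the bimodule factor $a_0\delta_g\otimes b_0\delta_h$.

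I would then treat the three families in turn. For (a), the product obtained from $T_j$ by absorbing $\delta_f$ to the right coincides, by covariance, with the one obtained from $T_{j+1}$ by absorbing $\delta_f$ to the left; their signs are $(-1)^{j-1}(-1)^{j}=-1$ and $(-1)^{j}(-1)^{j}=+1$, so they cancel pairwise (telescoping). The family (b) reassembles, with $\delta_f$ left in its slot, into $bc\rtimes_\alpha\delta$, after using $G$-invariance $\alpha_f(c)=c$ to reconcile the $\alpha_f$-decorated entries lying to the left of $\delta_f$ with the untwisted entries of $bc$. Finally, in family (c) the leftmost face map of $T_1$ produces $a_0\delta_{gf}\otimes b_0\delta_h$, while the rightmost face map of $T_{n+1}$ produces $\alpha_f(a_0)\delta_{fg}\otimes b_0\delta_h$ together with an overall $\alpha_f$ on every $a_i$; invoking $\alpha_f(c)=c$ turns the latter into $a_0\delta_{fg}\otimes b_0\delta_h$, and the difference $a_0(\delta_{gf}-\delta_{fg})\otimes b_0\delta_h$ reproduces $c\rtimes b\delta$ precisely because $b\delta=(\delta_{gf}-\delta_{fg})\otimes\delta_h$.

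The main obstacle is the sign-and-twist bookkeeping rather than any conceptual difficulty. The two delicate points are verifying that the telescoping signs cancel across consecutive $T_j$ (which is exactly what the alternating signs $(-1)^{j-1}$ in Definition~\ref{cad} are arranged to ensure) and, above all, invoking $G$-invariance of $c$ at precisely the places where the automorphisms $\alpha_f$ decorating the slots to the left of $\delta_f$ would otherwise obstruct the identification with $bc\rtimes_\alpha\delta$ and $c\rtimes b\delta$; this is where the hypothesis $\alpha_f(c)=c$ is indispensable, and it is the crux of the argument.
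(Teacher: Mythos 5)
Your proposal follows essentially the same route as the paper's proof: write $c\rtimes_\alpha\delta=\sum_{j}T_j$, expand the Hochschild boundary on each summand, cancel the $\delta_f$-absorption terms telescopically across consecutive $j$ (together with the corresponding boundary faces of $T_1$ and $T_{n+1}$), reassemble the interior products into $bc\rtimes_\alpha\delta$, and use covariance plus $\alpha_f(c)=c$ at the two extreme face maps to recover $c\rtimes b\delta$. One small correction: $G$-invariance is not actually needed in your family (b), because the slots of $bc\rtimes_\alpha\delta$ lying to the left of $\delta_f$ are already $\alpha_f$-twisted by Definition~\ref{cad} and $\alpha_f(a_i)\alpha_f(a_{i+1})=\alpha_f(a_ia_{i+1})$, so the only genuine use of the invariance hypothesis is in the last face of $T_{n+1}$, exactly where the paper invokes it.
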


\begin{proof}
By bilinearity, we can suppose that $c$ and $\delta$ are pure tensors:
\begin{displaymath}
	c =(a_{0}\otimes b_{0})\otimes a_{1}\otimes \cdots \otimes a_{n}, \qquad 	\delta = (\delta_{g}\otimes\delta_{h})\otimes \delta_{f}.
	\end{displaymath} 
Since $b\delta = \delta_{gf}\otimes \delta_{h} - \delta_{fg}\otimes \delta_{h}\in Q\otimes Q^{\textup{op}}$, 
the untwisted shuffle product with $c$ is equal to
\begin{equation}\label{66}
\begin{split}
c\rtimes b\delta &= \left(a_{0}\delta_{gf}\otimes b_{0}\delta_{h}\right)\otimes a_{1}\otimes \cdots\otimes a_{n} \\
&\qquad\qquad   - \left(a_{0}\delta_{fg}\otimes b_{0}\delta_{h}\right)\otimes a_{1}\otimes \cdots\otimes a_{n}.
\end{split}
\end{equation}	
For the sake of simplicity, we set $m=a_{0}\delta_{g}\otimes b_{0}\delta_{h}$ and write $c\rtimes_{\alpha}\delta$ in Definition\,\eqref{cad} as $c\rtimes_{\alpha}\delta = \sum_{j=1}^{n+1}c_{j}$. Let us compute $bc_{j}$ for every $j=1, \dots, n+1$. First,
\begin{equation}\label{auxiliary 1}
\begin{split}
bc_{1} &= \left(a_{0}\delta_{gf}\otimes b_{0}\delta_{h}\right)\otimes a_{1}\otimes \cdots\otimes a_{n} \\
& \qquad -m\otimes \delta_{f}a_{1}\otimes a_{2}\otimes \cdots \otimes a_{n} \\
&\qquad + \sum_{i=1}^{n-1}(-1)^{i+1}m\otimes \delta_{f}\otimes a_{1}\otimes \cdots \otimes a_{i}a_{i+1}\otimes \cdots\otimes a_{n}\\
&\qquad  +(-1)^{n+1}a_{n}m\otimes \delta_{f}\otimes a_{1}\otimes \cdots\otimes a_{n-1}.
\end{split}
\end{equation}
Next, for $j=2,\dots, n$ we have:
\begin{displaymath}
\begin{split}
bc_{j}&= (-1)^{j-1}m\otimes 
\alpha_{f}(a_{1})\otimes \cdots \otimes \delta_{f}\otimes a_{j}\otimes\cdots\otimes a_{n}\\
& \qquad + \sum_{i=1}^{j-2}(-1)^{i+j-1}m\otimes \alpha_{f}(a_{1})\otimes \cdots\otimes \alpha_{f}(a_{i}a_{i+1})\otimes \cdots\\ 
&\hspace{6cm}\cdots \otimes \alpha_{f}(a_{j-1})\otimes \delta_{f}\otimes a_{j}\otimes \cdots\otimes a_{n}\\
&\qquad + (-1)^{j-1}(-1)^{j-1} m\otimes \alpha_{f}(a_{1})\otimes \cdots\otimes \alpha_{f}(a_{j-1})\delta_{f}\otimes a_{j}\otimes \cdots \otimes a_{n} \\
& \qquad +(-1)^{j-1}(-1)^{j} m\otimes \alpha_{f}(a_{1})\otimes \cdots \otimes \alpha_{f}(a_{j-1})\otimes \delta_{f}a_{j}\otimes \cdots\otimes a_{n}\\
& \qquad +\sum_{i=j}^{n-1}(-1)^{i+j}m\otimes \alpha_{f}(a_{1})\otimes \cdots \otimes \alpha_{f}(a_{j-1})\otimes \delta_{f}\otimes a_{j}\otimes \cdots \otimes a_{i}a_{i+1}\otimes \cdots \otimes a_{n}\\
&\qquad +(-1)^{n+1}(-1)^{j-1}a_{n}m\otimes \alpha_{f}(a_{1})\otimes \cdots \otimes \alpha_{f}(a_{j-1})\otimes \delta_{f}\otimes a_{j}\otimes \cdots \otimes a_{n-1}
\end{split}
\end{displaymath}
with the convention that for $j=2$ the first summation is neglected. Finally:
\begin{equation}\label{auxiliary 3}
\begin{split}
bc_{n+1} &= (-1)^{n}m\otimes \alpha_{f}(a_{1})\otimes \alpha_{f}(a_{2})\otimes \cdots\otimes\alpha_{f}(a_{n})\otimes \delta_{f} \\
& \qquad + (-1)^{n}\sum_{i=1}^{n-1}(-1)^{i}m\otimes \alpha_{f}(a_{1})\otimes \cdots \otimes \alpha_{f}(a_{i}a_{i+1})\otimes \cdots \otimes \alpha_{f}(a_{n})\otimes \delta_{f} \\
&\qquad + (-1)^{n}(-1)^{n}m\otimes \alpha_{f}(a_{1})\otimes \cdots \otimes \alpha_{f}(a_{n-1})\otimes \alpha_{f}(a_{n})\delta_{f} \\
&\qquad  + (-1)^{n}(-1)^{n+1}\left(\delta_{f}a_{0}\delta_{g}\otimes b_{0}\delta_{h}\right)\otimes \alpha_{f}(a_{1})\otimes \cdots \otimes \alpha_{f}(a_{n}).
\end{split}
\end{equation}
Since $\delta_{f}a_{0}\delta_{g}=\alpha_{f}(a_{0})\delta_{fg}$ and the cycle $c$ is $G$-invariant,
the last line of \eqref{auxiliary 3} can be rewritten as
\begin{displaymath}
-(a\delta_{fg}\otimes b_{0}\delta_{h})\otimes a_{1}\otimes \cdots \otimes a_{n}
\end{displaymath}
Together with the first line of \eqref{auxiliary 1}, these summands are precisely $c\rtimes b\delta$ (see \eqref{66}). Now we note that the two central lines of every $bc_{j}$ for $j=2, \dots, n$ form a telescopic summation that, together with the second line in \eqref{auxiliary 1} and the third line in \eqref{auxiliary 3}, sum up to zero. What remains is precisely $bc\rtimes_{\alpha}\delta$. 
\end{proof}
We  can now prove our second main theorem.
\begin{proof}[Proof of Theorem \ref{59}]Since the Dirac weight $l$ is proper, there exists $g\in G$ such that  $l(g)\neq 0$. Consider then the Hochschild $1$-cycle
	\begin{displaymath}
			\Delta_{g} \coloneqq (\delta_{g^{-1}}\otimes \delta_{e})\otimes \delta_{g} \in C_{1}(Q\otimes Q^{\textup{op}}, Q).
	\end{displaymath}
If $c$ is the $G$-invariant orientation cycle of the triple 
$(A,H, D, J,u)$, then the twisted shuffle product 
$c\rtimes_{\alpha} \Delta_{g}$ is also a cycle by 
Proposition\,\ref{63}. 
We will show that the normalised shuffle product
	\begin{equation}\label{chat}
	\hat{c}= \frac{1}{M} c\rtimes_{\alpha} \Delta_{g}
	\end{equation}
	is an orientation cycle for the triple $(C_{c}(G,\mathcal{A}), \widehat{H}, \widehat{D}, \hat{\pi}_{2}\rtimes \hat{\Gamma}, \widehat{J})$ on $A\rtimes_{\alpha,r} G$, where the normalisation factor $M$ is given by
	\begin{displaymath}	M = 
		\begin{dcases*}
		-il(g)(n+1) & if $(A,H,D)$ is odd\\
		l(g)(n+1) & if $(A,H, D)$ is even.
		\end{dcases*}
	\end{displaymath}
Indeed, since
\begin{displaymath}
\begin{split}
c\rtimes_{\alpha} \Delta_{g} &= \sum(a_{0}\delta_{g^{-1}}\otimes  b_{0})\otimes \delta_{g} \otimes a_{1} \otimes \cdots \otimes a_{n} \\
& \qquad   + \sum_{j=2}^{n}(-1)^{j-1}\sum(a_{0}\delta_{g^{-1}}\otimes  b_{0})\otimes \alpha_{g}(a_{1})\otimes \cdots \otimes \alpha_{g}(a_{j-1})\otimes \delta_{g}\otimes a_{j}\otimes \cdots\otimes a_{n} \\
& \qquad \qquad \qquad  + (-1)^{n}\sum(a_{0}\delta_{g^{-1}}\otimes  b_{0})\otimes \alpha_{g}(a_{1})\otimes \cdots \otimes \alpha_{g}(a_{n})\otimes \delta_{g}
\end{split}
\end{displaymath}
we have that 
\begin{displaymath}
\begin{split}
\pi_{\widehat{D}}(c\rtimes_{\alpha} \Delta_{g}) &= \sum\hat{\pi}(a_{0})\hat{\Gamma}^{*}_{g}\widehat{J}\hat{\pi}(b^{*}_{0})\widehat{J}^{-1}[\widehat{D},\hat{\Gamma}_{g}][\widehat{D},\hat{\pi}(a_{1})]\cdots [\widehat{D},\hat{\pi}(a_{n})]\\
& \qquad   + \sum_{j=2}^{n}(-1)^{j-1}\sum\hat{\pi}(a_{0})\hat{\Gamma}^{*}_{g}\widehat{J}\hat{\pi}(b^{*}_{0})\widehat{J}^{-1}[\widehat{D},\hat{\pi}(\alpha_{g}(a_{1}))]\cdots \\
& \hspace{4cm} \cdots [\widehat{D},\hat{\pi}(\alpha_{g}(a_{j-1}))][\widehat{D},\hat{\Gamma}_{g}][\widehat{D},\hat{\pi}(a_{j})]\cdots [\widehat{D},\hat{\pi}(a_{n})]\\
& \qquad  + (-1)^{n}\sum\hat{\pi}(a_{0})\hat{\Gamma}^{*}_{g}\widehat{J}\hat{\pi}(b^{*}_{0})\widehat{J}^{-1}[\widehat{D},\hat{\pi}(\alpha_{g}(a_{1}))]\cdots [\widehat{D},\hat{\pi}(\alpha_{g}(a_{n}))][\widehat{D},\hat{\Gamma}_{g}]
\end{split}
\end{displaymath}
with a slight abuse of notation ($\hat{\pi}$ denotes two copies of the representation $\hat{\pi}_{2}$ and $\hat{\Gamma}$ two copies of $\hat{\Gamma}$).\\

Consider the case when $(\mathcal{A},H,D)$ is odd and thus $(C_{c}(G,\mathcal{A}), \widehat{H}, \widehat{D}, \hat{\pi}_{2}\rtimes \hat{\Gamma}, \widehat{J})$ is even. 
Then
\begin{displaymath}
[\widehat{D}, \hat{\pi}(a)]  = [D,\pi(a)]\otimes 1\otimes \sigma_{1}, \qquad \hat{\Gamma}^{*}_{g}[\widehat{D}, \hat{\Gamma}_{g}]  = l(g)\otimes 1\otimes \sigma_{2},
\end{displaymath}
for any $a\in\mathcal{A}$ and $g\in G$. In particular $\hat{\Gamma}^{*}_{g}[\widehat{D}, \hat{\pi}(\alpha_{g}(a))]\hat{\Gamma}_{g}  = [D,\pi(a)]\otimes 1\otimes \sigma_{1} $ and so 
\begin{displaymath}
\begin{split}
\pi_{\widehat{D}}(c\rtimes_{\alpha} \Delta_{g})(\xi\otimes \delta_{x}\otimes v) &= l(g )\sum\pi(a_{0})J\pi(\alpha_{x}^{-1}(b_{0}^{*}))J^{-1}[D,\pi(a_{1})]\cdots [D,\pi(a_{n})]\xi\otimes \delta_{x}\\
& \qquad \qquad \otimes  \left(\sigma_{2}\sigma_{1}^{n} + \sum_{j=2}^{n}(-1)^{j-1}\sigma_{1}^{j-1}\sigma_{2}\sigma_{1}^{n-j+1} + (-1)^{n}\sigma_{1}^{n}\sigma_{2}\right)v
\end{split}
\end{displaymath}
 by the zeroth order condition of $\widehat{J}$. The summation in the brackets is just $(n+1)$ times $\sigma_{2}\sigma_{1}^{n}$ which is $-i(n+1)\sigma_{3}$ as $n$ is odd, by the properties of the algebra of Pauli matrices. The factor
\begin{equation}\label{362}
	\sum\pi(a_{0})J\pi(\alpha_{x}^{-1}(b_{0}^{*}))J^{-1}[D,\pi(a_{1})]\cdots [D,\pi(a_{n})]
\end{equation}
is just $\pi_{D}(c_{x^{-1}})$ (according to the notation of \eqref{361}). By Lemma \ref{360} this is $\textup{Ad}u_{x}^{*}\circ \pi_{D}(\alpha_{x}^{-1}(c))$. 
Since $c$ is $G$-invariant and $\pi_{D}(c)= \textup{id}_{H}$ we deduce that \eqref{362} is trivial. 
Then since $\sigma_{3}=\rchi$, the normalisation factor brings the thesis.

Consider now the case when $(\mathcal{A},H,D)$ is even and thus $(C_{c}(G,\mathcal{A}), \widehat{H}, \widehat{D}, \hat{\pi}_{2}\rtimes \hat{\Gamma}, \widehat{J})$ is odd. 
Then 
\begin{displaymath}
		[\widehat{D}, \hat{\pi}(a)] = [D,\pi(a)]\otimes 1, \qquad 	\hat{\Gamma}^{*}_{g}[\widehat{D}, \hat{\Gamma}_{g}]  = l(g)(\rchi\otimes 1),
\end{displaymath}
for any $a\in\mathcal{A}$ and $g\in G$. In particular, $	\hat{\Gamma}^{*}_{g}[\widehat{D}, \hat{\pi}(\alpha_{g}(a))]\hat{\Gamma}_{g} = [D,\pi(a)]\otimes 1 $. Since
\begin{displaymath}
	\rchi[D,\pi(a_{j})] = -[D,\pi(a_{j})]\rchi
\end{displaymath} 
we get
\begin{displaymath}
	\begin{split}
		\pi_{\widehat{D}}(c\rtimes_{\alpha} \Delta_{g}) &= l(g )\sum\pi(a_{0})\rchi J\pi(b_{0}^{*})J^{-1}[D,\pi(a_{1})]\cdots [D,\pi(a_{n})]\otimes\\
		& \qquad \qquad \otimes \left(1 + \sum_{j=2}^{n}(-1)^{j-1}(-1)^{j-1} + (-1)^{n}(-1)^{n}\right)\\
		&= l(g)(n+1)\rchi\underbrace{\left(\sum\pi(a_{0}) J\pi(b_{0}^{*})J^{-1}[D,\pi(a_{1})]\cdots [D,\pi(a_{n})]\right)}_{\rchi}\otimes 1
	\end{split}
\end{displaymath}
by the zeroth order condition for $\widehat{J}$. Since 
$\rchi^{2}=\textup{id}_{H}$ by assumption, the normalisation factor $M$ completes the proof.
\end{proof} 

\begin{rmk}
	The  method of the twisted shuffle product is not suitable for the case of the $\star$-equivariance: indeed, the shuffle product sums the degree of the Hochschild chains that are multiplied and it is not possible to pass from dimension $n$ to dimension $n-1$ (apart from multiplying by a hypothetical $7$-cycle whose existence is not guaranteed).  
\end{rmk}

\begin{rmk}Applying Theorem \ref{59} and formula \eqref{chat} to the triple in Example \ref{nctorus}, one recovers (up to a multiplicative constant) the standard orientation cycle on the noncommutative $2$-torus as described in \cite[Chapter 12.3]{gracia2013elements}. Indeed, if we regard $C(S^{1})$ as the $C^{*}$-algebra generated by $U=e^{2\pi i \varphi_{1}}$ with $\varphi_{1}\in S^{1}$, then $c = U^*\otimes U$ is a Hochschild orientation $1$-cycle for the spectral triple over $C(S^{1})$ as defined in Example \ref{nctorus}. Doing the shuffle product with the $1$-cocycle $\delta=V^{*}\otimes V$ (where $V$ is the generator of the action of $\mathbb{Z}$) we have by definition
	\begin{displaymath}
	\begin{split}
			c\rtimes \delta &= U^{*}V^{*}\otimes V\otimes U - U^{*}V^{*}\otimes \alpha(U)\otimes V \\
			&= U^{*}V^{*}\otimes V\otimes U - e^{2\pi i \theta}U^{*}V^{*}\otimes U\otimes V\\
			&= U^{*}V^{*}\otimes V\otimes U - V^{*}U^{*}\otimes U\otimes V
	\end{split}
	\end{displaymath}
	\demo

\end{rmk}

After having constructed an orientation cycle $\hat{c}$ on the equivariant triple $(C_{c}(G,\mathcal{A}),\widehat{H},\widehat{D})$,
we now examine its equivariance for the coaction of $G$ in a suitable sense. 
\begin{defn}
	Let $\delta\colon B \rightarrow B\otimes C^{*}_{r}(G)$ be a coaction of $G$ on a unital $C^{*}$-algebra $B$.
	For any Hochschild chain $c=\sum(b_{0}\otimes p)\otimes b_{1}\otimes \cdots\otimes b_{n}\in C_{n}(B\otimes B^{\textup{op}}, B)$, we define 
	\begin{equation}
		\delta(c) \coloneqq \sum(b_{0 (-1)}\otimes p)\otimes b_{1(-1)}\otimes \cdots\otimes b_{n(-1)}\otimes \left(b_{0(0)}\cdots b_{n(0)}\right)\in C_{n}(B\otimes B^{\textup{op}}, B)\otimes C^{*}_{r}(G)
	\end{equation}
using the Sweedler notation \eqref{51}. We say that $c$ is \emph{$\delta$-invariant} if $\delta(c)=c\otimes 1$. 
\end{defn}

 \begin{prop}
 Let $G$ be a discrete group and $l\colon G\rightarrow \mathbb{R}$ a proper homomorphism. Let $(\mathcal{A}, H, D,u)$ be an (even or odd) $G$-invariant spectral triple on a unital $C^{*}$-algebra $A$, $J$ a unitarily invariant real structure and $c$ a $G$-invariant orientation cycle. The orientation cycle $\hat{c}$ of $(C_{c}(G,\mathcal{A}), \widehat{H},\widehat{D})$ given in \eqref{chat} is invariant for the dual coaction 
 $\widehat{\alpha}$. 
\end{prop}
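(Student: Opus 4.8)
The plan is to unwind the definition of $\widehat{\alpha}$-invariance and to observe that, for the dual coaction, the only datum recorded in the trailing $C^{*}_{r}(G)$-leg of $\widehat{\alpha}(\hat{c})$ is the total group degree of each homogeneous summand. First I would recall from Example \ref{79} that for a homogeneous element $b=a\delta_{h}\in B=A\rtimes_{\alpha,r}G$ the dual coaction reads $b_{(-1)}=a\delta_{h}=b$ and $b_{(0)}=\delta_{h}$. Consequently, applying $\widehat{\alpha}$ to a homogeneous chain $(b_{0}\otimes p)\otimes b_{1}\otimes\cdots\otimes b_{m}$ leaves the $C_{m}(B\otimes B^{\textup{op}},B)$-part unchanged (since $b_{i(-1)}=b_{i}$) and contributes the single $C^{*}_{r}(G)$-factor
\begin{displaymath}
b_{0(0)}b_{1(0)}\cdots b_{m(0)}=\delta_{h_{0}}\delta_{h_{1}}\cdots\delta_{h_{m}}=\delta_{h_{0}h_{1}\cdots h_{m}},
\end{displaymath}
where $h_{i}$ is the group degree of $b_{i}$ and $p$ is not coacted upon. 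Hence $\widehat{\alpha}(\hat{c})=\hat{c}\otimes 1$ holds if and only if every summand of $\hat{c}$ has total group degree $e$.

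Next I would carry out this degree bookkeeping on the three families of summands of $c\rtimes_{\alpha}\Delta_{g}$ from Definition \ref{cad}, with $\Delta_{g}=(\delta_{g^{-1}}\otimes\delta_{e})\otimes\delta_{g}$. Because $c$ is an orientation cycle on $A$, each entry $a_{0},a_{1},\dots,a_{n}$ lies in $A=A\delta_{e}$ and so has group degree $e$; the same holds for every $\alpha_{g}(a_{i})$, as $\alpha_{g}$ is an automorphism of $A$. In each summand the coefficient leg is $a_{0}\delta_{g^{-1}}\otimes b_{0}$, of degree $g^{-1}$, the inserted factor $\delta_{g}$ has degree $g$, and every remaining chain entry has degree $e$. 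Thus the total group degree is $g^{-1}\cdot g=e$, regardless of the position at which $\delta_{g}$ is inserted, so $\delta_{h_{0}\cdots h_{n+1}}=\delta_{e}=1_{C^{*}_{r}(G)}$ for each summand. The overall normalisation $1/M$ in \eqref{chat} is a scalar and plays no role.

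Combining these two observations gives $\widehat{\alpha}(\hat{c})=\hat{c}\otimes 1$, which is precisely $\widehat{\alpha}$-invariance. I do not expect a genuine obstacle here: the whole content is that $\Delta_{g}$ was designed with the $\delta_{g^{-1}}$ in the left coefficient slot exactly balancing the $\delta_{g}$ inserted into the chain. The only point requiring a line of care is to note that the opposite-algebra entry $b_{0}$, which $\widehat{\alpha}$ by definition does not touch, in fact also sits in $A$ and so could not disturb the computation even if it were coacted.
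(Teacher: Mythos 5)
Your proposal is correct and follows essentially the same argument as the paper: both reduce the claim to the observation that the trailing $C^{*}_{r}(G)$-leg of $\widehat{\alpha}$ applied to each summand of $c\rtimes_{\alpha}\Delta_{g}$ is the product of the group degrees, which is $\delta_{g^{-1}}\delta_{e}\cdots\delta_{g}\cdots\delta_{e}=\delta_{e}$ since every $a_{i}$ sits in degree $e$ and the single $\delta_{g^{-1}}$ in the coefficient slot cancels the single inserted $\delta_{g}$. Your framing in terms of ``total group degree'' is just a slightly more systematic phrasing of the paper's remark that each $c_{j}$ contains exactly one $\delta_{g}$ and one $\delta_{g^{-1}}$.
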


\begin{proof}
Let us write $c\rtimes_{\alpha} \Delta_{g}=\sum_{j=1}^{n+1}c_{j}$ as a shorthand notation and recall that by definition $\widehat{\alpha}(a\delta_{g})=a\delta_{g}\otimes \delta_{g}$ for any 
$a\delta_{g}\in C_{c}(G,\mathcal{A})$. 
 Concerning $c_{1} = \sum(a_{0}\delta_{g^{-1}}\otimes  b_{0})\otimes \delta_{g} \otimes a_{1} \otimes \cdots \otimes a_{n}$ we have by definition 
$$
		\widehat{\alpha}(c_{1})  = \sum \left((a_{0}\delta_{g^{-1}}\otimes  b_{0})\otimes \delta_{g} \otimes a_{1} \otimes \cdots \otimes a_{n}\right)\otimes (\delta_{g^{-1}}\delta_{g}\delta_{e}\cdots \delta_{e})\\
= c_{1}\otimes \delta_{e}.
$$

The other cases are similar; any factor $a\delta_{e}\in C_{c}(G,\mathcal{A})$ of $c_{j}$ brings a trivial contribution to the piece in $C^{*}_{r}(G)$.
Since any $c_{j}$ contains precisely one term $\delta_{g}$ and one term $\delta_{g^{-1}}$, the total contribution is trivial. 
\end{proof}

\begin{rmk}
If $G$ is abelian, then the orientation cycle $\hat{c}$ given in \eqref{chat} is indeed invariant under the dual action $\widetilde{\alpha}$ of the Pontryagin group $\widehat{G}$ as in Definition \ref{83}. 
\end{rmk}

%---------------------------------------------------------------------------------------------
\appendix
\addtocontents{toc}{\protect\setcounter{tocdepth}{1}}

\section{Equivariant KK-Theory and The Kasparov Descent}\label{appA}

In this appendix we describe the KK-theory of algebras which carry an action of a locally compact topological group $G$ and give a survey of the most important properties. The theory is contained in \cite{kasparov1980operator, gg1988equivariant}; we refer also to \cite[\S 20]{blackadar1998k} and \cite{echterhoff2017bivariant} for more details.

\subsection{Equivariant KK-Theory}\indent 

Let $(B,G, \beta)$ be a $C^{*}$-dynamical system and $E$ a right Hilbert $B$-module. If the action $\beta$ is norm-continuous, we say that $B$ is a \emph{$G$-$C^{*}$-algebra}. An action of $G$ on $E$ is a homomorphism from $G$ into the space of bounded linear transformation on $E$ (not necessarily the space of module homomorphisms $\mathcal{L}_{B}(E)$) such that:
\begin{enumerate}
	\item it is continuous in the strong operator topology, that is the map 
	$g\mapsto \norm{\langle g\cdot x, g\cdot x\rangle_{B} }$ is continuous for every $x\in E$. 
	\item the $G$-action is compatible with the $B$-action in the sense that $g\cdot (xb) = (g\cdot x)\beta_{g}(b)$ for any $g\in G$, $x\in E$ and $b\in B$. 
\end{enumerate}
A Hilbert $B$-module with a (continuous) action $\gamma$ of $G$ is called an \emph{equivariant Hilbert $B$-module}. 
If $E_{1}$ and $E_{2}$ are Hilbert $B$-modules with a $G$-action, there is a natural induced action of $G$ on $\mathcal{L}(E_{1}, E_{2})$ given by conjugation. In general, this action $g\mapsto g\cdot T$ for $T\in \mathcal{L}(E_{1}, E_{2})$ is not norm-continuous: we say that $T$ is \emph{$G$-continuous} if it is. Obviously every $G$-equivariant map is $G$-continuous. In the case of a graded $C^{*}$-algebras and graded Hilbert modules, we require that the action of the group $G$ preserves the subspaces of homogeneous elements.

\begin{defn}
	Let $A$ and $B$ graded $G$-algebras. A \emph{$G$-equivariant Kasparov $A-B$-module} is a triple $(E,\phi,F)$ where $E$ is a countably generated Hilbert $B$-module with an action of $G$, $\phi\colon A\rightarrow \mathcal{L}_{B}(E)$ is an equivariant graded $*$-homomorphism and $F$ is an even $G$-continuous operator in $\mathcal{L}_{B}(E)$ such that
	\begin{equation}\label{cose}
		[F,\phi(a)], \, (F^{2} - 1)\phi(a), \, (F-F^{*})\phi(a)\,\,\, \textup{and}\,\,\, (g\cdot F - F)\phi(a)
	\end{equation}
	are compact operators on $E$ for all $a\in A$ and $g\in G$. 
\end{defn}

$KK^{G}(A,B)$ is the space of $G$-equivariant Kasparov $A-B$-module up to homotopy (as defined in the non equivariant case) and we define $KK_{G}^{n}(A,B)\coloneqq  KK_{G}(A,B\hat{\otimes}\mathbb{C}l_{n})$ where $\mathbb{C}l_{n}$ is the complex Clifford algebra.

Also in the equivariant case there exists an intersection product: given the $G$-algebras $A_{1}, A_{2},B_{1}, B_{2}$  and $D$, there exists a bilinear pairing
\begin{displaymath}
	\hat{\otimes}_{D} \colon KK^{G}_{m}(A_{1}, B_{1}\hat{\otimes}D)\times KK_{n}^{G}(D\hat{\otimes}A_{2}, B_{2})\longrightarrow KK^{G}_{n+m}(A_{1}\hat{\otimes} A_{2}, B_{1}\hat{\otimes}B_{2})
\end{displaymath}
which is associative and functorial in all possible senses. 

\subsection{The Kasparov Descent}\indent

The Kasparov descent map allows to relate the equivariant KK-theory of two algebras with the equivariant KK-theory of their crossed products. Let $(B,G,\beta)$ a $C^{*}$-dynamical system and $E$ a right Hilbert $B$-module. The algebra $C_{c}(G,B)$ acts on $C_{c}(G,E)$ by
	\begin{equation}\label{app1}
	( x\cdot f)(t) = \int_{G}x(s)\beta_{s}\left(f(s^{-1}t)\right)ds
\end{equation}
for $f\in C_{c}(G,B)$ and $x\in C_{c}(G,E)$. We define the crossed product $E\rtimes_{\beta} G$ as the right Hilbert $B\rtimes_{\beta}G$-module obtained by completing the right $C_{c}(G,B)$-module $C_{c}(G,E)$ with respect to the $C_{c}(G,B)$-valued scalar product 
\begin{equation}\label{app2}
	\langle x,y\rangle(t) = \int_{G} \beta_{s^{-1}}\left(\langle x(s), y(st)\rangle_{B}\right) ds, \qquad x,y\in C_{c}(G,E).
\end{equation}

Suppose now to have a covariant Kasparov module $(E,\phi, F)\in \mathbb{E}_{G}(A,B)$. Obviously, the action $\phi\colon A \rightarrow \mathcal{L}_{B}(E)$ induces a left action $\psi\colon A\rtimes_{\alpha} G \longrightarrow \mathcal{L}(E\rtimes_{\beta} G)$ by 
\begin{equation}\label{app3}
	\qquad (\psi(a)x)(t) = \int_{G} \phi(a(s)) [s\cdot x(s^{-1}t)]ds
\end{equation}
for $a\in C_{c}(G,A)$ and $x\in C_{c}(G,E)$. Endowed with this action and the operator $\widetilde{F}\in \mathcal{L}(E\rtimes_{\beta}G)$ defined by
\begin{displaymath}
	(\widetilde{F}x)(t) = F(x(t)) \qquad x\in C_{c}(G,E),
\end{displaymath}
one can show that $(E\rtimes_{\beta}G, \psi, \widetilde{F})$ becomes a $A\rtimes_{\alpha}G-B\rtimes_{\beta}G$ Kasparov module. 

\begin{thm}[Kasparov Descent \cite{kasparov1980operator}]	Let $(B,G,\beta)$ a $C^{*}$-dynamical system  and $(E,\phi, F)\in \mathbb{E}_{G}(A,B)$.  The map $J_{G}$ sending the equivariant Kasparov module $(E,\phi,F)$ to $(E\rtimes_{\beta}G, \psi, \widetilde{F})$ induces a homomorphism of groups
		\begin{displaymath}
			J_{G}\colon KK^{G}(A,B)\longrightarrow KK(A\rtimes_{\alpha}G ,B\rtimes_{\beta}G)
		\end{displaymath}which is functorial in $A$ and $B$ and is compatible with the intersection product. Furthermore, when $A=B$, the map $J_{G}$ is unital in the sense that $J_{G}(\mathbf{1}_{A}) = \mathbf{1}_{A\rtimes_{\alpha}G}$. 

\end{thm}

An analogous statement holds true also for reduced crossed products. Note that this construction works also for unbounded modules as it leaves the operator essentially untouched: an easy computation shows indeed that the Kasparov descent construction commutes with the bounded transform.

\color{black}

\bibliography{bibliography}{}

\begin{thebibliography}{10}

\bibitem{antonini2016bivariant}
Paolo Antonini, Sara Azzali, and Georges Skandalis.
\newblock Bivariant {K}-theory with $\mathbb{R}/\mathbb{Z}$-coefficients and
  rho classes of unitary representations.
\newblock {\em Journal of Functional Analysis}, 270(1):447--481, 2016.

\bibitem{austad21modulation}
Are Austad and Franz Luef.
\newblock Modulation spaces as a smooth structure in noncommutative geometry.
\newblock {\em Banach J. Math. Anal.}, 15(2):Paper No. 35, 30, 2021.

\bibitem{baaj1983theorie}
Saad Baaj and Pierre Julg.
\newblock Th{\'e}orie bivariante de {K}asparov et op{\'e}rateurs non born{\'e}s
  dans les {C}*-modules hilbertiens.
\newblock {\em CR Acad. Sci. Paris S{\'e}r. I Math}, 296(21):875--878, 1983.

\bibitem{baaj1989c}
Saad Baaj and Georges Skandalis.
\newblock {C}*-algebres de {H}opf et th{\'e}orie de {K}asparov
  {\'e}quivariante.
\newblock {\em K-theory}, 2(6):683--721, 1989.

\bibitem{bellissard2010dynamical}
Jean~V. Bellissard, Matilde Marcolli, and Kamran Reihani.
\newblock Dynamical systems on spectral metric spaces.
\newblock {\em preprint arXiv:1008.4617}, 2010.

\bibitem{bertozzini2006category}
Paolo Bertozzini, Roberto Conti, and Wicharn Lewkeeratiyutkul.
\newblock A category of spectral triples and discrete groups with length
  function.
\newblock {\em Osaka Journal of Mathematics}, 43(2):327--350, 2006.

\bibitem{bhowmick2011quantum}
Jyotishman Bhowmick, Francesco D'Andrea, and Ludwik D\k{a}browski.
\newblock Quantum isometries of the finite noncommutative geometry of the
  standard model.
\newblock {\em Commun. {M}ath. {P}hys.}, 307(1):101, 2011.

\bibitem{blackadar1998k}
Bruce Blackadar.
\newblock {\em K-theory for Operator Algebras}.
\newblock Cambridge University Press, 1998.
\newblock Second Edition.

\bibitem{boyle2014non}
Latham Boyle and Shane Farnsworth.
\newblock Non-commutative geometry, non-associative geometry and the standard
  model of particle physics.
\newblock {\em New Journal of Physics}, 16(12):123027, 2014.

\bibitem{connes1989compact}
Alain Connes.
\newblock Compact metric spaces, {F}redholm modules, and hyperfiniteness.
\newblock {\em Ergodic Theory and Dynamical Systems}, 9(2):207--220, 1989.

\bibitem{connes1994noncommutative}
Alain Connes.
\newblock Noncommutative {G}eometry.
\newblock {\em San Diego}, 1994.

\bibitem{connes1995noncommutative}
Alain Connes.
\newblock Noncommutative geometry and reality.
\newblock {\em J. {M}ath. {P}hys.}, 36(11):6194--6231, 1995.

\bibitem{connes1996gravity}
Alain Connes.
\newblock Gravity coupled with matter and the foundation of non-commutative
  geometry.
\newblock {\em Commun. {M}ath. {P}hys.}, 182(1):155--176, 1996.

\bibitem{connes2013spectral}
Alain Connes.
\newblock On the spectral characterization of manifolds.
\newblock {\em J. Noncommut. Geom.}, 7(1):1--82, 2013.

\bibitem{connes2019noncommutative}
Alain Connes and Matilde Marcolli.
\newblock {\em Noncommutative geometry, quantum fields and motives}.
\newblock AMS, 2008.

\bibitem{dkabrowski2011product}
Ludwik D\k{a}browski and Giacomo Dossena.
\newblock Product of real spectral triples.
\newblock {\em International {J}ournal of {G}eometric {M}ethods in {M}odern
  {P}hysics}, 8(08):1833--1848, 2011.

\bibitem{echterhoff2017bivariant}
Siegfried Echterhoff.
\newblock Bivariant {K}{K}-theory and the {B}aum--{C}onnes conjecure.
\newblock In {\em {K}-Theory for Group C*-Algebras and Semigroup C*-Algebras},
  pages 81--147. Springer, 2017.

\bibitem{gabriel2013ergodic}
Olivier Gabriel and Martin Grensing.
\newblock Ergodic actions and spectral triples.
\newblock {\em Journal of {O}perator {T}heory}, 76:307--335, 02 2016.

\bibitem{goffeng15Cuntz-Krieger}
Magnus Goffeng and Bram Mesland.
\newblock Spectral triples and finite summability on {C}untz-{K}rieger
  algebras.
\newblock {\em Doc. Math.}, 20:89--170, 2015.

\bibitem{mesland19untwisting}
Magnus Goffeng, Bram Mesland, and Adam Rennie.
\newblock Untwisting twisted spectral triples.
\newblock {\em Internat. J. Math.}, 30(14):1950076, 48, 2019.

\bibitem{gracia2013elements}
Jos{\'e}~M Gracia-Bond{\'\i}a, Joseph~C V{\'a}rilly, and H{\'e}ctor Figueroa.
\newblock {\em Elements of {N}oncommutative {G}eometry}.
\newblock Birkh\"auser, Boston, 2001.

\bibitem{hawkins2013spectral}
Andrew Hawkins, Adam Skalski, Stuart White, and Joachim Zacharias.
\newblock On spectral triples on crossed products arising from equicontinuous
  actions.
\newblock {\em Mathematica Scandinavica}, 113:262--291, 2 2013.

\bibitem{higson2000analytic}
Nigel Higson and John Roe.
\newblock {\em Analytic {K}-{H}omology}.
\newblock OUP Oxford, 2000.

\bibitem{julg1981k}
Pierre Julg.
\newblock {K}-th{\'e}orie {\'e}quivariante et produits crois{\'e}s.
\newblock {\em CR Acad. Sci. Paris S{\'e}r. I Math}, 292(13):629--632, 1981.

\bibitem{kaad2013spflow}
Jens Kaad and Matthias Lesch.
\newblock Spectral flow and the unbounded {K}asparov product.
\newblock {\em Adv. Math.}, 248:495--530, 2013.

\bibitem{gg1988equivariant}
Gennadi Kasparov.
\newblock Equivariant {KK}-theory and the {N}ovikov conjecture.
\newblock {\em Invent. Math}, 91:147--201, 1988.

\bibitem{kasparov1980operator}
Gennadii~G. Kasparov.
\newblock The operator {K}-functor and extensions of {C}*-algebras.
\newblock {\em Izvestiya Rossiiskoi Akademii Nauk. Seriya Matematicheskaya},
  44(3):571--636, 1980.

\bibitem{khalkhali2009basic}
Masoud Khalkhali.
\newblock {\em Basic {N}oncommutative {G}eometry}.
\newblock European {M}athematical {S}ociety, 2009.

\bibitem{krajewski1998classification}
Thomas Krajewski.
\newblock Classification of finite spectral triples.
\newblock {\em J. {G}eom. {P}hys.}, 28(1-2):1--30, 1998.

\bibitem{kucerovsky1997kk}
Dan Kucerovsky.
\newblock The kk-product of unbounded modules.
\newblock {\em K-theory}, 11(1):17--34, 1997.

\bibitem{loday2013cyclic}
Jean-Louis Loday.
\newblock {\em Cyclic {H}omology}.
\newblock Springer, Berlin Heidelberg, 2013.

\bibitem{Meslandgroupoid}
Bram Mesland.
\newblock Groupoid cocycles and {K}-theory.
\newblock {\em M\"{u}nster J. Math.}, 4:227--249, 2011.

\bibitem{mesland20Hecke}
Bram Mesland and Mehmet~Haluk \c{S}eng\"{u}n.
\newblock Hecke operators in {$KK$}-theory and the {$K$}-homology of {B}ianchi
  groups.
\newblock {\em J. Noncommut. Geom.}, 14(1):125--189, 2020.

\bibitem{paschke1998discrete}
Mario Paschke and Andrzej Sitarz.
\newblock Discrete spectral triples and their symmetries.
\newblock {\em J. {M}ath. {P}hys.}, 39(11):6191--6205, 1998.

\bibitem{pierrot2006bimodules}
Fran{\c{c}}ois Pierrot.
\newblock Bimodules de kasparov non born{\'e}s {\'e}quivariants pour les
  groupo{\"\i}des topologiques localement compacts.
\newblock {\em Comptes Rendus Mathematique}, 342(9):661--663, 2006.

\bibitem{pimsner1980exact}
Mihai Pimsner and Dan Voiculescu.
\newblock Exact sequences for {K}-groups and {E}xt-groups of certain
  cross-product {C}*-algebras.
\newblock {\em Journal of Operator Theory}, 4:93--118, 1980.

\bibitem{rennie17KMSstates}
Adam Rennie, David Robertson, and Aidan Sims.
\newblock The extension class and {KMS} states for {C}untz-{P}imsner algebras
  of some bi-{H}ilbertian bimodules.
\newblock {\em J. Topol. Anal.}, 9(2):297--327, 2017.

\bibitem{rieffel1981c}
Marc Rieffel.
\newblock {C}*-algebras associated with irrational rotations.
\newblock {\em Pacific Journal of Mathematics}, 93(2):415--429, 1981.

\bibitem{rieffel2002group}
Marc Rieffel.
\newblock Group {C}*-{A}lgebras as {C}ompact {Q}uantum {M}etric {S}paces.
\newblock {\em Doc. {M}ath.}, 7:605--651, 06 2002.

\bibitem{rieffel2004compact}
Marc~A Rieffel.
\newblock Compact quantum metric spaces.
\newblock {\em Contemporary Mathematics}, 365:315--330, 2004.

\bibitem{sitarz2003equivariant}
Andrzej Sitarz.
\newblock Equivariant {S}pectral {T}riples.
\newblock {\em Banach {C}enter {P}ublications}, 61:231--263, 2003.

\bibitem{timmermann2008invitation}
Thomas Timmermann.
\newblock {\em An {I}nvitation to {Q}uantum {G}roups and {D}uality: {F}rom
  {H}opf {A}lgebras to {M}ultiplicative {U}nitaries and {B}eyond}.
\newblock European Mathematical Society, 2008.

\bibitem{zucca2013dirac}
Alessandro Zucca.
\newblock Dirac {O}perators on {Q}uantum {P}rincipal {G}-{B}undles.
\newblock {\em PhD Thesis}, 2013.

\end{thebibliography}
\bibliographystyle{plain}

\vspace{0.5cm}

\end{document}